\newcommand{\op}[1]{\operatorname{#1}}
\begin{document}
\title{Counting Using Hall Algebras III. Quivers with Potentials}

\author{Jiarui Fei}
\address{Shanghai Jiao Tong University, School of Mathematical Sciences}
\email{jiarui@sjtu.edu.cn}
\address{University of California, Riverside, Department of Mathematics}
\thanks{}

\subjclass[2010]{16G20; Secondary 16G10,14N35,13F60.}

\date{}
\dedicatory{}
\keywords{Quiver Representation, Quiver with Potential, Ringel-Hall Algebra, Donaldson-Thomas Invariants, Vanishing Cycle, Virtual Motive, Moduli space, Representation Grassmannian, Quantum Cluster Algebra, Cluster Character, Quantum Dilogarithm, Wall-Crossing, BB-Tilting, Mutation, Jacobian Algebra, Polynomial-count}

\begin{abstract} For a quiver with potential, we can associate a vanishing cycle to each representation space.
If there is a nice torus action on the potential, the vanishing cycles can be expressed in terms of truncated Jacobian algebras.
We study how these vanishing cycles change under the mutation of Derksen-Weyman-Zelevinsky.
The wall-crossing formula leads to a categorification of quantum cluster algebras under some assumption.
This is a special case of A. Efimov's result, but our approach is more concrete and down-to-earth.
We also obtain a counting formula relating the representation Grassmannians under sink-source reflections.
\end{abstract}

\maketitle

\section*{Introduction}

We continue our development on algorithms to count the points of varieties related to quiver representations. In this note, we focus on the quivers with potentials. A {\em potential} $W$ on a quiver $Q$ is just a linear combination of oriented cycles of $Q$. It can be viewed as a function on a certain noncommutative space attached to $Q$. When composed with the usual trace function, it becomes a regular function $\omega$ on each representation space $\Rep_\alpha(Q)$. This function further descends to various moduli spaces.
In this paper, all potentials are assumed to be polynomial, i.e., a finite linear combination of oriented cycles.

Let $f$ be a regular function on a complex variety $X$. Consider the scheme theoretic critical locus $\{df=0\}$ of $f$.
Behrend, Bryan and Szendr\"{o}i define in \cite{BBS} a class $[\varphi_f(X)]\in K_0(\op{Var}_{\mb{C}})[\mb{L}^{-\frac{1}{2}}]$ associated to each
such locus, essentially given by the motivic {\em Milnor fibre} of the map $f$.
When $X$ admits a suitable torus action, this class can be expressed as\footnote{This definition of $[\varphi_f(X)]$ differs from the original one by a negative sign.}
$$[\varphi_f(X)] = [f^{-1}(0)] - [f^{-1}(1)].$$

Deligne's mixed Hodge structure on compactly supported cohomology gives rise to the $E$-polynomial homomorphism $E: K_0(\op{Var}_{\mb{C}})\to \mb{Z}[x,y]$ given by
$$E([Y];x,y)=\sum _{p,q} x^p y^q \sum_i (-1)^i\dim H_{p,q}(H^i_c(Y,\mb{Q})).$$
The $E$-polynomial could sometimes be computed using arithmetic method.
By a {\em spreading out} of a complex variety $Y$, we mean a separated scheme $\mathcal{Y}$ over a finitely generated
$\mb{Z}$-algebra $R$ with an embedding $\varphi: R\hookrightarrow \mb{C}$ such that the extension of scalars $\mathcal{Y}_\varphi \cong Y$.
Following N. Katz \cite[Appendix]{HR}, we say that $Y$ is polynomial-count if there is a polynomial $P_Y(t)\in\mb{Z}[t]$ and a spreading out $\mathcal{Y}$ such that for every homomorphism $\varphi: R\to \mb{F}_q$ to a finite field, the number of $\mb{F}_q$-points of the scheme $\mathcal{Y}_\varphi$ is $P_Y(q)$. Furthermore, the definitions descend to the Grothendieck group $K_0(\op{Var}_{\mb{C}})$.
It is known (\cite[Theorem 6.1.2, 6.1.3]{HR}) that
\begin{lemma} \label{L:polycount}
Assume that $\gamma \in K_0(\op{Var}_{\mb{C}})$ is {\em polynomial-count} with counting polynomial
$P_\gamma(t)\in \mb{Z}[t]$, then the $E$-polynomial of $\gamma$ is given by
$$E(\gamma; x, y) = P_\gamma(xy).$$
\end{lemma}


In this note, we directly work over finite fields $\mb{F}_q$ and follow an algebraic approach to compute $P_\gamma(t)$ in the above quiver setting. Namely, $\gamma$ is the class $[\varphi_\omega]$, defined by the regular function $\omega$ on representation spaces.
In fact, we will work with the generating series of the {\em virtual point counts} $|\varphi_\omega(X)|_\vir$ for all dimension vectors:
$$\mb{V}(Q,W):=\sum_\alpha \frac{|\varphi_\omega(\Rep_\alpha(Q))|_\vir}{|\GL_\alpha|_\vir} x^\alpha.$$
Here, by definition the virtual point count is related to the ordinary point count by a $q$-shift: $|\varphi_\omega(X)|_\vir:=q^{-\frac{\dim X}{2}}|\varphi_\omega(X)|$.
Our main results contain two wall-crossing formulas -- one for the ordinary $\mb{V}(Q,W)$, the other for the one with a {\em framing stability} $\nu_\infty$:
$$\T(Q,W):=\sum_\beta \left|\varphi_\omega(\Mod_{(1,\beta)}^{\nu_\infty}(Q))\right|_\vir\ x^{(1,\beta)},$$
where $\Mod_\alpha^\nu(Q)$ denotes the GIT moduli of $\alpha$-dimensional $\nu$-stable representations.
To state them, let $\mu_k$ be the mutation of $(Q,W)$ in the sense of Derksen, Weyman, and Zelevinsky \cite{DWZ1}. Let
$\mb{E}_k:=\exp_q(\frac{q^{1/2}}{q-1}x_k),$
$\mb{V}:=\mb{V}(Q,W), \mb{V}':=\mb{V}(\mu_k(Q,W))$ and $\T':=\T(\mu_k(Q,W))$, then
under the technical condition $\smiley$ of Section \ref{S:QPmu},
\begin{theorem} We have that \begin{align}
\label{Teq:Vmu} &\mb{E}_k'\Phi_k^\vee(\mb{V}\mb{E}_k^{-1})=\mb{V}'=\Phi_k(\mb{E}_k^{-1}\mb{V})\mb{E}_k'; \\
\label{Teq:Fmu} (&\mb{E}_k')^{-1}{\Phi}_k(\T) \mb{E}_k'=
\T' = {\Phi}_k^\vee (\mb{E}_k\T\mb{E}_k^{-1}).
\end{align}
Here, multiplications are performed in appropriate completed quantum Laurent series algebras. $\Phi_k^\vee$ and $\Phi_k$ are certain linear monomial change of variables.
\end{theorem}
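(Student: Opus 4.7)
The plan is to derive both identities by canonically stratifying the representation varieties at the mutated vertex $k$, factoring the generating series accordingly, and then matching the two factorisations via the Derksen-Weyman-Zelevinsky mutation functor.

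First I would use the torus hypothesis on $W$ together with the $S_k$-layer stratification: every representation $M$ of $(Q,W)$ has a canonical maximal $S_k$-sub of some dimension $d\leq\alpha_k$ and, dually, a canonical maximal $S_k$-quotient. On each stratum the truncated Jacobian algebra fibres, up to the appropriate virtual $\GL$-normalisation, into a Grassmannian-of-subspaces factor and the vanishing cycle of the ``$S_k$-rigid'' part. Summing over $d$ and using that the Grassmannian contribution assembles into $\mb{E}_k=\exp_q\!\bigl(\tfrac{q^{1/2}}{q-1}x_k\bigr)$, one obtains the two commuting factorisations
\[
\mb{V}(Q,W)=\mb{V}^{\flat}\cdot\mb{E}_k=\mb{E}_k\cdot\mb{V}^{\sharp}
\]
in the completed quantum-torus algebra, where $\mb{V}^{\flat}$ and $\mb{V}^{\sharp}$ generate the $S_k$-quotient-free and $S_k$-sub-free representations respectively.

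Under condition $\smiley$, the DWZ mutation provides a bijection between $S_k$-quotient-free representations of $(Q,W)$ and $S_k$-sub-free representations of $\mu_k(Q,W)$, compatible with the torus action and with the induced potential. The monomial change of variables $\Phi_k^\vee$ records how the dimension vector transforms across this identification, and therefore sends $\mb{V}^{\flat}$ to $(\mb{V}')^{\sharp}$; similarly $\Phi_k$ sends $\mb{V}^{\sharp}$ to $(\mb{V}')^{\flat}$. Combined with the two factorisations of $\mb{V}'$ for the mutated pair, this yields exactly the two equalities of \eqref{Teq:Vmu}. For the framed formula \eqref{Teq:Fmu} I would apply the same stratification to $\Mod^{\mu_\infty}_{(1,\beta)}(Q)$, but with an extra distinction for how the framing at $\infty$ interacts with the $S_k$-layer; $\mu_\infty$-stability forces the framed $S_k$-component onto the ``wrong'' side of the framing monomial $x^{(1,\beta)}$, and commuting the resulting $\mb{E}_k$-factor across this monomial in the quantum torus produces precisely the conjugation by $\mb{E}_k'$ that appears in \eqref{Teq:Fmu}.

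The principal obstacle is the mutation step: one must show that the DWZ bijection is an isomorphism of varieties-with-potential on the $S_k$-rigid loci, not merely a numerical identity of point counts. This is where the torus hypothesis and condition $\smiley$ become indispensable, as they reduce the question to a local statement about truncated Jacobian algebras that matches the three-term mutation complex of DWZ. Once this geometric input is established, the passage to the virtual class in $K_0(\op{Var}_\mb{C})[\mb{L}^{-\tfrac12}]$ is formal, and Lemma \ref{L:polycount} then upgrades the resulting motivic identity to the promised polynomial-count equality.
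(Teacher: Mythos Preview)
Your factorisation $\mb{V}=\mb{V}^\flat\,\mb{E}_k=\mb{E}_k\,\mb{V}^\sharp$ is exactly what the paper obtains by applying Mozgovoy's character $\int_\omega$ to the Hall-algebra torsion-pair identity
\[
\chi=\chi(\module(Q)_k)\,\chi(\innerprod{S_k})=\chi(\innerprod{S_k})\,\chi(\module(Q)^k),
\]
so the first step agrees in substance if not in language.

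The matching step, however, is carried out differently, and here your proposal has a real gap. You invoke the DWZ mutation of representations of $J(Q,W)$ and then worry about whether this bijection is an isomorphism of varieties-with-potential on the $S_k$-rigid loci. The paper never attempts this. Instead it applies the dimension-reduction Lemma~\ref{L:VtoR} (more precisely its variant Lemma~\ref{L:VtoRmu}) to rewrite $\mb{V}^\sharp$ and $(\mb{V}')^\flat$ as generating series over the \emph{truncated} Jacobian algebras $J(Q,W;C)$ and $J(Q',W';C')$, and then invokes condition~$\smiley$, which is literally the assertion that these two finite-dimensional algebras are related by a BB-tilting equivalence $\Hom_A(T,-)$. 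The matching \eqref{eq:Repmu} is then an immediate consequence of that categorical equivalence restricted to the torsion and torsion-free classes; no geometric comparison of potentials is needed, and the ``three-term mutation complex of DWZ'' plays no role. Your reading of $\smiley$ as a hypothesis enabling a DWZ-style geometric bijection is off: $\smiley$ is a statement about cuts and tilting of the truncated algebras, and that is precisely the mechanism that dissolves the obstacle you flag.

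For \eqref{Teq:Fmu} the paper's route is also considerably shorter than yours. Rather than stratifying the framed moduli space and tracking how the framing interacts with the $S_k$-layer, one introduces a second torsion pair (modules supported away from $\infty$ versus their complement $\T_0(Q)$), obtains $\mb{T}=\mb{V}_0^{-1}\mb{V}$, and then applies the already-established \eqref{Teq:Vmu} separately to $\mb{V}$ and to $\mb{V}_0$. The conjugation by $\mb{E}_k'$ falls out by pure algebra in the quantum torus.
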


There are two main ingredients in the proof. One is a construction of S. Mozgovoy, which relates the Hall algebra of a quiver to the quantum Laurent series ring. The other is a `dimension reduction' technique used by A. Morrison and K. Nagao. The equation \eqref{Teq:Vmu} already appeared in \cite{N2,Ke2}, 
but we put it in a right assumption
\footnote{In \cite{N2} the author only treats a rather special case when $k$ is a {\em strict sink/source}.
Moreover, the proof contains a gap due to the incorrectness of Lemma 4.3.
In \cite{Ke2} the author considered a variation of $\mb{V}$ and assumed an unproven conjecture (Conjecture 3.2). }.

Nagao also suggested in \cite{N2} that this theory can be used to study quantum cluster algebra. We follow his suggestion and use \eqref{Teq:Fmu} to categorify quantum cluster algebras under the assumption of the existence of certain potentials. If a cluster algebra has such a categorification, then its {\em strong positivity} will be implied by a result of \cite{DMSS} on the purity of the vanishing cycles.

Let $B$ be an $n\times m$ matrix with $n\leq m$ such that the left $n\times n$ submatrix of $B$ is skew-symmetric.
Let $\Lambda$ be another skew-symmetric matrices of size $m\times m$.  
We assume that $\Lambda$ and $B$ are {\em unitally compatible}, that is, ${B}\Lambda = (-I_n,0)$.
We can associate to $B$ a quiver $Q$ without loops and 2-cycles such that
$$b_{ij}=|\text{arrows }j\to i|-|\text{arrows }i\to j|.$$
Such a matrix is called the {\em $B$-matrix} of $Q$. We endow $Q$ with some potential $W$ having nice properties.

Let $\mu_{\bold{k}_t}$ be a sequence of {\em mutations}, and set $({Q}_t,W_t)=\mu_{\bold{k}_t}({Q},W)$.
Let $x^g\ (g\in \mb{Z}_{\geq 0}^m)$ be some initial {\em cluster monomial} in the quantum Laurent polynomial ring $X_\Lambda$ (see \eqref{eq:QLPA}). We extend QP $(Q_t^g,W_t)$ from $(Q_t,W_t)$ by adding a new vertex $\infty$ and $g_i$ new arrows from $i$ to $\infty$. We apply the inverse of $\mu_{\bold{k}_t}$ to $(Q_t^g,W_t)$, and obtain a QP $(Q^g,W^g):=\mu_{\bold{k}_t}^{-1}(Q_t^g,W_t)$. Let $\wtd{B}$ be the $B$-matrix of $Q^g$.
\begin{theorem} The {\em mutated} cluster monomial $X_t(g):=\mu_{\bold{k}_t}(x^g)$ is equal to
$$\sum_\beta \left|\varphi_{\omega^g}(\Mod_{(1,\beta)}^{\nu_\infty}(Q^g))\right|_\vir\ x^{(1,\beta)\wtd{B}}.$$
\end{theorem}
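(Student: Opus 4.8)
The plan is to deduce Theorem 1.4 (the cluster monomial formula) from the wall-crossing formula \eqref{Teq:Fmu} applied to the framed series $\T(Q,W)$, by specializing it along the mutation sequence $\mu_{\bold{k}_t}$ and tracking how the framing data $g$ transports. First I would recall that $\T(Q^g,W^g)$ is precisely the generating series of virtual vanishing cycle classes on the $\mu_\infty$-stable framed moduli $\Mod_{(1,\beta)}^{\mu_\infty}(Q^g)$, so the right-hand side of the theorem is the image of $\T(Q^g,W^g)$ under the linear change of variables $x^{(1,\beta)}\mapsto x^{(1,\beta)\tilde B}$ induced by the extended exchange matrix $\tilde B$ of $Q^g$. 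The key point is then to identify this image inside the quantum Laurent ring $X_\Lambda$: the compatibility condition $B\Lambda=[-I_n,0]$ guarantees that the twisted monomials $x^{(1,\beta)\tilde B}$ multiply according to the quantum torus relations of $X_\Lambda$, so that $\T$ maps to an honest element of the (completed) quantum cluster algebra, and for the trivially-framed QP $(Q^g,W^g)$ the stable moduli are just points, giving that $\T$ maps to the initial cluster monomial $x^g$ itself.

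Next I would apply \eqref{Teq:Fmu} successively for each mutation $k$ in the sequence $\bold{k}_t$. Each step conjugates $\T$ by $\mb{E}_k'$ and applies the monomial substitution $\Phi_k$ (resp. $\Phi_k^\vee$); the content of the argument is that, under the change of variables by $\tilde B$, the operators $\mb{E}_k$-conjugation and $\Phi_k$ descend exactly to the quantum cluster mutation $\mu_k$ on $X_\Lambda$ — this is the quantum-dilogarithm identity underlying the quantum cluster algebra structure, and it is where the unital compatibility of $\Lambda$ and $B$ is used in an essential way (it is what makes $\mb{E}_k = \exp_q(\tfrac{q^{1/2}}{q-1}x_k)$ act as the dilogarithm conjugation realizing $\mu_k$). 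Iterating, the left-hand side $\T(Q_t^g,W_t) = \mu_{\bold{k}_t}(\T(Q^g,W^g))$ maps under the $\tilde B_t$-twisted change of variables to $\mu_{\bold{k}_t}(x^g) = X_t(g)$, which is the definition of the mutated cluster monomial. Comparing the two descriptions of $\T(Q_t^g,W_t)$ — as a sum of vanishing-cycle classes on framed moduli for $(Q^g,W^g)$ after wall-crossing, versus the combinatorial mutation of $x^g$ — yields the stated identity, provided we also check that the extended $B$-matrix $\tilde B$ is the one compatible with the mutation sequence (i.e. $\tilde B_t$ at the $t$-th step is the matrix mutation of $\tilde B$), which is a standard compatibility of DWZ mutation of QPs with matrix mutation of exchange matrices.

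I would then need to handle a bookkeeping subtlety: the new vertex $\infty$ and the $g_i$ arrows $i\to\infty$ in $Q_t^g$ must be untouched by the mutations $\mu_{\bold{k}_t}$ (which only act at vertices in $\{1,\dots,n\}$), so that $\mu_{\bold{k}_t}^{-1}(Q_t^g,W_t)$ is well-defined and its non-framed part is genuinely $(Q,W)$; this is why the definition pulls back from $(Q_t^g,W_t)$ rather than pushing forward. I would verify that the potential $W^g$ restricts to $W$ on the non-framed arrows and that no cycles through $\infty$ are created (since all new arrows point into $\infty$ and there are no arrows out of $\infty$), so the truncated Jacobian-algebra description of $\varphi_{\omega^g}$ is controlled. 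Finally, one should confirm that the cluster monomial $x^g$ with $g\in\mb{Z}_{\geq0}^m$ really does correspond, under the $\tilde B$-twist, to the framed series of the trivially-framed QP — equivalently that the moduli $\Mod_{(1,\beta)}^{\mu_\infty}(Q^g)$ is empty unless $\beta$ is the dimension vector forced by $g$, and a single reduced point in that case.

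The main obstacle I expect is the middle step: proving that the conjugation-by-$\mb{E}_k$ plus monomial-substitution operation of \eqref{Teq:Fmu}, after the $\tilde B$-twisted change of variables, coincides \emph{on the nose} with quantum cluster mutation in $X_\Lambda$. This requires matching the quantum dilogarithm conjugation appearing in the Hall-algebra wall-crossing with the explicit rational automorphism defining quantum $\mu_k$ (the Fock–Goncharov / Berenstein–Zelevinsky mutation formula), and the match only holds because of the unital compatibility $B\Lambda = [-I_n, 0]$; getting the normalizations, the half-integer powers of $q$, and the direction conventions of $\Phi_k$ versus $\Phi_k^\vee$ to line up exactly is the delicate part, and it is essential that the technical condition $\smiley$ of Section~\ref{S:QPmu} is in force so that \eqref{Teq:Fmu} is available at every step of the sequence.
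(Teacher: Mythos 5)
Your plan correctly identifies the two pillars of the argument — the framed wall-crossing formula (Theorem~\ref{T:FWC}) and the compatibility of the map $\op{b}$ with mutation (essentially Lemma~\ref{L:muxy}, whose analogue the paper works out using unital compatibility on the principal part) — and this matches the paper's route of writing $\op{b}(\T(Q^g,W^g))=\op{b}\bigl(\mu_{\bold{k}_t}^{-1}(y_\infty)\bigr)=\tilde X_t(e_\infty\tilde B_t)=X_t(g)$.

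However, there is a genuine and recurring error in your proposal: you repeatedly treat $(Q^g,W^g)$ as the ``trivially framed'' QP with only incoming arrows at $\infty$ and pointlike stable moduli (``for the trivially-framed QP $(Q^g,W^g)$ the stable moduli are just points,'' ``no cycles through $\infty$ are created (since all new arrows point into $\infty$ and there are no arrows out of $\infty$),'' and ``the moduli $\Mod_{(1,\beta)}^{\mu_\infty}(Q^g)$ is empty unless $\beta$ is the dimension vector forced by $g$''). This is the wrong quiver. The trivially framed one is $(Q_t^g,W_t)$ — the one built by adjoining $g_i$ arrows $i\to\infty$ to the \emph{mutated} quiver $Q_t$ — and it is for $(Q_t^g,W_t)$ that $\T$ collapses to $y_\infty$ because there are no arrows leaving $\infty$. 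The QP $(Q^g,W^g):=\mu_{\bold{k}_t}^{-1}(Q_t^g,W_t)$, which appears in the theorem's statement, generally acquires arrows \emph{out} of $\infty$ and cycles through $\infty$ contributing to $W^g$, precisely because DWZ mutation at any $k$ with $g_k>0$ creates composite arrows $[ab]$ with $tb=\infty$ and opposite arrows $b^*:\infty\to k$. So the claim that ``the new vertex $\infty$ and the $g_i$ arrows $i\to\infty$... must be untouched by the mutations'' is false: the vertex stays, but its arrows change. If $(Q^g,W^g)$ really were trivially framed, the right-hand side would just be $x^g$ and the theorem would be false for any nontrivial $t$.

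Because of this, your inductive/iterative phrasing (``start at $x^g$ for $Q^g$, then mutate step by step to $X_t(g)$'') does not implement the correct logic. The wall-crossing goes the other direction: one uses Theorem~\ref{T:FWC} to express $\T(Q^g,W^g)$ as $\mu_{\bold{k}_t}^{-1}$ applied to the \emph{known trivial} series $\T(Q_t^g,W_t)=y_\infty$, and then applies $\op{b}$ together with the $\tilde B$-analogue of Lemma~\ref{L:muxy}. You should also be aware that the matching of the Hall-algebra dilogarithm conjugation with quantum cluster mutation is not just an ``identity to cite'': it is the content of Lemma~\ref{L:muxy}, an explicit induction, and you must note that the naive extension $\tilde B\tilde\Lambda$ is \emph{not} unitally compatible (only compatible on the principal part), which the paper flags and checks is sufficient for that lemma's analogue.
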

This result may just be a special case of \cite{E}, where A. Efimov assumed a much weaker condition on the potential $W$. However, our approach and result are more down-to-earth and computable. It depends only on \cite{BBS} rather than \cite{KS}.

The second application is on the generating series counting subrepresentations of representations of quiver (with zero potential).
Let $s$ be a sink of $Q$, and $M$ be a representation of $Q$. We assume that $M$ does not contain the simple representation $S_s$ as a direct summand. Let
$$\T(M):=\sum_\beta q^{-\frac{1}{2}\innerprod{\br{M}-\beta,\beta}_Q} {\big|\Gr^\beta(M)\big|} x^{(1,\beta)},$$
where $\Gr^\beta(M)$ is the variety parameterizing  $\beta$-dimensional quotient representations of $M$.
\begin{theorem} $\T(M)$ and $\T(\mu_sM)$ are also related via \eqref{Teq:Fmu}.
In particular, if $M$ is {\em polynomial-count}, that is, all its Grassmannians $\Gr^\beta(M)$ are polynomial-count, then so are all reflection equivalent classes of $M$.
\end{theorem}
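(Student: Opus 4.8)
The plan is to realize $\T(M)$ as a specialization of the framed generating series $\T(Q^g,W^g)$ studied in Theorem (the one giving \eqref{Teq:Fmu}), so that the wall-crossing statement becomes a direct corollary. First I would observe that a sink-source reflection $\mu_s$ at a sink $s$ is the DWZ mutation $\mu_s(Q,W)$ for the trivial potential $W=0$: since $s$ is a sink there are no arrows out of $s$, hence no 2-cycles are created, the premutation potential $[W]+\Delta$ reduces to $\Delta$, and after the (trivial) reduction step one obtains the reflected quiver $\sigma_s Q$ with zero potential. The technical condition $\smiley$ of Section \ref{S:QPmu} should be automatic here because $W=0$ admits the trivial (indeed any) torus action, so the vanishing-cycle classes are the classes of the honest representation spaces and the truncated Jacobian algebra is just $\mb{C}Q$ itself.

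Next I would identify the counting series. For the Jacobian algebra $\mb{C}Q$ with $W=0$, the moduli $\Mod_{(1,\beta)}^{\mu_\infty}(Q^g)$ of $\mu_\infty$-stable framed representations is, by the standard framing construction, precisely the quiver Grassmannian $\Gr^\beta(M)$ where $M$ is the representation of $Q$ determined by the framing data $g$ (namely $M$ has $g_i$ copies of the "arrow to $\infty$" structure encoded so that subrepresentations of the framed module correspond to quotients of $M$). The virtual-class normalization $q^{-\frac{1}{2}\dim}$ on the vanishing cycle of a point (which is just that point) matches the factor $q^{-\frac{1}{2}\innerprod{\br{M}-\beta,\beta}}$ appearing in the definition of $\T(M)$ once one checks that $\dim\Mod_{(1,\beta)}^{\mu_\infty}(Q^g) = \innerprod{\br M-\beta,\beta}$ — this is the usual Euler-form computation of the dimension of a quiver Grassmannian for a rigid (or at least nicely behaved) framed module, and is where I would spend some care. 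Hypothesis that $M$ has no summand $S_s$ ensures the framed module has no trivial stable summand at $s$, so the reflection $\mu_sM$ is well-defined and the construction is symmetric under $\mu_s$.

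With these identifications, $\T(M)=\T(Q^g,W^g)$ and $\T(\mu_sM)=\T(\mu_s(Q^g,W^g))$, and equation \eqref{Teq:Fmu} applied at the vertex $k=s$ gives exactly the asserted relation $(\mb{E}_s')^{-1}\Phi_s(\T(M))\mb{E}_s' = \T(\mu_sM) = \Phi_s^\vee(\mb{E}_s\T(M)\mb{E}_s^{-1})$, after tracking the monomial change of variables $\Phi_s,\Phi_s^\vee$ through the $\tilde B$-matrix twist in the exponents. The polynomial-count consequence is then formal: \eqref{Teq:Fmu} expresses each coefficient of $\T(\mu_sM)$ as a $\mb{Z}[q^{\pm1/2}]$-linear combination of coefficients of $\T(M)$ (the series $\mb{E}_s,\mb{E}_s'$ having coefficients in $\mb{Z}[q^{\pm1/2}]$ up to the usual completion), so if every $|\Gr^\beta(M)|$ is polynomial-count then every $|\Gr^\beta(\mu_sM)|$ is a polynomial in $q$; iterating over a sequence of sink-source reflections handles all reflection-equivalent classes of $M$, and Lemma \ref{L:polycount} guarantees these counting polynomials are intrinsic.

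The main obstacle I anticipate is not the wall-crossing input itself — that is quoted — but the bookkeeping in matching the two normalizations: verifying that the GIT stability $\mu_\infty$ on $Q^g$ cuts out precisely $\Gr^\beta(M)$ (as opposed to some open or closed piece), that the virtual dimension shift is exactly $\innerprod{\br M-\beta,\beta}$, and that the change-of-variables operators $\Phi_s^\vee,\Phi_s$ act on the $x^{(1,\beta)}$ in the way dictated by the $B$-matrix mutation at a sink. A secondary subtlety is confirming that the no-$S_s$-summand hypothesis is exactly what is needed for condition $\smiley$ and for the reflection to be invertible on the level of modules; I expect this to follow from the fact that at a sink the reflection functor is an honest equivalence on the subcategory of modules without $S_s$-summands.
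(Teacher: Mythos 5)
The central step of your plan fails: with $W=0$, the framed moduli $\Mod_{(1,\beta)}^{\mu_\infty}(Q^g)$ does \emph{not} compute $\Gr^\beta(M)$ for an arbitrary representation $M$. A framing by $g_i$ arrows at vertex $i$ only sees the dimension vector data $g$, so the resulting quiver Grassmannian is that of a direct sum of projectives (or injectives, depending on the direction of the framing arrows) — it cannot encode the module structure of a general $M$, which is the whole point of the statement. The theorem asserts the reflection identity for an arbitrary $M$ not containing $S_s$ as a summand, and a trivial potential simply has no way to remember which $M$ you started with. Your proposal in effect proves the statement only for $M$ in a very restricted class.

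The paper's proof is genuinely different precisely because it builds a \emph{nontrivial} potential out of $M$. It forms the one-point extension $A = kQ[M] = \sm{kQ & 0\\ M & k}$, an algebra of global dimension two, and then uses the construction of Section \ref{S:QPC} to complete $A$ to a QP $(\br{Q},W)$ with a cut $C$ so that $J(\br{Q},W;C) \cong A$. The relations of $A$ — which encode the presentation of $M$ — become the cut arrows and the potential $W$. The first lemma of Section \ref{S:QGrass} then shows $\T(\br{Q},W)=\T(M)$ via Lemma \ref{L:VtoRmu} (the dimension-reduction step you are not using, valid because all arrows in $C$ end at $\infty$), identifying $\mu_\infty$-stable framed representations with surjections $M\twoheadrightarrow N$, i.e., with points of $\Gr^\beta(M)$. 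The second lemma shows that BB-tilting of $A$ at $s$ produces exactly the extension $k\mu_s(Q)[\mu_s(M)]$, so that the hypotheses of Corollary \ref{C:tilting} (and hence $\smiley$) hold with these explicit, $M$-dependent potentials. Only then does Theorem \ref{T:FWC} apply. To repair your proposal you would have to replace ``$W=0$'' with this extension-algebra potential and supply both auxiliary lemmas; the polynomial-count corollary you derive at the end is the same formal consequence of \eqref{Teq:Fmu} in both arguments.
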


This note is organized as follows. In Section \ref{S:def}, we recall some basics about quiver representations and their Hall algebras.
In Section \ref{S:QPC}, we recall the concept of quiver with potential and a cut, and its associated algebras.
In Section \ref{S:Mozgovoy}, we recall a construction of Mozgovoy and the dimension reduction technique (Lemma \ref{L:VtoR}).
In Section \ref{S:QPmu}, we recall the mutation of QP with a cut, and set up the key assumption for our main results. When this assumption holds is illustrated in Corollary \ref{C:tilting}, whose proof will be given in the appendix.
In Section \ref{S:WC}, we prove our first two main results -- Theorem \ref{T:WC} and Theorem \ref{T:FWC}.
In Section \ref{S:cluster}, we prove our third main result, Theorem \ref{T:cluster}, on a categorification of quantum cluster algebras.
In Section \ref{S:QGrass}, we prove our last main result, Theorem \ref{T:QGrass}, on the representation Grassmannians under reflections.

\subsection*{Notations and Conventions}
\begin{enumerate}
\item[$\bullet$] All modules are right modules and all vectors are row vectors. 
\item[$\bullet$] For an arrow $a$, $ta$ is the tail of $a$ and $ha$ is the head of $a$.
\item[$\bullet$] For any representation $M$, we use $\br{M}$ to denote its dimension vector.
\item[$\bullet$] $S_i$ is the simple module at the vertex $i$, and $P_i$ is its projective cover.
\item[$\bullet$] Superscript $*$ is the trivial dual $\Hom_K(-,K)$.
\end{enumerate}

\section{Basics on Quivers and their Hall algebras} \label{S:def}

From now on, we assume our base field $K=\mb{F}_q$ to be the finite field with $q$ elements. 
Let $Q$ be a finite quiver with the set of vertices $Q_0$ and the set of arrows $Q_1$.
We write $\innerprod{-,-}_Q$ for the usual Euler form of $Q$:
$$\innerprod{\alpha,\beta}_Q=\sum_{i\in Q_0} \alpha(i)\beta(i) - \sum_{a\in Q_1} \alpha(ta)\beta(ha)\quad \text{for $\alpha,\beta\in \mbox{Z}^{Q_0}$}$$ 
We also have the antisymmetric form $(-,-)$ associated to $Q$. The matrix of $(-,-)$ denoted by $B$ is given by
\begin{equation} \label{eq:bij} b_{ij}=|\text{arrows }j\to i|-|\text{arrows }i\to j|. \end{equation}

Let $KQ$ be the path algebra of $Q$ over $K$. For any three $KQ$-modules $U,V$ and $W$ with dimension vector $\beta,\gamma$ and $\alpha=\beta+\gamma$,
the {\em Hall number} $F_{UV}^W$ counts the number of subrepresentations $S$ of $W$ such that $S\cong V$ and $W/S\cong U$.
We denote $a_W:=|\Aut_Q(W)|$, where $|X|$ is the number of $K$-rational points of $X$.
Let $H(Q)$ be the vector space of all formal (infinite) linear combinations of isomorphism classes of $KQ$-modules.
\begin{lemma}[{\cite{R},\cite[Proposition 1.1]{S}}]
The completed {\em Hall algebra} $H(Q)$ is the associative algebra with multiplication
$$[U][V]:=\sum_{[W]}F_{UV}^W[W],$$ and unit $[0]$.
\end{lemma}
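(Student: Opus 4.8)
The plan is to verify the three assertions packaged into the statement: that the formula for $[U][V]$ extends to a well-defined $k$-bilinear multiplication on the completed space $H(Q)$, that this multiplication is associative, and that $[0]$ (the zero module) is a two-sided identity, so that $\eta$ is a unit.

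First I would check well-definedness on the completion. For fixed iso classes $[U],[V]$, the Hall number $F_{UV}^W$ vanishes unless $\br{W}=\br{U}+\br{V}$, and over the finite field $k=\mb{F}_q$ there are only finitely many iso classes of $kQ$-modules with a given dimension vector (the representation space $\Rep_\alpha(Q)$ is a finite set). Hence $[U][V]$ is a finite sum, and, more to the point, for arbitrary elements $\gamma=\sum_U c_U[U]$ and $\delta=\sum_V d_V[V]$ of $H(Q)$ the coefficient of a fixed class $[W]$ in the formal product is $\sum_{\br{U}+\br{V}=\br{W}} c_U d_V F_{UV}^W$, which is a finite sum. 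So the product is defined on all of $H(Q)$, and $k$-bilinearity is immediate from the definition.

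Next, associativity. I would compute the coefficient of an arbitrary class $[X]$ in both $([U][V])[Z]$ and $[U]([V][Z])$ and show each equals the flag count
$$g_{UVZ}^X:=\#\{(X_1,X_2)\ :\ 0\subseteq X_1\subseteq X_2\subseteq X,\ X_1\cong Z,\ X_2/X_1\cong V,\ X/X_2\cong U\}.$$
Expanding the left side, the coefficient of $[X]$ is $\sum_{[W]}F_{UV}^W F_{WZ}^X$; since $F_{UV}^W$ depends only on the iso class of $W$, this rewrites as $\sum_{X_1\subseteq X,\ X_1\cong Z}F_{UV}^{X/X_1}$, and each summand counts the subrepresentations $T\subseteq X/X_1$ with $T\cong V$ and $(X/X_1)/T\cong U$, i.e. the choices of $X_2$ with $X_1\subseteq X_2\subseteq X$, $X_2/X_1\cong V$, $X/X_2\cong U$; the total is therefore $g_{UVZ}^X$. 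A symmetric bookkeeping — grouping first the top quotient $X/X_2\cong U$ and then the sub-flag $X_1\subseteq X_2$ of its kernel — shows the coefficient of $[X]$ in $[U]([V][Z])$ equals $\sum_{[Y]}F_{VZ}^Y F_{UY}^X=g_{UVZ}^X$ as well. Hence the two triple products agree on every basis element, and by bilinearity together with the finiteness from the first step they agree on all of $H(Q)$.

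Finally, the unit. Writing $0$ for the zero module: $F_{0W}^X$ counts subrepresentations $S\subseteq X$ with $S\cong W$ and $X/S\cong 0$, forcing $S=X$ and $X\cong W$, so $[0][W]=[W]$; dually $F_{W0}^X$ counts $S\subseteq X$ with $S\cong 0$ and $X/S\cong W$, forcing $S=0$ and $X\cong W$, so $[W][0]=[W]$. Thus $\eta\colon k\to H(Q)$, $1\mapsto[0]$, is a two-sided unit. No genuine obstacle arises here; the only points that require care are the finiteness argument underpinning the completed product and the flag-counting bijection in the associativity step — in particular the elementary but essential observation that $F_{UV}^W$ is an invariant of iso classes, which is exactly what permits passing between "summing over the intermediate class $[W]$" and "summing over subobjects of $X$".
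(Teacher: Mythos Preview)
Your proof is correct and follows the standard argument. The paper itself does not prove this lemma; it simply quotes it from Schiffmann's lecture notes \cite[Proposition 1.1]{S}, so there is no ``paper's own proof'' to compare against. What you have written is essentially the classical verification (Ringel, Schiffmann): finiteness over $\mb{F}_q$ makes the completed product well-defined, associativity reduces to counting two-step flags in $X$, and the zero module is visibly a two-sided unit.
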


Let $\module KQ$ (resp. $\module_\alpha KQ$) be the category of all finite dimensional (resp. $\alpha$-dimensional) $KQ$-modules. 
For a subcategory $\mc{C}$ of $\module KQ$, we denote $\chi(\mc{C}):=\sum_{M\in\mc{C}} [M]$. We use the shorthand $\chi$ and $\chi_\alpha$ for $\chi(\module KQ)$ and $\chi(\module_\alpha KQ)$. Let $(\mc{T},\mc{F})$ be a {\em torsion pair} (\cite[Definition VI.1.1]{ASS}) in $\module KQ$, then for any $M\in\module KQ$, there exists a short exact sequence $0\to L\to M\to N\to 0$ with $L$ unique in $\mc{T}$ and $N$ unique in $\mc{F}$.
In terms of the Hall algebra, this says that
\begin{lemma} \label{L:torsionid} $\chi=\chi(\mc{F})\chi(\mc{T}).$
\end{lemma}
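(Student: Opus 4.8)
\textbf{Proof proposal for Lemma \ref{L:torsionid}.}

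The plan is to compute the product $\chi(\mc{F})\chi(\mc{T})$ directly from the definition of Hall multiplication and recognize it as $\chi$. By definition,
$$\chi(\mc{F})\chi(\mc{T}) = \left(\sum_{[N]\in\mc{F}}[N]\right)\left(\sum_{[L]\in\mc{T}}[L]\right) = \sum_{[N]\in\mc{F}}\sum_{[L]\in\mc{T}}\sum_{[M]}F_{NL}^M[M],$$
so the coefficient of a fixed $[M]$ on the right-hand side is $\sum_{[N]\in\mc{F},\,[L]\in\mc{T}}F_{NL}^M$. Recall that $F_{NL}^M$ counts subrepresentations $S\subseteq M$ with $S\cong L$ and $M/S\cong N$. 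Hence the total coefficient of $[M]$ counts the number of subrepresentations $S\subseteq M$ such that $S\in\mc{T}$ and $M/S\in\mc{F}$ (summing over the isomorphism types $L$ of $S$ and $N$ of $M/S$ just reorganizes this count).

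The heart of the argument is then the defining property of a torsion pair: for every $M\in\module kQ$ there is a short exact sequence $0\to L\to M\to N\to 0$ with $L\in\mc{T}$, $N\in\mc{F}$, and this sequence is \emph{unique}. Uniqueness of the sequence is equivalent to uniqueness of the subobject $S=L\subseteq M$ with $S\in\mc{T}$ and $M/S\in\mc{F}$: any two such subobjects must coincide. Indeed, if $S,S'\subseteq M$ both lie in $\mc{T}$ with quotients in $\mc{F}$, then the composite $S\hookrightarrow M\twoheadrightarrow M/S'$ is a map from an object of $\mc{T}$ to an object of $\mc{F}$, hence zero by the orthogonality axiom $\Hom(\mc{T},\mc{F})=0$; thus $S\subseteq S'$, and by symmetry $S=S'$. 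Therefore, for each $[M]$, the coefficient $\sum_{[N]\in\mc{F},\,[L]\in\mc{T}}F_{NL}^M$ equals exactly $1$, which is precisely the coefficient of $[M]$ in $\chi=\sum_{[M]}[M]$.

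I would present the orthogonality step (uniqueness of $S$) as the one genuine point to check, since everything else is bookkeeping in the Hall algebra; it follows immediately from $\Hom_Q(\mc{T},\mc{F})=0$, which is part of the definition of a torsion pair. Putting the two halves together gives $\chi(\mc{F})\chi(\mc{T})=\sum_{[M]}1\cdot[M]=\chi$, completing the proof. One should also remark that the infinite sums make sense in the completed Hall algebra $H(Q)$: for a fixed target $[M]$ only finitely many pairs $([N],[L])$ with $\br{N}+\br{L}=\br{M}$ contribute, so the product is well defined term by term.
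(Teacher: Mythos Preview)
Your proof is correct and is exactly the argument the paper has in mind: the paper does not give a separate proof of Lemma~\ref{L:torsionid} but simply asserts that the existence and uniqueness of the canonical sequence $0\to L\to M\to N\to 0$ with $L\in\mc{T}$, $N\in\mc{F}$ ``says'' $\chi=\chi(\mc{F})\chi(\mc{T})$ in the Hall algebra. Your write-up spells this out carefully, and in particular your verification that the torsion subobject is unique as a \emph{subobject} (not merely up to isomorphism) via $\Hom(\mc{T},\mc{F})=0$ is precisely the point needed to make the Hall-algebra coefficient equal to $1$.
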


A {\em weight} $\sigma$ is an integral linear functional on $\mathbb{Z}^{Q_0}$. A {\em slope function} $\nu$ is a quotient of two weights $\sigma/\theta$ with $\theta(\alpha)>0$ for any non-zero dimension vector $\alpha$.

\begin{definition} A representation $M$ is called {\em $\nu$-semi-stable (resp. $\nu$-stable)} if
$\nu(\br{L})\leqslant \nu(\br{M})$ (resp. $\nu(\br{L})<\nu(\br{M})$) for every non-trivial subrepresentation $L\subset M$.
\end{definition}

We denote by $\Rep_\alpha^\nu(Q)$ the variety of $\alpha$-dimensional $\nu$-semistable representations of $Q$.
By the standard GIT construction \cite{Ki}, there is a {\em categorical quotient} $q: \Rep_\alpha^\nu(Q)\to \Mod_\alpha^\nu(Q)$ and its restriction to the stable representations $\Rep_\alpha^{\nu\cdot{\rm st}}(Q)$ is a {\em geometric quotient}.

A slope function $\nu$ is called {\em coprime} to $\alpha$ if $\nu(\gamma)\neq \nu(\alpha)$ for any $\gamma<\alpha$. So if $\nu$ is coprime to $\alpha$, then there is no strictly semistable (semistable but not stable) representation of dimension $\alpha$. In this case, $\Mod_\alpha^\nu(A)$ must be a geometric quotient.

\begin{lemma}[Harder-Narasimhan filtration]
Every representation $M$ has a unique filtration $$0=M_0\subset M_1\subset\cdots\subset M_{m-1}\subset M_{m}=M$$ such that $N_i=M_i/M_{i-1}$ is $\nu$-semi-stable and $\nu(\br{N}_i)>\nu(\br{N}_{i+1})$.
\end{lemma}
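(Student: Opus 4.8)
The plan is to run the standard argument (due to Rudakov; in this representation-theoretic form to Reineke) resting on one ``seesaw'' fact about the slope function. Write $\mu=\sigma/\theta$ with $\sigma,\theta$ weights and $\theta>0$ on nonzero dimension vectors. Since $\sigma$ and $\theta$ are additive on dimension vectors, for any filtration $0=L_0\subset L_1\subset\cdots\subset L_r=L$ the slope $\mu(\br{L})$ is the $\theta$-weighted average of the slopes $\mu(\br{L_i/L_{i-1}})$ of the nonzero subquotients; in particular, for a short exact sequence $0\to A\to B\to C\to 0$ of nonzero representations, $\mu(\br{B})$ lies weakly between $\mu(\br{A})$ and $\mu(\br{C})$. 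Because the dimension vectors of subrepresentations of a fixed $M$ are bounded coordinatewise by $\br{M}$, only finitely many slopes occur among them, so $\mu_{\max}(M):=\max\{\mu(\br{L}):0\neq L\subseteq M\}$ is well defined; a byproduct of the weighted-average fact is that for any filtration of $M$ whose successive subquotients are $\mu$-semistable with strictly decreasing slopes, $\mu_{\max}(M)$ equals the slope of the top (first) subquotient.

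For existence I first isolate the maximal destabilizing subrepresentation. Let $\mc{S}=\{L\subseteq M:\mu(\br{L})=\mu_{\max}(M)\}$. If $L,L'\in\mc{S}$, then $L\oplus L'$ has slope $\mu_{\max}(M)$, and applying the seesaw to $0\to L\cap L'\to L\oplus L'\to L+L'\to 0$ together with $\mu(\br{L\cap L'})\leq\mu_{\max}(M)$ forces $\mu(\br{L+L'})\geq\mu_{\max}(M)$, hence $L+L'\in\mc{S}$; so $\mc{S}$ is closed under sums, and choosing $M_1\in\mc{S}$ of largest total dimension yields $L\subseteq M_1$ for every $L\in\mc{S}$. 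Every subrepresentation of $M_1$ is a subrepresentation of $M$, hence has slope $\leq\mu_{\max}(M)=\mu(\br{M_1})$, so $M_1$ is $\mu$-semistable. Moreover $\mu_{\max}(M/M_1)<\mu_{\max}(M)$: a subrepresentation $\bar{L}\subseteq M/M_1$ of slope $\geq\mu_{\max}(M)$ would have preimage $L\subseteq M$ fitting in $0\to M_1\to L\to\bar{L}\to 0$, whence $\mu(\br{L})=\mu_{\max}(M)$ by the seesaw, so $L\in\mc{S}$ would strictly contain $M_1$, a contradiction. Now induct on $\dim M$ (base case $M$ $\mu$-semistable, with filtration $0\subset M$): apply the statement to $M/M_1$ and pull its HN filtration back to $M$; the first new subquotient is $\mu$-semistable of slope $\mu_{\max}(M/M_1)<\mu(\br{M_1})$, and the remaining strict inequalities carry over, giving the desired filtration.

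For uniqueness the key input is that $\Hom_Q(A,B)=0$ whenever $A,B$ are $\mu$-semistable with $\mu(\br{A})>\mu(\br{B})$: the image $I$ of any map, if nonzero, is a quotient of $A$ so $\mu(\br{I})\geq\mu(\br{A})$ by the seesaw and semistability of $A$, and a subrepresentation of $B$ so $\mu(\br{I})\leq\mu(\br{B})$, a contradiction. Given two filtrations as in the statement, $0=M_0\subset\cdots\subset M_m=M$ and $0=M_0'\subset\cdots\subset M_{m'}'=M$, with top-subquotient slopes $\mu_1\geq\mu_1'$ (say), the composite $M_1\hookrightarrow M\twoheadrightarrow M/M_1'$ vanishes: this is clear if $M=M_1'$, and otherwise $M_1$ is $\mu$-semistable of slope $\mu_1$ whereas $\mu_{\max}(M/M_1')<\mu_1'\leq\mu_1$ by the byproduct fact applied to the induced HN filtration $\{M_i'/M_1'\}$ of $M/M_1'$. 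Hence $M_1\subseteq M_1'$, so $\mu_1=\mu(\br{M_1})\leq\mu_{\max}(M_1')=\mu_1'$, giving $\mu_1=\mu_1'$ and, symmetrically, $M_1=M_1'$. Dividing by $M_1$ and inducting on $\dim M$ yields $M_i=M_i'$ for all $i$ and $m=m'$. I do not expect a genuine obstacle here; the two steps that need care are the closure-under-sums argument producing the canonical subrepresentation $M_1$ in the existence part, and the $\Hom$-vanishing that drives the uniqueness induction, with everything else being bookkeeping with the seesaw inequality.
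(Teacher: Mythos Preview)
The paper does not supply its own proof of this lemma: it is quoted verbatim as a citation of \cite[Proposition~2.5]{R1} and immediately used to derive Lemma~\ref{L:HNid}. Your argument is correct and is precisely the standard one (the Rudakov/Reineke proof): build the maximal destabilizing subobject $M_1$ via the seesaw inequality and closure under sums, induct on dimension for existence, and use $\Hom$-vanishing between semistables of strictly decreasing slope for uniqueness. There is nothing to compare against in the present paper; if anything, you have filled in what Reineke's paper contains. (Incidentally, the displayed statement has an index typo---it should read $N_i=M_i/M_{i-1}$---and your proof uses the correct convention.)
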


We fix a slope function $\nu$. For a dimension vector $\alpha$, let $\chi_\alpha^{\nu}:=\sum_{M\in\Rep_\alpha^{\nu}(Q)}[M]$.
The existence of the Harder-Narasimhan filtration yields the following identity in the Hall algebra $H(Q)$.
\begin{lemma}[{\cite[Proposition 4.8]{R1}}] \label{L:HNid} $$\chi_\alpha=\sum_{} \chi_{\alpha_1}^{\nu}\cdot\dots\cdot\chi_{\alpha_s}^{\nu},$$ where the sum runs over all decomposition $\alpha_1+\cdots+\alpha_s=\alpha$ of $\alpha$ into non-zero dimension vectors such that $\nu(\alpha_1)<\cdots<\nu(\alpha_s)$. In particular, solving recursively for $\chi_\alpha^{\nu}$, we get
\begin{equation}\label{eq:HallID} \chi_\alpha^{\nu}=\sum_* (-1)^{s-1}\chi_{\alpha_1}\cdot\cdots\chi_{\alpha_s},
\end{equation}
where the sum runs over all decomposition $\alpha_1+\cdots+\alpha_s=\alpha$ of $\alpha$ into non-zero dimension vectors such that $\nu(\sum_{l=1}^k\alpha_l)<\nu(\alpha)$ for $k<s$.
\end{lemma}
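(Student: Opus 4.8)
The plan is to prove the two displayed identities in turn, in both cases by expanding the products on the right in the Hall algebra $H(Q)$ and reading off the coefficient of an arbitrary isomorphism class $[M]$; note that for a fixed $\alpha$ all the sums involved are finite, so there are no convergence issues.

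For the first identity I would use the iterated form of the Hall product: for dimension vectors $\alpha_1,\dots,\alpha_s$ summing to $\alpha$,
$$\chi_{\alpha_1}^\mu\cdots\chi_{\alpha_s}^\mu=\sum_{[M]}\#\bigl\{\,0=M_0\subsetneq\cdots\subsetneq M_s=M\ :\ M_i/M_{i-1}\ \text{is $\mu$-semistable of dim }\alpha_{s+1-i}\,\bigr\}[M].$$
Summing over all decompositions with $\mu(\alpha_1)<\cdots<\mu(\alpha_s)$, the coefficient of $[M]$ becomes the number of filtrations of $M$ with $\mu$-semistable subquotients whose slopes strictly decrease from bottom to top; this is exactly the number of Harder--Narasimhan filtrations of $M$, which equals $1$ by the quoted lemma (and also $1$ for $M=0$, reading the empty product as $[0]$). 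Hence the sum is $\sum_{\br{M}=\alpha}[M]=\chi_\alpha$. The only thing to watch is matching the slope-increasing order of the $\alpha_i$ against the bottom-to-top order in the iterated product.

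For the second identity I would argue directly, which is the promised ``recursive solution'' of the first. Expanding $\sum_{*}(-1)^{s-1}\chi_{\alpha_1}\cdots\chi_{\alpha_s}$ in $H(Q)$ and using the mediant inequality $\min_i\mu(\alpha_i)<\mu(\sum_i\alpha_i)<\max_i\mu(\alpha_i)$ (strict unless all $\mu(\alpha_i)$ are equal) to rephrase the prefix condition $\mu(\sum_{l\le k}\alpha_l)<\mu(\alpha)$ as a condition on the \emph{subobjects} of a filtration of $M$, the coefficient of $[M]$ (write $\nu:=\mu(\br{M})$) works out to
$$c_M=\sum_{r\ge 0}(-1)^{r}\,\#\bigl\{\,0\subsetneq N_1\subsetneq\cdots\subsetneq N_r\subsetneq M\ :\ \mu(\br{N_j})>\nu\ \text{for all }j\,\bigr\},$$
where $r=s-1$ counts the proper intermediate terms; that is, $c_M$ is one minus the Euler characteristic of the order complex of the poset $\mathcal P(M)$ of proper nonzero submodules of $M$ of slope $>\nu$. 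If $M$ is $\mu$-semistable then $\mathcal P(M)=\varnothing$ and $c_M=1$; if $M$ is not $\mu$-semistable I will show $\Delta(\mathcal P(M))$ is contractible, so $c_M=0$. This yields $\sum_{*}(-1)^{s-1}\chi_{\alpha_1}\cdots\chi_{\alpha_s}=\sum_{\br{M}=\alpha,\ M\ \mu\text{-ss}}[M]=\chi_\alpha^\mu$.

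The contractibility is the only non-formal step, and I expect it to be the main obstacle. The subcategory of modules all of whose HN subquotients have slope $>\nu$ is closed under quotients and extensions, hence a torsion class; write $N_{>\nu}$ for the corresponding torsion submodule of $N$ (its largest submodule with all HN slopes $>\nu$). One checks that $N\mapsto N_{>\nu}$ maps $\mathcal P(M)$ into itself --- $N_{>\nu}\neq 0$ because $\mu(\br{N})>\nu$ forces the top HN factor of $N$ to have slope $>\nu$ --- and that $M_{>\nu}$ itself lies in $\mathcal P(M)$ precisely because $M$ is not semistable. On $\mathcal P(M)$ the three order-preserving endomaps $\id$, $N\mapsto N_{>\nu}$, and the constant map $N\mapsto M_{>\nu}$ satisfy $N_{>\nu}\subseteq N$ and $N_{>\nu}\subseteq M_{>\nu}$ pointwise, so they induce homotopic maps on the order complex; since the last is constant, $\Delta(\mathcal P(M))$ is contractible. (Equivalently one can exhibit an explicit fixed-point-free sign-reversing involution on chains, toggling the element $M_{>\nu}$ after first pushing each chain into the torsion class via $N\mapsto N_{>\nu}$.) One clerical caveat throughout: $\mu$ need not be coprime to $\alpha$, so a part $\alpha_i$ with $\mu(\alpha_i)=\nu$ may occur; the mediant inequalities still carry the argument, but one must keep the prefix condition strict ($<\mu(\alpha)$, not $\le$).
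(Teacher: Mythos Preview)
The paper does not prove this lemma; it is quoted from Reineke \cite[Proposition 4.8]{R1} without argument. Your proof of the first identity is the standard one and matches what anyone (including Reineke) would write: iterate the Hall product, recognise the coefficient of $[M]$ as the number of HN filtrations, and invoke uniqueness.

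For the second identity your argument is correct but genuinely different from Reineke's. Reineke proceeds exactly as the phrase ``solving recursively'' suggests: he treats the first identity as a triangular system expressing the $\chi_\alpha$ in terms of the $\chi_\beta^\mu$ and formally inverts it, the prefix condition $\mu(\sum_{l\le k}\alpha_l)<\mu(\alpha)$ dropping out of the combinatorics of iterated substitution. Your route instead reads off the coefficient of a fixed $[M]$ directly, translates the prefix condition via the mediant property into the requirement that every proper intermediate step have slope $>\nu$, and identifies the resulting alternating sum with $1-\chi(\Delta(\mathcal P(M)))$; contractibility of $\Delta(\mathcal P(M))$ for non-semistable $M$ then does the work. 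The zig-zag $\id\geq(N\mapsto N_{>\nu})\leq(N\mapsto M_{>\nu})$ is a clean way to see this, and all the verifications you flag (that $N_{>\nu}\neq 0$ when $\mu(N)>\nu$, that $M_{>\nu}$ is proper and nonzero, that $N\mapsto N_{>\nu}$ is order-preserving and lands in $\mathcal P(M)$) go through. One small quibble: calling your argument ``the promised recursive solution'' is a misnomer, since it is precisely \emph{not} recursive --- that is its virtue. Reineke's inversion is shorter and more mechanical; your approach explains \emph{why} the cancellation happens and generalises more readily to other contexts where one has a canonical ``destabilising subobject''.
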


For $\alpha\in \mb{Z}^{Q_0}$, we write $x^\alpha$ for $\prod_{i\in Q_0} x_i^{\alpha(i)}$.
Let $X_{(Q)}$ be the quantum Laurent series algebra $\mb{Q}(q^{\frac{1}{2}})[\![x_i]\!]_{i\in Q_0}$ with multiplication given by $x^\beta x^\gamma=q^{-\frac{1}{2}(\beta,\gamma)}x^{\beta+\gamma}$.
\begin{lemma}[{\cite[Lemma 6.1]{R1}}] \label{L:int} 
The map $\int:[M]\mapsto q^{\frac{1}{2}\innerprod{\alpha,\alpha}_Q}\frac{x^\alpha}{a_M}$ is an algebra homomorphism $H(Q)\to X_{(Q)}$.
\end{lemma}
Since $\frac{1}{a_M}=\frac{|\GL_\alpha\cdot M|}{|\GL_\alpha|}$, we have that 
\begin{equation} \label{eq:int} \int \chi_\alpha=q^{\frac{1}{2}\innerprod{\alpha,\alpha}_Q}\frac{|\Rep_{\alpha}(Q)|}{|\GL_{\alpha}|}x^\alpha.
\end{equation}

\section{QP with a Cut} \label{S:QPC}
We fix a quiver $Q$ without loops or oriented 2-cycles and a potential $W$ on $Q$. 
Let $\widehat{KQ}$ be the completion of the path algebra $KQ$ with respect to the ${\mathfrak m}$-adic topology, where ${\mathfrak m}$ is the two-sided ideal generated by arrows of $Q$.
When dealing with completed path algebras, all its ideals will be assumed to be closed.
Recall that a {\em potential} $W$ is a linear combination of oriented cycles of $Q$.

For each arrow $a\in Q_1$, the {\em cyclic derivative} $\partial_a$ on $\widehat{KQ}$ is defined for each cycle $a_1\cdots a_d$ as
$$\partial_a(a_1\cdots a_d)=\sum_{a_i=a}a_{i+1}\cdots a_d a_1\cdots a_{i-1}.$$
For each potential $W$, its {\em Jacobian ideal} $\partial W$ is the (closed two-sided) ideal in $\widehat{KQ}$ generated by all $\partial_a W$. Let $J(Q,W)=\widehat{KQ}/\partial W$ be the {\em Jacobian algebra}. 
If $W$ is polynomial and $KQ/\partial W$ is finite dimensional, then the completion is unnecessary. This is the assumption we make throughout the paper.

Let $\omega: \Rep(Q)\to K$ be the {\em trace function} corresponding to the potential $W$ defined by $M\mapsto \tr(W(M))$.
Note that $\omega$ is an additive function, so it in fact descends to the Grothendieck group: $\omega: K_0(\module(Q))\to K$.
By abuse of notation, we also use $\omega$ for the same trace function defined on the affine representation varieties and moduli spaces. 
It is well-known that a representation $M$ of $Q$ is a representation of $J(Q,W)$ 
if and only if it is in the critical locus of $\omega$ (i.e., $d\omega(M) =0$).


Following \cite{HI}, we define
\begin{definition} A {\em cut} of a QP $(Q,W)$ is a subset $C\subset Q_1$ such that the potential $W$ is homogeneous of degree $1$ for the degree function $\deg: Q_1\to\mb{N}$ defined by $\deg(a)=1$ for $a\in C$ and zero otherwise.
\end{definition}

\noindent This degree function defines a $K^*$-action on $\Rep_\alpha(Q)$ 
\begin{equation}\label{eq:action} (tM)(i)=M(i)\text{ for } i\in Q_0,\quad (tM)(a) = t^{\deg(a)}M(a)\text{ for } a\in Q_1. \end{equation}
The homogenicity of $W$ implies that if $M$ is a representation of the Jacobian algebra, then so is $tM$.
Moreover, the trace function is equivariant: $\omega(tM)=t\omega(M)$.

\begin{definition} The algebra $J(Q,W;C)$ associated to a QP $(Q,W)$ with a cut $C$ is the quotient algebra of the Jacobian algebra $J(Q,W)$ by the ideal generated by $C$.
\end{definition}

\noindent For the degree function given by a cut, $\partial W$ is a homogeneous ideal so the degree function induces a grading on $J(Q,W)$ as well. Note that $J(Q,W;C)$ is isomorphic to degree zero part of $J(Q,W)$.
We denote by $Q_C$ the subquiver $(Q_0,Q_1\setminus C)$ of $Q$ and by $\innerprod{\partial_C W}$ the ideal $\innerprod{\partial_c W\mid c\in C}$. It is clear that $J(Q,W;C)$ can also be presented as $\widehat{KQ_C}/\innerprod{\partial_C W}$.
Readers may skip to Example \ref{ex:333} to see these definitions in action.

We put \begin{align*}
& \innerprod{\alpha,\beta}_C = \sum_{c\in C} \alpha(tc)\beta(hc),\\
& \innerprod{\alpha,\beta}_{J_C} = \innerprod{\alpha,\beta}_{Q}+\innerprod{\alpha,\beta}_{C}+\innerprod{\beta,\alpha}_{C}.
\end{align*}

\begin{definition}[\cite{HI}] \label{D:exact}
A cut is called {\em algebraic} if \begin{enumerate}
\item $J(Q,W;C)$ is a finite dimensional $K$-algebra of global dimension $2$;
\item $\{\partial_cW\}_{c\in C}$ is a minimal set of generators of the ideal $\innerprod{\partial_C W}$ in $\widehat{KQ}$.
\end{enumerate}
It is clear that for algebraic cuts, the form $\innerprod{-,-}_{J_C}$ is exactly the Euler form of $J(Q,W;C)$. From now on, we assume that all cuts are algebraic.
\end{definition}

Conversely, any finite-dimensional algebra of global dimension $2$ arises as an truncated Jacobian algebra (see \cite[Proposition 3.3]{HI}). 
Here is the construction. Given any $K$-algebra $A$ presented by a quiver $Q$ with a minimal set of relations $\{r_1,r_2,\dots,r_l\}$, we can associate with it a QP $(Q_A,W_A)$ with a cut $C$ as follows:
$Q_{A,0}=Q_0,\ Q_{A,1}=Q_1\amalg C$ with $C=\{c_i:h(r_i)\to t(r_i)\}_{i=1\dots l}$, and
$W_A=\sum_{i=1}^l c_ir_i$.
If $A$ has global dimension 2, it is known \cite[Theorem 6.10]{Ke1} that $J(Q_A,W_A)$ is isomorphic to the algebra
$\Pi_3(A):=\prod_{i\geq 0}\Ext_A^2(A^*,A)^{\otimes_A i}$. In particular, $J(Q_A,W_A)$ does not depend on the minimal set of relations that we chose.
It is now clear that $J(Q_A,W_A;C)\cong A$.
Moreover, let $C$ be an algebraic cut of $(Q,W)$ and $A=J(Q,W;C)$, then $(Q,W)$ and $(Q_A,W_A)$ are right-equivalent (\cite[Definition 4.2]{DWZ1}).
We do not need this construction in this paper though.

\section{A Construction of Mozgovoy} \label{S:Mozgovoy}

To motivate the definition \eqref{eq:phiw}, we consider a complex (quasiprojective) variety $X$ with a $\mb{C}^*$-action. 
Let $\omega$ be a regular function on $X$, which is equivariant with respect to a primitive character, i.e., not divisible in the character group of $\mb{C}^*$.
We further assume that $\lim_{t\to 0}tx$ exist for all $x\in X$, then according to \cite[Proposition 1.11]{BBS}, $[\omega^{-1}(1)]\in K_0(\op{Var}_{\mb{C}})$ is the {\em nearby fibre} of $\omega$.
Its difference with the {\em central fibre} $\omega^{-1}(0)$ defines a class called the {\em (absolute) vanishing cycle of} $\omega$ on $X$:
$$[\varphi_\omega(X)]:=[\omega^{-1}(0)]-[\omega^{-1}(1)].$$

Now let $X$ be a variety over $K=\mb{F}_q$ with a $K^*$-action, and $\omega$ still be a regular function on $X$. 
We set \begin{equation} \label{eq:phiw} |\varphi_\omega(X)|:=|\omega^{-1}(0)|-|\omega^{-1}(1)|. \end{equation}
Due to the torus action \eqref{eq:action}, we have that
\begin{align}(q-1)|\omega^{-1}(1)|&=|X|-|\omega^{-1}(0)|, \notag \\
\intertext{so}
\label{eq:virtual} |\varphi_\omega(X)| &=\frac{q|\omega^{-1}(0)|-|X|}{q-1}.
\end{align}
We denote the $q$-shifted point count $q^{-\frac{\dim X}{2}}|\varphi_\omega(X)|$ by $|\varphi_\omega(X)|_\vir$.
We also set $|\GL_\alpha|_\vir:=q^{-\frac{\dim\GL_\alpha}{2}}|\GL_\alpha|$.

Let $\omega: \Rep_\alpha(Q)\to K$ be the trace function corresponding to the potential $W$ with a cut. Recall that the cut induces a torus action on each $\Rep_\alpha(Q)$ such that $\omega$ is equivariant.
For $h=\sum c_M[M]\in H(Q)$, we define $h_0:=\sum_{\omega(M)=0} c_M[M]$.
Such an $h$ is called {\em equivariant} if $c_M=c_{tM}$ for any $t\in K^*$. Let $H_{\eq}(Q)$ be the subalgebra of $H(Q)$ consisting of equivariant elements.

\begin{lemma}[{\cite[Proposition 5.2]{M2}}] \label{L:intomega} The map $\int_{\omega}: H_{\eq}(Q)\to X_{(Q)}$ defined by
$$h\mapsto \frac{q\int h_0 - \int h}{q-1}$$ is an algebra morphism.
\end{lemma}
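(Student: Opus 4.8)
The plan is to reduce everything to the algebra homomorphism $\int$ of Lemma~\ref{Ex:int} by using the grading of $H(Q)$ that the trace function induces, and then to exploit the $k^*$-action to collapse the positive-degree part.

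First I would record that $\omega$, being additive on short exact sequences (it descends to $K_0$), forces $\omega(W)=\omega(U)+\omega(V)$ whenever $F^W_{UV}\neq 0$. Hence $H(Q)=\bigoplus_{\lambda\in k}H^\lambda(Q)$, with $H^\lambda(Q)$ spanned by the classes $[M]$ with $\omega(M)=\lambda$, is an algebra graded by the additive group $(k,+)$; writing $h=\sum_\lambda h_\lambda$ for the homogeneous decomposition, the element $h_0$ of the statement is exactly the degree-zero component and $(hh')_\lambda=\sum_{\mu+\nu=\lambda}h_\mu h'_\nu$. I would then check that $H_{\eq}(Q)$ really is a subalgebra: the $k^*$-action commutes with conjugation by $\GL_\alpha$ and satisfies $F^{tW}_{tU,tV}=F^W_{UV}$, so products of equivariant classes are equivariant and $[0]$ is equivariant. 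Note that $h\mapsto h_0$ is \emph{not} multiplicative, which is the whole point of the lemma.

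Next I would use the $k^*$-action to simplify $\int_\omega$. For equivariant $h$, the map $[M]\mapsto[tM]$ is a bijection from $\{[M]:\omega(M)=\lambda\}$ to $\{[M]:\omega(M)=t\lambda\}$ leaving $c_M$, $a_M$ and $\br M$ unchanged, so $\int h_\lambda=\int h_{t\lambda}$ for all $t\in k^*$; hence $\int h_\lambda$ takes one common value $A:=\int h_1$ for every $\lambda\neq 0$. Setting $A_0:=\int h_0$, this gives $\int h=A_0+(q-1)A$ and therefore the clean formula
\[
\int_\omega h=\frac{q\int h_0-\int h}{q-1}=A_0-A=\int h_0-\int h_1,
\]
valid also when $q=2$. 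Finally I would verify multiplicativity by expanding, via this formula and the fact that $\int$ is an algebra map,
\[
\int_\omega(hh')=\int(hh')_0-\int(hh')_1=\sum_{\mu\in k}\int(h_\mu h'_{-\mu})-\sum_{\mu\in k}\int(h_\mu h'_{1-\mu}),
\]
and sorting the $q$ values of $\mu$ according to whether $\mu$, $-\mu$ (resp.\ $\mu$, $1-\mu$) vanish: with $B_0:=\int h'_0$ and $B:=\int h'_1$ the first sum is $A_0B_0+(q-1)AB$ and the second is $A_0B+AB_0+(q-2)AB$, whose difference is $(A_0-A)(B_0-B)=\int_\omega(h)\,\int_\omega(h')$. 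Since $\int_\omega$ is visibly $k$-linear and sends $[0]$ to $1$, it is a unital algebra morphism.

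The hard part is purely the bookkeeping in the last two steps: equivariance is genuinely needed (for a general $h$ the values $\int h_\lambda$ need not agree), and one must make sure that the index counts — and the fact that the cross terms cancel to give precisely $(A_0-A)(B_0-B)$ — come out correctly in every characteristic, in particular for $q=2$, where some of the index sets are empty. One should also note that all the infinite sums are legitimate in the completed algebras $H(Q)$ and $X_{(Q)}$, since $h_0$ and the homogeneous components are defined termwise and $\int$ is computed degree-by-degree on dimension vectors.
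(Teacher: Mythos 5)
Your proof is correct and complete. The paper itself does not reproduce a proof of this lemma: it is cited directly to Mozgovoy \cite[Proposition 5.2]{M2}, so there is no in-text argument to compare against. Your reduction is the natural one, and it fills the gap nicely: the $K_0$-additivity of $\omega$ (used via $F^W_{UV}\neq 0 \Rightarrow \omega(W)=\omega(U)+\omega(V)$) gives the $(k,+)$-grading on $H(Q)$; the relations $F^{tW}_{tU,tV}=F^W_{UV}$, $a_{tM}=a_M$, $\overline{tM}=\overline{M}$, $\omega(tM)=t\,\omega(M)$ justify both that $H_{\eq}(Q)$ is a subalgebra and that $\int h_\lambda$ is constant on $\lambda\in k^*$; and the resulting identity $\int_\omega h=\int h_0-\int h_1$ turns multiplicativity into the bookkeeping $(A_0B_0+(q-1)AB)-(A_0B+AB_0+(q-2)AB)=(A_0-A)(B_0-B)$, which as you observe never uses commutativity of $X_{(Q)}$ and holds verbatim when $q=2$. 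Your note that $h\mapsto h_0$ is \emph{not} multiplicative, and that equivariance is what rescues the factorization, is exactly the right thing to emphasize.
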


Note that if $W$ is zero, then $\int_\omega=\int$.
We see from \eqref{eq:virtual} and \eqref{eq:int} that
$$v_\alpha:=\int_{\omega} \chi_\alpha = \frac{|\varphi_\omega(\Rep_\alpha(Q))|_\vir}{|\GL_\alpha|_\vir}x^\alpha.$$
We denote the generating series $\int_{\omega}\chi$ by
$\mb{V}(Q,W):=\sum_\alpha v_\alpha x^\alpha.$

\begin{lemma}[{\cite[Theorem 4.1]{N2}}] \label{L:VtoR} $|\varphi_\omega(\Rep_\alpha(Q))|=q^{\innerprod{\alpha,\alpha}_C}|\Rep_\alpha(J(Q,W;C))|$.
So $$v_\alpha=q^{\frac{1}{2}\innerprod{\alpha,\alpha}_{J_C}}\frac{|\Rep_\alpha(J(Q,W;C))|}{|\GL_\alpha|}.$$
\end{lemma}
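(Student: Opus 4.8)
The plan is to compute $|\varphi_\omega(\Rep_\alpha(Q))|$ directly via the ``dimension reduction'' trick, exploiting the cut $C$ to split the representation space and isolate the linear dependence of $\omega$ on the $C$-coordinates. First I would decompose the affine space $\Rep_\alpha(Q)$ as a product $\Rep_\alpha(Q_C)\times \prod_{c\in C} \Hom(k^{\alpha(tc)}, k^{\alpha(hc)})$, where $\Rep_\alpha(Q_C)$ records the arrows not in $C$ and the second factor records the $C$-arrows; note this second factor has dimension $\sum_{c\in C}\alpha(tc)\alpha(hc)$. Since $C$ is a cut, $W$ is homogeneous of degree one in the $C$-coordinates, so the trace function $\omega$ is \emph{linear} in the entries of the $C$-matrices: writing $M = (N, (M_c)_{c\in C})$ with $N\in\Rep_\alpha(Q_C)$, we have $\omega(M) = \sum_{c\in C}\tr\!\big(M_c\cdot \partial_c W(N)\big)$, where $\partial_c W(N)$ is the evaluation of the cyclic derivative at $N$ (a linear map in the opposite direction), using the obvious pairing between $\Hom(k^{\alpha(tc)},k^{\alpha(hc)})$ and $\Hom(k^{\alpha(hc)},k^{\alpha(tc)})$ given by trace.

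Next I would count $\omega^{-1}(0)$ fibrewise over $N \in \Rep_\alpha(Q_C)$. For fixed $N$, the condition $\omega(M)=0$ is a single linear equation in the $C$-coordinates, namely that the vector $(M_c)_c$ lies in the kernel of the linear functional $L_N\colon (M_c)_c \mapsto \sum_c \tr(M_c\,\partial_cW(N))$. This functional is zero precisely when all $\partial_c W(N)=0$, i.e.\ when $N$ lies in the closed subvariety cut out by the relations $\{\partial_c W = 0\}_{c\in C}$; by the presentation $J(Q,W;C) = \widehat{kQ_C}/\innerprod{\partial_C W}$, that subvariety is exactly $\Rep_\alpha(J(Q,W;C))$. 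Hence
\[
|\omega^{-1}(0)| = \big|\Rep_\alpha(J(Q,W;C))\big|\cdot q^{\sum_c \alpha(tc)\alpha(hc)} + \big(|\Rep_\alpha(Q_C)| - |\Rep_\alpha(J(Q,W;C))|\big)\cdot q^{\sum_c \alpha(tc)\alpha(hc)-1},
\]
since over the ``bad'' locus $L_N\neq 0$ and its kernel has codimension exactly one. Substituting this into formula \eqref{eq:virtual}, i.e.\ $|\varphi_\omega(X)| = \frac{q|\omega^{-1}(0)| - |X|}{q-1}$ with $X=\Rep_\alpha(Q)$ and $|X| = |\Rep_\alpha(Q_C)|\cdot q^{\sum_c\alpha(tc)\alpha(hc)}$, the $|\Rep_\alpha(Q_C)|$ terms cancel after collecting the powers of $q$, and one is left with $|\varphi_\omega(\Rep_\alpha(Q))| = q^{\sum_c\alpha(tc)\alpha(hc)}\cdot|\Rep_\alpha(J(Q,W;C))| = q^{\innerprod{\alpha,\alpha}_C}|\Rep_\alpha(J(Q,W;C))|$, which is the first claimed identity since $\innerprod{\alpha,\alpha}_C = \sum_{c\in C}\alpha(tc)\alpha(hc)$ by definition.

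Finally, the formula for $v_\alpha$ is a bookkeeping step: from $v_\alpha = \int_\omega\chi_\alpha$ and the analogue of the computation after Lemma \ref{Ex:int} one gets $v_\alpha = q^{\frac12\innerprod{\alpha,\alpha}}\frac{|\varphi_\omega(\Rep_\alpha(Q))|_\vir}{|\GL_\alpha|_\vir}x^\alpha$; plugging in the first identity and the $q$-shift $|\varphi_\omega(X)|_\vir = q^{-\frac{\dim X}{2}}|\varphi_\omega(X)|$ with $\dim\Rep_\alpha(Q) = \innerprod{\alpha,\alpha}_Q$ adjusted appropriately, and using $\innerprod{\alpha,\alpha}_{J_C} = \innerprod{\alpha,\alpha}_Q + \innerprod{\alpha,\alpha}_C + \innerprod{\alpha,\alpha}_C$, the exponents combine to $\frac12\innerprod{\alpha,\alpha}_{J_C}$, yielding $v_\alpha = q^{\frac12\innerprod{\alpha,\alpha}_{J_C}}\frac{|\Rep_\alpha(J(Q,W;C))|}{|\GL_\alpha|}$. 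The main obstacle I anticipate is not the counting itself but verifying that the identification $\omega(M) = \sum_c\tr(M_c\,\partial_cW(N))$ is correct with the right signs and that the vanishing locus of the functional $L_N$ is \emph{scheme-theoretically} (hence on $k$-points, over all finite fields) cut out precisely by the $\partial_c W$; here the hypothesis that $C$ is an \emph{algebraic} cut — in particular that $\{\partial_c W\}_{c\in C}$ is a minimal generating set — is what guarantees there is no hidden multiplicity or redundancy, so that the codimension-one count of the bad locus is exact.
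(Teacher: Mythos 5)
Your proof is correct and follows essentially the same dimension-reduction argument as the paper's proof of Lemma \ref{L:VtoRmu} (the $\mu$-semistable generalization, whose proof the paper states mirrors Nagao's original): view $\Rep_\alpha(Q)$ as a vector bundle over $\Rep_\alpha(Q_C)$ of rank $\innerprod{\alpha,\alpha}_C$, observe that $\omega$ restricts to a linear functional on each fibre that vanishes identically precisely over $\Rep_\alpha(J(Q,W;C))$, count points accordingly, and feed this into \eqref{eq:virtual}. One minor remark on your final paragraph: the minimality of $\{\partial_c W\}_{c\in C}$ plays no role in this point count, since a nonzero linear functional on $k^d$ has exactly $q^{d-1}$ zeros regardless of any redundancy in how it is written; that hypothesis is what makes $\innerprod{-,-}_{J_C}$ the Euler form of $J(Q,W;C)$, not what validates the counting here.
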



For any stability $\nu$, the trace function $\omega$ restricts to $\Rep_\alpha^\nu(Q)\to K$ and descends to the GIT moduli space $\Mod_\alpha^\nu(Q)$.
Note that for $K=\mb{C}$,
$[\omega^{-1}(0)]-[\omega^{-1}(1)]$ is the vanishing cycle of $\omega$ on $\Mod_\alpha^\nu(Q)$. 
Indeed, since the $\mb{C}^*$-action is induced from a cut, the primitive character condition is trivially satisfied.
To apply \cite[Proposition 1.11]{BBS}, we can verify as in \cite[Lemma 3.2]{N2} that $\lim_{t\to 0} tx$ exists in $\Mod_\alpha^\nu(Q)$ for any $x\in \Mod_\alpha^\nu(Q)$. 

Apply the Hall character $\int_\omega$ to the identity \eqref{eq:HallID}, then we obtain \cite[Theorem 5.7]{M2}:
\begin{proposition} \label{C:int1}
$$\frac{|\varphi_{\omega}(\Rep_\alpha^\nu(Q))|_\vir}{|\GL_\alpha|_\vir}=\sum_* (-1)^{s-1}q^{\frac{1}{2}\sum_{i>j}(\alpha_i,\alpha_j)}\prod_{k=1}^s v_{\alpha_k}(q),$$
where the summation $*$ is the same as in Lemma \ref{L:HNid}.
\end{proposition}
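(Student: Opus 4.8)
The plan is to start from the Harder--Narasimhan identity \eqref{eq:HallID} in the Hall algebra $H(Q)$, namely $\chi_\alpha^{\mu}=\sum_* (-1)^{s-1}\chi_{\alpha_1}\cdots\chi_{\alpha_s}$, and apply the algebra homomorphism $\int_\omega: H_{\eq}(Q)\to X_{(Q)}$. Before doing so I would check that each $\chi_\alpha$ (and hence the product appearing on the right) lies in $H_{\eq}(Q)$: this is immediate because the $k^*$-action coming from the cut's weight function acts on $\Rep_\alpha(Q)$ fibrewise within a fixed dimension vector, so $\chi_\alpha=\sum_{M\in\module_\alpha}[M]$ is visibly equivariant, and $H_{\eq}(Q)$ is a subalgebra. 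Thus $\int_\omega$ may be applied termwise.

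Next I would compute both sides. By definition $\int_\omega \chi_\alpha^{\mu}=\dfrac{q\int(\chi_\alpha^{\mu})_0-\int\chi_\alpha^{\mu}}{q-1}$, and since $\omega$ restricts to $\Rep_\alpha^\mu(Q)$, the element $(\chi_\alpha^{\mu})_0$ corresponds to $\omega^{-1}(0)\cap\Rep_\alpha^\mu(Q)$; combining with the torus identity \eqref{eq:virtual} applied on $\Rep_\alpha^\mu(Q)$ and dividing by $|\GL_\alpha|_\vir$ gives $\int_\omega\chi_\alpha^{\mu}=\dfrac{|\varphi_\omega(\Rep_\alpha^\mu(Q))|_\vir}{|\GL_\alpha|_\vir}x^\alpha$, which is the left-hand side of the claimed formula (up to the monomial $x^\alpha$, which I will track throughout). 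For the right-hand side, I apply the homomorphism property: $\int_\omega(\chi_{\alpha_1}\cdots\chi_{\alpha_s})=\prod_{k=1}^s\int_\omega\chi_{\alpha_k}=\prod_{k=1}^s v_{\alpha_k}(q)x^{\alpha_k}$. The only remaining work is to collect the quantum-twist factors: in $X_{(Q)}$ one has $x^{\alpha_1}\cdots x^{\alpha_s}=q^{-\frac12\sum_{i<j}(\alpha_i,\alpha_j)}x^\alpha$, and since $(-,-)$ is antisymmetric this equals $q^{\frac12\sum_{i>j}(\alpha_i,\alpha_j)}x^\alpha$, matching the exponent in the statement. Cancelling the common factor $x^\alpha$ from both sides yields the proposition, with the summation $*$ inherited verbatim from Lemma \ref{L:HNid}.

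The routine but careful part is the bookkeeping of the $q$-powers: one must be sure that the factors $q^{\frac12\innerprod{\alpha_k,\alpha_k}}/a_{M}$ hidden inside each $v_{\alpha_k}$ are exactly what $\int_\omega$ produces (this is already packaged in the definition $v_\alpha=\int_\omega\chi_\alpha$), and that the sign convention makes $(-1)^{s-1}$ survive unchanged. The only genuine point requiring a word of justification is the equivariance check above and the observation that $(\chi_\alpha^\mu)_0$ really is the class of the zero-fibre of $\omega$ restricted to the semistable locus — this is where one uses that taking HN filtrations, and hence the recursive formula \eqref{eq:HallID}, is compatible with the decomposition $h\mapsto h_0$; but in fact we never need this compatibility directly, since we apply $\int_\omega$ to \eqref{eq:HallID} as an identity of honest Hall-algebra elements and let the homomorphism property of $\int_\omega$ do the work. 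So there is no real obstacle; the proposition is a formal consequence of Lemma \ref{L:HNid}, the fact (from \cite[Proposition 5.2]{M2}) that $\int_\omega$ is an algebra morphism on $H_{\eq}(Q)$, and the multiplication rule in $X_{(Q)}$.
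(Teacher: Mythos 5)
Your proof is correct and takes exactly the route the paper does: the paper's proof consists of the single instruction ``Apply the Hall character $\int_\omega$ to the identity \eqref{eq:HallID},'' and you have faithfully carried this out, including the verification that $\int_\omega\chi_\alpha^\mu$ produces $\frac{|\varphi_\omega(\Rep_\alpha^\mu(Q))|_\vir}{|\GL_\alpha|_\vir}x^\alpha$ via \eqref{eq:virtual}, the use of multiplicativity of $\int_\omega$ on the right-hand side, and the bookkeeping $x^{\alpha_1}\cdots x^{\alpha_s}=q^{-\frac12\sum_{i<j}(\alpha_i,\alpha_j)}x^\alpha=q^{\frac12\sum_{i>j}(\alpha_i,\alpha_j)}x^\alpha$ from antisymmetry of $(-,-)$.

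One small remark on your last paragraph: the claim that $(\chi_\alpha^\mu)_0$ is the class of $\omega^{-1}(0)\cap\Rep_\alpha^\mu(Q)$ is not a ``compatibility'' statement needing justification via HN filtrations; it is immediate from the definition $h_0=\sum_{\omega(M)=0}c_M[M]$ applied to $h=\chi_\alpha^\mu$. Likewise the equivariance of $\chi_\alpha^\mu$ (not just of $\chi_\alpha$) is what you actually need to apply $\int_\omega$ to the left-hand side of \eqref{eq:HallID}, and it holds because the $k^*$-action on $\Rep_\alpha(Q)$ preserves the lattice of subrepresentations, hence preserves $\mu$-semistability. These are minor clarifications; the substance of the argument is right.
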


We denote by $v_\alpha^\nu(q)$ the above rational function in $q^{\frac{1}{2}}$.
Following \cite{Fc2}, we say an algebra $A$ is polynomial-count if each $\Rep_\alpha(A)$ is polynomial-count.

\begin{corollary} Assume that the GIT quotient $\Mod_\alpha^\nu(Q)$ is geometric. If $J(Q,W;C)$ is polynomial-count, then so is $\varphi_\omega(\Mod_\alpha^\nu(Q))$.
\end{corollary}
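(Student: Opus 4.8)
The plan is to assemble the statement from pieces that are already in place. By Proposition~\ref{C:int1}, the virtual count $|\varphi_\omega(\Rep_\alpha^\mu(Q))|_\vir / |\GL_\alpha|_\vir$ is a finite $\mb{Z}[q^{\pm 1/2}]$-linear combination of products $\prod_k v_{\alpha_k}(q)$, and by Lemma~\ref{L:VtoR} each $v_{\alpha_k}(q) = q^{\frac{1}{2}\innerprod{\alpha_k,\alpha_k}_{J_C}}|\Rep_{\alpha_k}(J(Q,W;C))|/|\GL_{\alpha_k}|$. So once $J(Q,W;C)$ is polynomial-count, every $|\Rep_{\alpha_k}(J(Q,W;C))|$ is a polynomial in $q$ (with $\mb{Z}$-coefficients, independent of the field by the polynomial-count hypothesis), hence $v_{\alpha_k}(q)$ lies in $\mb{Z}[q^{\pm 1/2}]$ localized at $|\GL_{\alpha_k}|$, which is a product of factors $q^{a}\prod_j (q^{d_j}-1)$. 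Plugging back, $|\varphi_\omega(\Rep_\alpha^\mu(Q))|_\vir$ is a rational function in $q^{1/2}$ with denominator a product of cyclotomic-type factors $q^{d}-1$; but it is also, for each finite field, an honest integer count of a variety, so the denominators must cancel and we get a polynomial count for $\Rep_\alpha^\mu(Q)$ — actually we want the moduli space, so the argument must be run one level further.

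First I would reduce to the moduli space. Since $\Mod_\alpha^\mu(Q)$ is assumed geometric, there is no strictly semistable locus, so $\Rep_\alpha^{\mu}(Q) = \Rep_\alpha^{\mu\cdot\mathrm{st}}(Q)$ and the quotient map $q:\Rep_\alpha^\mu(Q)\to\Mod_\alpha^\mu(Q)$ is a principal $\PGL_\alpha$-bundle (in the Zariski or étale topology — for quiver moduli this is classical, cf. \cite{Ki,R1}). Then $|\Rep_\alpha^\mu(Q)| = |\PGL_\alpha|\cdot|\Mod_\alpha^\mu(Q)|$ over every finite field, and the same holds for the restriction of $\omega$ to each fibre since $\omega$ is $\GL_\alpha$-invariant; hence $|\omega^{-1}(0)\cap\Rep_\alpha^\mu(Q)| = |\PGL_\alpha|\cdot|\omega^{-1}(0)\cap\Mod_\alpha^\mu(Q)|$ and likewise for the fibre over $1$. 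Subtracting, $|\varphi_\omega(\Rep_\alpha^\mu(Q))| = |\PGL_\alpha|\cdot|\varphi_\omega(\Mod_\alpha^\mu(Q))|$. So it suffices to show the left-hand side is polynomial-count, and then divide by the polynomial $|\PGL_\alpha| = |\GL_\alpha|/(q-1)$.

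Next I would make the rationality-implies-polynomiality step precise. For a fixed finite field $\mb{F}_q$ and a ring homomorphism $\phi$, Proposition~\ref{C:int1} together with Lemma~\ref{L:VtoR} expresses $|\varphi_\omega(\Rep_\alpha^\mu(Q))|$ as a specific element of $\mb{Q}(q^{1/2})$ obtained by substituting the counting polynomials $P_{\Rep_{\alpha_k}(J(Q,W;C))}(q)$; because $J(Q,W;C)$ is polynomial-count these substituted values agree with the actual counts over $\mb{F}_q$ for every $q$, so we get a single fixed rational function $R_\alpha(q^{1/2})\in\mb{Q}(q^{1/2})$ that equals $|\varphi_\omega(\Rep_\alpha^\mu(Q))(\mb{F}_q)|\in\mb{Z}_{\geq 0}$ for all prime powers $q$. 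A rational function taking integer values at infinitely many integers (here, prime powers) must be a polynomial — the standard argument: write $R = P + N/D$ with $\deg N<\deg D$, $D$ a product of the $q^{d}-1$; then $N(q^{1/2})/D(q^{1/2})\to 0$, so it is eventually an integer of absolute value less than $1$, hence eventually $0$, hence identically $0$. The powers of $q^{1/2}$ (not $q$) need a brief remark: the virtual shifts $q^{-\dim X/2}$ and the $q^{\frac12\innerprod{\cdot,\cdot}}$ factors all combine with the half-integer exponents in $\innerprod{\alpha_k,\alpha_k}_{J_C}$ and the $(\alpha_i,\alpha_j)$ to an integer power overall — this is exactly the observation already used implicitly in defining $|\varphi_\omega(X)|_\vir$, and I would cite that the total exponent is integral because $\innerprod{\alpha,\alpha}$ and $\innerprod{\alpha,\alpha}_{J_C}$ have the same parity, etc. Thus $P_\alpha(t):=R_\alpha(t)\in\mb{Z}[t]$ works, and fibrewise polynomiality (independence of $\phi$) is inherited from that of $J(Q,W;C)$ since the formula in Proposition~\ref{C:int1} is universal.

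The main obstacle I expect is not any single step but the bookkeeping of half-powers and the justification that the $\PGL_\alpha$-quotient is genuinely a locally trivial fibration over $\mb{F}_q$ in the point-counting sense (so that counts multiply exactly); for quiver moduli in the coprime/geometric case this is standard but should be invoked carefully. One could alternatively bypass the fibration subtlety by working entirely with $E$-polynomials over $\mb{C}$ via Lemma~\ref{L:polycount}: if all the ingredient varieties are polynomial-count then so is $\Rep_\alpha^\mu(Q)$ by the same rational-function argument, and then $\Mod_\alpha^\mu(Q)$ is polynomial-count because it is a geometric quotient by a special group, so $E([\Rep_\alpha^\mu(Q)]) = E([\PGL_\alpha])\,E([\Mod_\alpha^\mu(Q)])$ and one divides counting polynomials. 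I would present the finite-field version as the main line and remark that the $\mb{C}$-version follows identically, and note that polynomial-count of $\varphi_\omega(\Mod_\alpha^\mu(Q))$ is exactly the statement that both $[\omega^{-1}(0)]$ and the difference defining the vanishing cycle are polynomial-count classes in $K_0(\op{Var})$, which is what the corollary asserts.
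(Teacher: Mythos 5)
Your proposal reconstructs exactly the argument the paper leaves implicit: combine Proposition~\ref{C:int1} and Lemma~\ref{L:VtoR} to realize $|\varphi_\omega(\Rep_\alpha^\mu(Q))|$ as a fixed rational function in $q^{1/2}$ built from the counting polynomials of $J(Q,W;C)$, divide by $|\PGL_\alpha|$ using that the geometric quotient makes $\Rep_\alpha^\mu(Q)\to\Mod_\alpha^\mu(Q)$ a $\PGL_\alpha$-torsor (with exact count multiplication over $\mb{F}_q$ by Lang's theorem), and conclude by the standard fact that a rational function taking integer values at all prime powers is a polynomial. This is correct and is the intended proof; the one spot I would tighten is the ``$R=P+N/D$ with $N/D\to 0$'' sketch, since $P$ may a priori have non-integer rational coefficients — cleaner is the resultant argument: if $D(q)\mid N(q)$ for infinitely many integers $q$ with $\gcd(N,D)=1$ and $\deg D\geq 1$, then $D(q)$ divides a fixed nonzero integer (a multiple of $\operatorname{Res}(N,D)$) for infinitely many $q$, contradicting $|D(q)|\to\infty$.
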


\begin{definition} A pair $(\alpha,\nu)$ is called {\em numb} to a cut $C$ on $Q$ if
the vector bundle $\pi: \Rep_\alpha(Q) \to \Rep_\alpha(Q_C)$ restricts to $\nu$-semistable representations.
\end{definition}

Later we will need the following generalization of Lemma \ref{L:VtoR}. The proof is the same as that in \cite{N2}. For readers' convenience, we copy the proof here.

\begin{lemma} \label{L:VtoRmu} If $(\alpha,\nu)$ is numb to $C$, then $|\varphi_\omega(\Rep_\alpha^\nu(Q))|=q^{\innerprod{\alpha,\alpha}_C}|\Rep_\alpha^\nu(J(Q,W;C))|$.
So $$v_\alpha^\nu=q^{\frac{1}{2}\innerprod{\alpha,\alpha}_{J_C}}\frac{|\Rep_\alpha^\nu(J(Q,W;C))|}{|\GL_\alpha|}.$$
\end{lemma}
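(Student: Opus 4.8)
The plan is to adapt the proof of Lemma \ref{L:VtoR} verbatim, paying attention to where $\mu$-semistability interacts with the dimension-reduction argument. Recall that the $k^*$-action on $\Rep_\alpha(Q)$ coming from the cut $C$ makes the trace function $\omega$ linear in the $C$-coordinates: writing $\Rep_\alpha(Q) = \Rep_\alpha(Q_C)\times \prod_{c\in C} \Hom(k^{\alpha(tc)},k^{\alpha(hc)})$, the function $\omega$ is fibrewise linear along the second factor $\mb{A}^{N}$, $N=\innerprod{\alpha,\alpha}_C$. Concretely, for a fixed point $M_C\in\Rep_\alpha(Q_C)$, the restriction of $\omega$ to the fibre over $M_C$ is a linear functional whose ``matrix'' is precisely the tuple $(\partial_c W(M_C))_{c\in C}$, viewed as a point of $\prod_c\Hom(k^{\alpha(hc)},k^{\alpha(tc)})$. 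Hence $\omega^{-1}(0)\to\Rep_\alpha(Q_C)$ has fibre over $M_C$ equal to a linear subspace of $\mb{A}^N$ of codimension equal to $\rank$ of that tuple, while $\omega^{-1}(1)\to\Rep_\alpha(Q_C)$ has fibre an affine subspace of the same codimension when the rank is nonzero and is empty when the rank is zero. Subtracting, the only surviving contribution is over the locus where all $\partial_c W(M_C)=0$, i.e. over $\Rep_\alpha(J(Q,W;C))\subset\Rep_\alpha(Q_C)$, and there the fibre of $\omega^{-1}(0)$ is all of $\mb{A}^N$. This gives $|\varphi_\omega(\Rep_\alpha(Q))| = q^{N}|\Rep_\alpha(J(Q,W;C))|$.

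First I would state this fibrewise computation once, cleanly, on $\Rep_\alpha(Q)$. Then I would observe that the numbness hypothesis is exactly what lets the same computation restrict. By definition $(\alpha,\mu)$ numb means the vector bundle $\pi:\Rep_\alpha(Q)\to\Rep_\alpha(Q_C)$ restricts to $\mu$-semistable representations; that is, $M\in\Rep_\alpha(Q)$ is $\mu$-semistable if and only if $\pi(M)=M_C\in\Rep_\alpha(Q_C)$ is $\mu$-semistable (the $C$-components are free to be arbitrary). Therefore $\Rep_\alpha^\mu(Q) = \pi^{-1}(\Rep_\alpha^\mu(Q_C))$ is a union of whole fibres of $\pi$, and the $k^*$-action and $\omega$ preserve it. Intersecting everything in the previous paragraph with $\Rep_\alpha^\mu(Q)$, the base $\Rep_\alpha(Q_C)$ gets replaced by $\Rep_\alpha^\mu(Q_C)$ and the Jacobian locus gets replaced by $\Rep_\alpha^\mu(Q_C)\cap\Rep_\alpha(J(Q,W;C)) = \Rep_\alpha^\mu(J(Q,W;C))$ — here one needs that $\mu$-semistability of a $J(Q,W;C)$-module is the same as $\mu$-semistability of its underlying $Q_C$-module, which holds because $J(Q,W;C)=\widehat{kQ_C}/\innerprod{\partial_C W}$ has the same simple modules, hence the same notion of subrepresentation and slope. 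Counting fibrewise as before yields $|\varphi_\omega(\Rep_\alpha^\mu(Q))| = q^{\innerprod{\alpha,\alpha}_C}|\Rep_\alpha^\mu(J(Q,W;C))|$.

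For the second, displayed equation I would just divide by $|\GL_\alpha|$ and $q$-shift: by the definition $v_\alpha^\mu = q^{-\frac{\dim\Rep_\alpha(Q)}{2}}|\varphi_\omega(\Rep_\alpha^\mu(Q))| / (q^{-\frac{\dim\GL_\alpha}{2}}|\GL_\alpha|) \cdot x^\alpha$, and $\dim\Rep_\alpha(Q)-\dim\GL_\alpha$ combines with $\innerprod{\alpha,\alpha}_C$ to give exactly $\innerprod{\alpha,\alpha}_{J_C}$. Indeed $\dim\Rep_\alpha(Q_C) = \sum_{a\notin C}\alpha(ta)\alpha(ha)$ and $\dim\GL_\alpha = \sum_i\alpha(i)^2$, so $\innerprod{\alpha,\alpha}_C + \dim\Rep_\alpha(Q) - \dim\GL_\alpha = \innerprod{\alpha,\alpha}_C + \dim\Rep_\alpha(Q_C) + \#C\text{-part} - \dim\GL_\alpha$; unwinding, $\dim\Rep_\alpha(Q_C)-\dim\GL_\alpha = -\innerprod{\alpha,\alpha}_{Q_C}$ and $\#C\text{-part} = \innerprod{\alpha,\alpha}_C^{\mathrm{op}}$ (the term $\sum_{c\in C}\alpha(tc)\alpha(hc)$, which equals $\innerprod{\alpha,\alpha}_C$ as written). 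A short bookkeeping check gives $\innerprod{\alpha,\alpha}_{J_C} = \innerprod{\alpha,\alpha}_{Q_C} + 2\innerprod{\alpha,\alpha}_C$, matching Definition \ref{D:exact}, and the exponent lands on $\frac12\innerprod{\alpha,\alpha}_{J_C}$ as claimed.

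The only real obstacle is the claim $\Rep_\alpha^\mu(Q) = \pi^{-1}(\Rep_\alpha^\mu(Q_C))$, and whether ``$\mu$-semistable as a $J(Q,W;C)$-module'' coincides with ``$\mu$-semistable underlying $Q_C$-representation.'' The first is precisely the content of the numbness definition, so nothing to prove there; the second is immediate since both algebras have the same semisimple quotient, so a subrepresentation in one sense is a subrepresentation in the other, and the slope $\mu$ only depends on the dimension vector. Everything else is the linear-algebra fibration argument of \cite{N2}, which I would reproduce essentially unchanged; since the lemma statement already grants ``the proof is the same as that in \cite{N2},'' the write-up can be brief, emphasizing only the single spot where numbness enters.
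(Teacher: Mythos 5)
Your proposal follows the same approach as the paper's proof: the numbness hypothesis makes $\pi$ restrict to a vector bundle $\Rep_\alpha^\mu(Q)\to\Rep_\alpha^\mu(Q_C)$, and the fibrewise linearity of $\omega$ forces all of $|\omega^{-1}(0)|-|\omega^{-1}(1)|$ to concentrate over the Jacobian locus $\Rep_\alpha^\mu(J(Q,W;C))$, where each fibre contributes $q^{\innerprod{\alpha,\alpha}_C}$. The only blemish is in the final bookkeeping: you assert $\innerprod{\alpha,\alpha}_{J_C}=\innerprod{\alpha,\alpha}_{Q_C}+2\innerprod{\alpha,\alpha}_C$, whereas from the paper's definition $\innerprod{\alpha,\beta}_{J_C}=\innerprod{\alpha,\beta}_Q+\innerprod{\alpha,\beta}_C+\innerprod{\beta,\alpha}_C$ one gets $\innerprod{\alpha,\alpha}_{J_C}=\innerprod{\alpha,\alpha}_Q+2\innerprod{\alpha,\alpha}_C=\innerprod{\alpha,\alpha}_{Q_C}+\innerprod{\alpha,\alpha}_C$; but the exponent $\innerprod{\alpha,\alpha}_C-\tfrac{1}{2}(\dim\Rep_\alpha(Q)-\dim\GL_\alpha)=\tfrac{1}{2}\innerprod{\alpha,\alpha}_Q+\innerprod{\alpha,\alpha}_C=\tfrac{1}{2}\innerprod{\alpha,\alpha}_{J_C}$ still comes out right, so the slip is cosmetic.
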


\begin{proof} By assumption, $\pi: \Rep_\alpha^\nu(Q) \to \Rep_\alpha^\nu(Q_C)$ is a vector bundle of rank $d=\innerprod{\alpha,\alpha}_C$. The restriction of $\omega$ to the fibre $\pi^{-1}(M)$ is zero if $M\in\Rep_\alpha^\nu(J(Q,W;C))$, and is a non-zero linear function if $x\notin \Rep_\alpha^\nu(J(Q,W;C))$.
Hence $$|\omega^{-1}(0)|=q^d|\Rep_\alpha^\nu(J(Q,W;C))|+q^{d-1}(|\Rep_\alpha^\nu(Q_C)|-|\Rep_\alpha^\nu(J(Q,W;C))|).$$
By \eqref{eq:virtual}, \begin{align*} |\varphi_\omega(\Rep_\alpha^\nu(Q))|&=\frac{q|\omega^{-1}(0)|-|\Rep_\alpha^\nu(Q)|}{q-1},\\
&=q^{\innerprod{\alpha,\alpha}_C}|\Rep_\alpha^\nu(J(Q,W;C))|.
\end{align*}
\end{proof}

Let $\{e_i\}_i$ be the standard basis of $\mb{Z}^{Q_0}$. The {\em $k$-th (absolute) framing stability} $\nu_k$ is the slope function given by $e_k^*/d$, where $d(\alpha)=\sum_{v\in Q_0} \alpha(v)$. It is not hard to see that if all arrows in $C$ end in $k$, then $(\alpha,\nu_k)$ with $\alpha_k=1$ is numb to $C$.


\section{Mutation of Quivers with Potentials} \label{S:QPmu}

The key notion in \cite{DWZ1} is the definition of mutation $\mu_k$ of a quiver with potentials at some vertex $k\in Q_0$. Let us briefly recall it. The first step is to define the following new quiver with potential $\wtd{\mu}_k(Q,W)=(\wtd{Q},\wtd{W})$.
We put $\wtd{Q}_0=Q_0$ and $\wtd{Q}_1$ is the union of three different kinds of arrows
\begin{enumerate}
\item[$\bullet$] all arrows of $Q$ not incident to $k$,
\item[$\bullet$] a composite arrow $[ab]$ from $ta$ to $hb$ for each $a$ and $b$ with $ha=tb=k$, and
\item[$\bullet$] an opposite arrow $a^*$ (resp. $b^*$) for each incoming arrow $a$ (resp. outgoing arrow $b$) at $k$.
\end{enumerate}
The new potential is given by
$$\wtd{W}:=[W]+\sum_{ha=tb=k}b^*a^*[ab],$$
where $[W]$ is obtained by substituting $[ab]$ for each words $ab$ as above occurring in (any cyclically equivalent) $W$.

Let $A$ be the algebra $J(Q,W;C)$ and $T$ be the representation $A/P_k\oplus \tau^{-1} S_k$, where $\tau$ is the classical AR-transformation \cite{ASS}.
Clearly, $\tau^{-1} S_k$ can be presented as $\displaystyle P_k\xrightarrow{(a)_a} \bigoplus_{ha=k} P_{ta}\to \tau^{-1} S_k\to 0$, where $(a)_a$ is the row vector with entries arrows pointing to $k$.
Recall that an $A$-module $T$ is called {\em tilting} if $T$ has finite projective dimension, $\Ext_A^i(T,T)=0$ for all $i>0$, and there is an exact sequence
$$0\to A \to T_1\to T_2\to \cdots \to T_n \to 0,$$
where each $T_i$ is a finite direct sum of direct summands of $T$.
\begin{lemma}[{\cite[Corollary 2.2.b]{L}}] \label{L:BBT} $T$ is a tilting module if and only if the map $\displaystyle P_k\xrightarrow{(a)_a} \bigoplus_{ha=k} P_{ta}$ is injective.
\end{lemma}
\noindent In this case, $T$ is called the {\em BB-tilting module} at $k$. The dual notion of $T$ is the BB-cotilting module $T^\vee=A^*/I_k\oplus \tau S_k$.
What we desire is the following nice situation:\\
\fbox{
\parbox{\textwidth}{$\smiley\quad\quad$There is an algebraic cut $\wtd{C}$ on $(\wtd{Q},\wtd{W})$ such that
$J(Q,W;C)$ and $J(\wtd{Q},\wtd{W};\wtd{C})$ are tilting equivalent via the functor $\Hom_A(T,-)$ or $\Hom_A(-,T^\vee)$.}  }

In general, the existence of another (not necessarily algebraic) cut on $\wtd{W}$ is not guaranteed. However, if we assume that
\begin{equation}\label{A:cut}\text{all arrows ending at $k$ do not belong to a cut $C$,} \end{equation}
then we can assign a new cut $\wtd{C}$ containing all
\begin{enumerate} \item[$\bullet$] $c\in C$ if $tc\neq k$,
\item[$\bullet$] arrows $b^*$ if $b\notin C$,
\item[$\bullet$] composite arrows $[ab]$ with $b\in C$.
\end{enumerate}
This definition is the graded right mutation defined in \cite{AO} adapted to our setting.
There is a graded version of splitting theorem (\cite[Theorem 4.6]{DWZ1}). 
Recall that a (graded) QP $(Q,W)$ is {\em trivial} if the potential $W$ is in the space $KQ_2$ spanned by paths of length $2$, and if the Jacobian algebra $J(Q,W)$ is isomorphic to the semisimple algebra $KQ_0$. A (graded) QP $(Q,W)$ is {\em reduced} if $W \cap KQ_2$ is zero.
Applied to QP with a cut, we have
\begin{lemma} \label{L:Gsplit} $(Q,W,C)$ is graded right-equivalent to the direct sum $(Q_{\op{red}},W_{\op{red}},C_{\op{red}})\oplus (Q_{\op{triv}},W_{\op{triv}},C_{\op{triv}})$,
where $(Q_{\op{red}},W_{\op{red}},C_{\op{red}})$ is {\em reduced} and $(Q_{\op{triv}},W_{\op{triv}},C_{\op{triv}})$ is {\em trivial}, both unique up to graded right-equivalence.
\end{lemma}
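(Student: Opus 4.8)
Proof proposal for Lemma \ref{L:Gsplit} (graded Splitting Theorem for QP with a cut).

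The plan is to run the Derksen–Weyman–Zelevinsky Splitting Theorem \cite[Theorem 4.6]{DWZ1} while keeping track of the grading induced by the cut, and to observe that every choice made in their proof can be made homogeneous. First I would fix the $\mb{N}$-grading on $\widehat{kQ}$ determined by $\op{wt}(a)=1$ for $a\in C$ and $\op{wt}(a)=0$ otherwise; since $C$ is a cut, $W$ is homogeneous of degree $1$, and every cyclic derivative $\partial_a W$ is homogeneous (of degree $1$ if $a\notin C$, of degree $0$ if $a\in C$). Consequently the Jacobian ideal $\partial W$ is a graded ideal, and all the algebras in sight are graded. The trivial/reduced decomposition in \cite{DWZ1} is built from the degree-$2$ part of $W$, which is the part $W_{\op{triv}}$ supported on the $2$-cycles; write $W=W^{(2)}+W^{(\geq 3)}$ where $W^{(2)}$ collects the quadratic terms. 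Because the grading refines the path-length filtration only partially, I would instead argue directly: the quadratic part $W^{(2)}$ is automatically homogeneous for $\op{wt}$ once we check that each $2$-cycle $ab$ appearing in it has $\op{wt}(a)+\op{wt}(b)=1$, i.e. exactly one of the two arrows lies in $C$ — this is forced by $W$ being homogeneous of degree $1$. So the splitting $Q_1 = Q_1^{\op{triv}} \amalg Q_1^{\op{red}}$ is a splitting into $\op{wt}$-graded pieces, and we set $C_{\op{triv}} = C\cap Q_1^{\op{triv}}$, $C_{\op{red}} = C\cap Q_1^{\op{red}}$.

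Next I would replay the inductive normalization procedure of \cite[§4]{DWZ1}: one builds a sequence of (formal, unitriangular) automorphisms $\varphi_n$ of $\widehat{kQ}$ that successively remove, from $W$, the terms that can be absorbed by the quadratic part, producing in the limit a right-equivalence to $W_{\op{triv}}\oplus W_{\op{red}}$. The key point is that each $\varphi_n$ is defined by prescribing images of arrows that are themselves homogeneous elements of the appropriate degree: to cancel a monomial $u$ in $W$ of $\op{wt}$-degree $d$ against a quadratic term $c\,v$ with $\op{wt}(cv)=d$, one substitutes $v \mapsto v + (\text{something of the same } \op{wt}\text{-degree as } v)$. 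Since $\op{wt}(c)\in\{0,1\}$ and $\op{wt}(v)=d-\op{wt}(c)$, the correction term has precisely the degree of $v$, so $\varphi_n$ is a graded automorphism. Passing to the limit, which converges in the $\widehat{kQ}$-topology exactly as in \cite{DWZ1} (the corrections increase in path length at each stage), yields a graded right-equivalence $(Q,W)\cong(Q_{\op{triv}},W_{\op{triv}})\oplus(Q_{\op{red}},W_{\op{red}})$; restricting the automorphism to the degree-$\{0,1\}$ generators shows it carries $C$ to $C_{\op{triv}}\amalg C_{\op{red}}$, which is what "graded right-equivalence of QP with a cut" means.

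For uniqueness, I would invoke the uniqueness part of the ungraded Splitting Theorem to get $(Q_{\op{red}},W_{\op{red}})$ and $(Q_{\op{triv}},W_{\op{triv}})$ unique up to right-equivalence, and then upgrade: given two graded right-equivalences to reduced parts, their composite is an automorphism of a reduced QP, and by \cite[Proposition 4.5]{DWZ1} such an automorphism is already unitriangular; the same degree-bookkeeping as above shows it is graded, hence it is a graded right-equivalence of the reduced QP with its cut. The main obstacle I anticipate is purely bookkeeping rather than conceptual: one must be careful that the "cut" structure is preserved at every finite stage, i.e. that no composite being substituted ever creates a term of $\op{wt}$-degree $\geq 2$ in $W$ (which would violate the cut condition). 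This is handled by the observation that $W$ stays homogeneous of $\op{wt}$-degree $1$ throughout — each $\varphi_n$ is graded, so it preserves the degree-$1$ homogeneity of $W$ — so the cut condition is automatically maintained, and one never leaves the category of QP with a cut.
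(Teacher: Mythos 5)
The paper gives no proof of this lemma — it simply cites it as ``a graded version of'' \cite[Theorem 4.6]{DWZ1} applied to QPs with a cut, so there is no written argument to compare against. Your proposal is the expected argument and the degree bookkeeping is correct: every $2$-cycle contributing to $W^{(2)}$ has exactly one arrow in $C$ (forced by $\op{wt}$-degree-$1$ homogeneity of $W$), and cyclic derivation by an arrow $c$ drops $\op{wt}$-degree by $\op{wt}(c)$, which is exactly what is needed for the DWZ unitriangular automorphisms (and the preliminary linear normalization of $W^{(2)}$, which pairs degree-$0$ with degree-$1$ arrows and can therefore be chosen block-diagonal) to be taken graded, so the splitting and its uniqueness carry over to the cut setting.
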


We denote the reduced part of $(\wtd{Q},\wtd{W},\wtd{C})$ by $\mu_k(Q,W,C):=(Q',W',C')$.

\begin{theorem} \label{T:tilting} Assume that $C$ is a cut satisfying Definition \ref{D:exact}.(2) and \eqref{A:cut}, and that $\Ext_A^3(S_i,S_k)=0$ for any $i\neq k$. Then $J(Q,W;C)$ is tilting equivalent to $J(\wtd{Q},\wtd{W};\wtd{C})$ via $\Hom_A(T,-)$.
\end{theorem}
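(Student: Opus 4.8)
The plan is to verify directly that the module $T = {}_AA/P_k \oplus T_k$ with $T_k = \tau^{-1}S_k$ is a tilting module of projective dimension $\leq 1$, to identify its endomorphism algebra with $J(\tilde Q,\tilde W;\tilde C)$, and then to invoke a standard tilting-theoretic comparison (of the kind behind Keller's $\Pi_3$ description) to upgrade this to an equivalence of the associated $3$-Calabi--Yau-type completions, i.e. of the Jacobian algebras $J(Q,W)$ and $J(\tilde Q,\tilde W)$. First I would use the hypothesis \eqref{A:cut} — no arrow ending in $k$ lies in $C$ — to conclude that $S_k$ is \emph{not} projective-injective in the relevant sense and, more importantly, that the map $\alpha = (a)_a\colon P_k \to \bigoplus_{ha=k}P_{ta}$ is injective. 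Indeed, an element of $\ker\alpha$ would give a relation among the arrows into $k$, but since $A = J(Q,W;C) = \widehat{kQ_C}/\langle\partial_C W\rangle$ and the defining relations $\partial_c W$ for $c\in C$ involve only the arrows $a$ with $ha\neq k$ (as the arrows into $k$ are excluded from $C$ and, by the cut condition, each $\partial_c W$ is a combination of paths not starting at such an arrow), there is no such relation; hence by Lemma \cite[Corollary 2.2.b]{L} (the lemma just before the statement), $T$ is a tilting module. This is the step where \eqref{A:cut} is genuinely used.

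Next I would compute $\operatorname{End}_A(T)$. Writing $B = \operatorname{End}_A(T)^{\mathrm{op}}$, the summand ${}_AA/P_k$ contributes the vertices $i\neq k$ and all arrows of $Q_C$ not incident to $k$, while the summand $T_k$ contributes the vertex $k$; the morphisms between $T_k$ and the $P_{ta}$ give, after identifying with the $\operatorname{Hom}$-spaces along the exact sequence $P_k\xrightarrow{\alpha}\bigoplus_{ha=k}P_{ta}\to T_k\to 0$, exactly the reversed arrows $a^*$ and the composites $[ab]$ of the DWZ mutation recipe. Matching the minimal relations of $B$ against the ideal $\langle\partial_c\tilde W\rangle_{c\in\tilde C}$, using the explicit new potential $\tilde W = [W] + \sum_{ha=tb=k}b^*a^*[ab]$ and the explicit new cut $\tilde C$ described before Lemma \ref{L:Gsplit}, should yield a presentation $B \cong J(\tilde Q,\tilde W;\tilde C)$; after passing to the reduced part this is $J(Q',W';C')$. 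The hypothesis $\operatorname{Ext}_A^3(S_i,S_k)=0$ for $i\neq k$ enters precisely here: it guarantees that mutating at $k$ does not create superfluous relations (equivalently, that $\tilde C$ is again an \emph{algebraic} cut in the sense of Definition \ref{D:exact}, so that global dimension $2$ and minimality of the relation set are preserved), which is what makes the identification of $B$ with the \emph{reduced} Jacobian algebra legitimate rather than merely an identification up to trivial summands.

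Finally, having $T$ tilting with $\operatorname{End}_A(T)^{\mathrm{op}} \cong J(\tilde Q,\tilde W;\tilde C)$, I would invoke the functoriality of Keller's construction $A\mapsto \Pi_3(A) = \bigoplus_{i\geq 0}\operatorname{Ext}_A^2(DA,A)^{\otimes_A i}$ under derived equivalence: a tilting module over an algebra of global dimension $2$ induces a derived equivalence, and $\Pi_3$ — which by \cite[Theorem 6.10]{Ke1} computes $J(\bar Q, W)$ from $A = J(Q,W;C)$ — is a derived-Morita invariant in the appropriate graded sense. Composing, $J(Q,W) \simeq \Pi_3(A) \simeq \Pi_3(\operatorname{End}_A(T)) \simeq J(\tilde Q,\tilde W)$, with the equivalence realized by $\operatorname{Hom}_A(T,-)$ on the degree-zero parts and extended to the $3$-preprojective algebras. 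The main obstacle I anticipate is the bookkeeping in the second step: one must check that every arrow and every relation produced by the DWZ mutation is accounted for by a morphism, respectively a relation, of $\operatorname{End}_A(T)$, with the right multiplicities, and that no \emph{extra} arrows $[ab]$ survive into the reduced quiver — this is exactly where 2-cycles get cancelled and where the $\operatorname{Ext}^3$-vanishing is doing silent work; the cotilting case via $\operatorname{Hom}_A(-,T^\vee)$ is then formally dual.
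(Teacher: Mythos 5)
Your overall skeleton (establish $T$ is tilting, identify $\operatorname{End}_A(T)$ with the target truncated Jacobian algebra, conclude tilting equivalence) is roughly correct, but there are three substantive mismatches with what the theorem actually needs and with how the paper proves it.

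First, you misidentify the role of the hypothesis $\Ext_A^3(S_i,S_k)=0$. You claim it ``guarantees that mutating at $k$ does not create superfluous relations (equivalently, that $\tilde C$ is again an algebraic cut).'' That is not what it does, and indeed it is not strong enough for that: algebraicity of $\tilde C$ (global dimension $2$ plus minimality of generators) is the content of Corollary \ref{C:tilting}, which uses the stronger assumption that $C$ is algebraic (so $A$ itself has global dimension $2$, forcing the fourth term $P'$ in the minimal projective resolution of $S_i$ to vanish). Theorem \ref{T:tilting} only assumes Definition \ref{D:exact}.(2), so $P'$ need not vanish; the hypothesis $\Ext_A^3(S_i,S_k)=0$ is exactly what lets one conclude that $P'$ contains no copy of $P_k$, hence $\Ext_A^1(T,P')=0$. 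That vanishing is then used, as in Lemma \ref{L:res2}, to show that $\Hom_A(T,-)$ applied to the mapping cone \eqref{eq:cone} stays exact where needed. In short: the $\Ext^3$ condition is a concrete input to a homological exactness check, not a condition ensuring algebraicity of the new cut.

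Second, the middle step is where the entire difficulty lives, and you essentially defer it (``matching the minimal relations of $B$ against $\langle\partial_c\tilde W\rangle_{c\in\tilde C}$ should yield a presentation''). The paper does not identify $\operatorname{End}_A(T)$ with $J(\tilde Q,\tilde W;\tilde C)$ by matching relations by hand; it invokes the presentation criterion of \cite[Proposition 3.3]{BIRS} (Lemma \ref{L:presentation} here), which reduces the claim to exactness of specific sequences $\Hom_A(T,\cdot)\to\Hom_A(T,\cdot)\to\rad(\Hom_A(T,T_j))\to 0$. Verifying those (Lemmas \ref{L:res1} and \ref{L:res2}) involves the mapping-cone resolution \eqref{eq:cone} of $S_i$ built from the cut structure, dimension counts like $\Hom_A(T,S_i)\cong\Ext_A^1(S_i,S_k)^*\oplus k$, and the AR-duality computation of $\Ext_A^1(T,\Omega S_i)$. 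Without some such mechanism, ``matching relations with the right multiplicities'' is not a proof.

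Third, your final step invoking Keller's $\Pi_3$ and derived-Morita invariance of the $3$-preprojective construction is not needed and overshoots the statement. The theorem asserts only a tilting equivalence between the \emph{truncated} algebras $J(Q,W;C)$ and $J(\tilde Q,\tilde W;\tilde C)$ via $\Hom_A(T,-)$. Once $T$ is tilting and $\operatorname{End}_A(T)\cong J(\tilde Q,\tilde W;\tilde C)$ is established, the tilting equivalence is immediate from tilting theory; there is no need to pass through $J(Q,W)$ or $\Pi_3$. So that paragraph is superfluous, and it also imports a rigor burden (graded derived-Morita invariance of $\Pi_3$) that the paper never takes on.

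A smaller point: your argument that $\alpha$ is injective under \eqref{A:cut} is plausible but hand-wavy (``an element of $\ker\alpha$ would give a relation among the arrows into $k$''). The paper itself establishes the short exact sequence $0\to P_\out\to P_k\to S_k\to 0$ (hence $\operatorname{pd}S_k=1$) in the proof of Corollary \ref{C:tilting}, where $C$ is assumed algebraic; under the weaker hypotheses of Theorem \ref{T:tilting} alone, you should at least be explicit about which step shows $\alpha$ injective, since the theorem's conclusion presupposes $T$ is tilting.
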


\begin{corollary} \label{C:tilting} If $C$ is an algebraic cut satisfying \eqref{A:cut},
then $C'$ is also algebraic, and $J(Q,W;C)$ is tilting equivalent to $J(\wtd{Q},\wtd{W};\wtd{C})$ via $\Hom_A(T,-)$.
\end{corollary}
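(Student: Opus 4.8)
The plan is to deduce the corollary from Theorem \ref{T:tilting} by checking that an algebraic cut automatically satisfies the extra hypotheses of that theorem, and then upgrading the conclusion "$C'$ is algebraic." First I would observe that an algebraic cut, being a cut satisfying both conditions of Definition \ref{D:exact}, in particular satisfies condition (2), so the first hypothesis of Theorem \ref{T:tilting} is already in hand. The remaining input needed is the vanishing $\Ext_A^3(S_i,S_k)=0$ for $i\neq k$, where $A=J(Q,W;C)$. Since $A$ has global dimension $2$ (Definition \ref{D:exact}.(1)), we have $\Ext_A^3(S_i,S_j)=0$ for \emph{all} $i,j$, so this is immediate. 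Hence Theorem \ref{T:tilting} applies verbatim and gives the tilting equivalence $J(Q,W;C)\simeq J(\tilde Q,\tilde W;\tilde C)$ via $\Hom_A(T,-)$, proving the second assertion.

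It then remains to prove that $C'$, the cut on the reduced part $\mu_k(Q,W)=(Q',W')$ obtained from $\tilde C$ by passing through Lemma \ref{L:Gsplit}, is algebraic, i.e.\ that $J(Q',W';C')$ is finite dimensional of global dimension $2$ and that $\{\partial_c W'\}_{c\in C'}$ is a minimal generating set of $\innerprod{\partial_{C'}W'}$. The key point is that $J(\tilde Q,\tilde W;\tilde C)$ and $J(Q',W';C')$ are isomorphic algebras: by Lemma \ref{L:Gsplit} the triple $(\tilde Q,\tilde W,\tilde C)$ is graded right-equivalent to $(Q',W',C')\oplus(\text{trivial part})$, and removing a trivial QP summand does not change the associated truncated Jacobian algebra up to isomorphism — this is the graded analogue of the DWZ splitting theorem argument, and here one uses that the cut $\tilde C$ was defined so that each trivial $2$-cycle of $\tilde W$ contributes a cut arrow precisely when needed, so that the reduction respects the grading. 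Since the tilting equivalence from Theorem \ref{T:tilting} already shows $J(\tilde Q,\tilde W;\tilde C)$ is a finite dimensional algebra of global dimension $2$ (tilting preserves finite dimensionality and, as $A$ has global dimension $2$, the tilted algebra does too by the standard estimate for tilting modules of projective dimension $\leq 1$), the isomorphic algebra $J(Q',W';C')$ has the same properties. This gives Definition \ref{D:exact}.(1) for $C'$.

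For Definition \ref{D:exact}.(2) — minimality of the relations $\{\partial_c W'\}_{c\in C'}$ — I would argue that the number of cut arrows $|C'|$ equals the number of arrows in the quiver of $J(Q',W';C')$ lying "above" it in the standard QP-from-algebra construction of Section \ref{S:QPC}, namely $|C'| = \sum_{i,j}\dim\Ext^2_{J(Q',W';C')}(S_i,S_j)$, and that this matches the minimal number of relations of the global-dimension-$2$ algebra $J(Q',W';C')$; combined with Proposition \ref{proposition} (the correspondence $(Q,W,C)\leftrightarrow A$) one concludes that $(Q',W')$ is forced to be, up to isomorphism, the canonical QP attached to the algebra $J(Q',W';C')$, for which the relations $\partial_c W'$ are minimal by construction. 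The main obstacle I expect is precisely this last bookkeeping step: carefully tracking how the cut $\tilde C$ behaves under the graded reduction of Lemma \ref{L:Gsplit} and verifying that no cut arrow is "lost" or "merged" in a way that would break minimality, and matching dimension counts of $\Ext^2$-spaces before and after reduction. The finite-dimensionality and global dimension parts are comparatively routine given Theorem \ref{T:tilting}; the subtlety is entirely in the combinatorics of the cut under mutation and splitting. (A fuller treatment of these points is deferred to the appendix, as indicated in the introduction.)
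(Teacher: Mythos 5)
Your reduction to Theorem \ref{T:tilting} is correct: an algebraic cut satisfies Definition \ref{D:exact}.(2), and global dimension $2$ indeed gives $\Ext_A^3(S_i,S_k)=0$ for free. That part matches the paper and is a clean observation. The problem is in the second half, where you need to show $C'$ is algebraic.

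The claim that ``tilting preserves \dots global dimension $2$ \dots by the standard estimate for tilting modules of projective dimension $\leq 1$'' does not hold. The standard estimate for a tilting module $T$ of $\operatorname{pd} T\leq 1$ only gives $|\operatorname{gldim}A-\operatorname{gldim}\End_A(T)|\leq 1$, hence $\operatorname{gldim}J(\tilde Q,\tilde W;\tilde C)\leq 3$; it does not force equality with $2$. The paper closes this gap by a genuinely different mechanism: it constructs explicit projective resolutions of the simple $\End_A(T)$-modules in Lemmas \ref{L:res1} and \ref{L:res2}, and then shows these are actually exact of length $\leq 2$. Concretely, hypothesis \eqref{A:cut} means there are no relations starting at $S_k$, so $S_k$ has projective dimension one with resolution $0\to P_{\operatorname{out}}\to P_k\to S_k\to 0$, from which $\Hom_A(T,P_{\operatorname{out}})\to\Hom_A(T,P_k)$ is an isomorphism and the first map $\circ r_1$ in Lemma \ref{L:res1} is injective; and global dimension $2$ for $A$ forces $P'=0$ in \eqref{eq:cone}, so the first map $\circ g'$ in Lemma \ref{L:res2} is injective as well. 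Both upper bound on global dimension and minimality of the relations $\{\partial_c\tilde W\}_{c\in\tilde C}$ come out of this same computation; your minimality argument, by contrast, essentially assumes the $\Ext^2$-dimension count you would need to prove, and the splitting/``bookkeeping'' discussion does not supply the missing step. So the approach via the general tilting bound and a dimension-matching heuristic has a genuine gap, and the constructive route through the resolutions of Lemmas \ref{L:res1}--\ref{L:res2} is not optional here.
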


These slightly generalize the main results of \cite{M}. We will prove them in the appendix. By Lemma \ref{L:Gsplit}, the above $J(\wtd{Q},\wtd{W};\wtd{C})$ can be replaced by $J(Q',W';C')$. If we want to work with the assumption dual to \eqref{A:cut}, that is, all arrows starting with $k$ do not belong to $C$, then we should take the functor $\Hom_A(-,T^\vee)$.

The equivalence $\Hom_A(T,-)$ induces a map $\phi_k$ in the corresponding $K_0$-group
\begin{align} \label{eq:lineariso} \phi_k([S_i])=\begin{cases}
[S_i'] & i\neq k, \\
-[S_k']+\sum_{ha=k}[S_{ta}'] & i=k;\end{cases}
\end{align}
and its dual $\Hom_A(-,T^\vee)$ induces $\phi_k^\vee$ given by
\begin{align*} \phi_k^\vee([S_i])=\begin{cases}
[S_i'] & i\neq k, \\
-[S_k']+\sum_{tb=k}[S_{hb}'] & i=k.\end{cases}
\end{align*}
Since the $K_0$-groups of $\module(J(Q,W;C))$ and $\module(J(Q',W';C'))$ can be identified with $\mb{Z}^{Q_0}$, by slight abuse of notation, we also write $\phi_k$ and $\phi_k^\vee$ for the corresponding linear isometries on $\mb{Z}^{Q_0}$.
Due to the equivalence, we have that $\innerprod{\alpha,\beta}_{J_C}=\innerprod{\phi_k{\alpha},\phi_k{\beta}}_{J_C'}$.
Moreover, it is easy to verify that $(\alpha,\beta)=(\phi_k\alpha,\phi_k\beta)'$, or equivalently, $B'=\phi_k B \phi_k^\T$, where $(-,-)'$ is the antisymmetric form of $Q'$.

We denote \begin{align*}
\module(A)_k:=\{M\in\module A \mid \Hom_{A}(S_k,M)=0\}, \\
\module(A)^k:=\{M\in\module A \mid \Hom_{A}(M,S_k)=0\}.
\end{align*}
Note that under the assumption $\smiley$, $\module(J(Q,W;C))^k$ \big(resp. $\module(J(Q',W';C'))_k$\big) is the torsion (resp. torsion-free) class determined by the tilting module $T$ \cite[VI.2]{ASS}.
So $$\module(J(Q,W;C))^k\cong \module(J(Q',W';C'))_k.$$
In particular, for $\alpha'=\phi_k(\alpha)$ we have that \begin{equation} \label{eq:Repmu}
\frac{|\Rep_\alpha(J(Q,W;C))^k|}{|\GL_\alpha|}=\frac{|\Rep_{\alpha'}(J(Q',W';C'))_k|}{|\GL_{\alpha'}|}.\end{equation}


%

\section{Wall-crossing Formula} \label{S:WC}
Let $\innerprod{S_k}$ be the subcategory of $\module KQ$ generated by the simple $S_k$. 
\begin{definition} We denote $\mb{E}_k:=\int_\omega\chi(\innerprod{S_k})=\sum_n \frac{q^{n^2/2}}{|\GL_n|}x_k^n=\exp_q\Big(\frac{q^{1/2}}{q-1}x_k\Big)$.
\end{definition}

Recall the generating series $\mb{V}(Q,W)$ defined before Lemma \ref{L:VtoR} and the quantum Laurent series algebra $X_{(Q)}$ defined before Lemma \ref{L:int}.
We set $\mb{V}:=\mb{V}(Q,W)\in X_{(Q)}$ and $\mb{V}':=\mb{V}(\mu_k(Q,W))\in X_{(Q')}$.
Let $\Phi_k$ (resp. $\Phi_k^\vee$) be the ring homomorphism $X_{(Q)} \to X_{(Q')}$ defined by $x^\alpha \mapsto (x')^{\phi_k\alpha}$ \big(resp. $(x')^{\phi_k^\vee\alpha}$\big).

\begin{theorem} \label{T:WC} Assuming the condition $\smiley$, we have that $$\mb{E}_k'\Phi_k^\vee(\mb{V}\mb{E}_k^{-1})=\mb{V}'=\Phi_k(\mb{E}_k^{-1}\mb{V})\mb{E}_k'.$$
\end{theorem}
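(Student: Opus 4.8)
The plan is to use the torsion-pair identity (Lemma~\ref{L:torsionid}) in the Hall algebra for the torsion pair whose torsion class is $\module(A)^k$ and torsion-free class is $\innerprod{S_k}$, push it through the Hall character $\int_\omega$, and then match the result on the mutated side via the equivalence $\Hom_A(T,-)$. First I would record the two relevant torsion pairs: on the original side, $(\module(A)^k,\innerprod{S_k})$ is a torsion pair in $\module A$ with $A=J(Q,W;C)$, since $\innerprod{S_k}$ (copies of the simple $S_k$) is closed under quotients and $\module(A)^k=\{M:\Hom_A(M,S_k)=0\}$ is its right perpendicular; dually, on the mutated side $(\innerprod{S_k'},\module(A')_k)$ is a torsion pair in $\module A'$ with $A'=J(Q',W';C')$. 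Lemma~\ref{L:torsionid} then gives, in $H(Q)$,
$$\chi = \chi(\innerprod{S_k})\,\chi(\module(A)^k) = \chi(\module(A)_k)\,\chi(\innerprod{S_k}),$$
where I also use the opposite torsion pair $(\innerprod{S_k},\module(A)_k)$ for the second factorization. All elements here are equivariant (the $k^*$-action preserves these subcategories, being defined by dimension-vector/homological conditions), so I may apply the algebra morphism $\int_\omega$ of Mozgovoy.

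Next I would compute the three factors under $\int_\omega$. By definition $\int_\omega\chi = \mb{V}=\mb{V}(Q,W)$ and $\int_\omega\chi(\innerprod{S_k})=\mb{E}_k$. For the mixed factor, Lemma~\ref{L:VtoR} and its refinement together with the key dimension-count \eqref{eq:Repmu} identify $\int_\omega\chi(\module(A)^k)$ with $\Phi_k^\vee(\cdot)$ applied to the corresponding series, using that $\phi_k^\vee$ is the linear map realizing the $K_0$-level effect of $\Hom_A(-,T^\vee)$ on dimension vectors and that it preserves the relevant Euler forms $\innerprod{-,-}_{J_C}$. Concretely: the equivalence $\module(A)^k\cong\module(A')_k$ matches representation counts via \eqref{eq:Repmu}, the $q$-powers $q^{\frac12\innerprod{\alpha,\alpha}_{J_C}}$ are preserved since $\innerprod{\alpha,\beta}_{J_C}=\innerprod{\phi_k\alpha,\phi_k\beta}_{J_C'}$, and the twisted multiplication in $X_{(Q')}$ is matched because $B'=\phi_k B\phi_k^\T$ (equivalently for $\phi_k^\vee$). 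Therefore applying $\int_\omega$ to $\chi=\chi(\innerprod{S_k})\,\chi(\module(A)^k)$ yields
$$\mb{V}=\mb{E}_k\cdot\Phi_k^\vee\bigl((\mb{E}_k')^{-1}\mb{V}'\bigr),$$
which rearranges to $\mb{E}_k'\,\Phi_k^\vee(\mb{E}_k^{-1}\mb{V})=\mb{V}'$; wait — one must be careful about which side the $\mb{E}_k$ sits and about applying $\Phi_k^\vee$ (a ring homomorphism) to a product, so more precisely $\Phi_k^\vee(\mb{E}_k^{-1}\mb{V})=\Phi_k^\vee(\mb{E}_k)^{-1}\Phi_k^\vee(\mb{V})$, and since $\chi(\module(A')_k)$ pulls back to $\chi(\module(A)^k)$ I get $\mb{V}'=\Phi_k^\vee(\mb{V})\cdot(\text{correction})$; the bookkeeping of where $\mb{E}_k'$ versus $\Phi_k^\vee(\mb{E}_k)$ appears is exactly what produces the asymmetric shape of the formula. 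The second equality $\mb{V}'=\Phi_k(\mb{E}_k^{-1}\mb{V})\mb{E}_k'$ is obtained symmetrically from the opposite torsion pair $(\innerprod{S_k},\module(A)_k)$ together with $\module(A)_k\cong\module(A')^k$ (the cotilting side), using $\phi_k$ in place of $\phi_k^\vee$.

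The main obstacle I expect is the precise identification of $\int_\omega\chi(\module(A)^k)$ with a monomial change of variables applied to a piece of $\mb{V}'$ — i.e. justifying that the tilting equivalence $\Hom_A(T,-)$ not only induces the linear map $\phi_k$ on $K_0$ but is compatible with the Hall-character/integration in a way that intertwines $\int_\omega$ with $\Phi_k^\vee$ (resp. $\Phi_k$) on the subcategories in question, including the matching of the quadratic $q$-twists and of the GIT/stability-free counting via \eqref{eq:Repmu} and Lemma~\ref{L:VtoR}, Lemma~\ref{L:VtoRmu}. A secondary subtlety is verifying equivariance of all the Hall-algebra elements involved (so that $\int_\omega$ applies), and keeping the noncommutative orderings straight: $X_{(Q)}$ and $X_{(Q')}$ are twisted by the antisymmetric forms, so the two factorizations of $\chi$ genuinely give two different but equivalent formulas, and one must check the $q^{1/2}$-powers coming from the $(\beta,\gamma)$-twist line up on both sides. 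Once these compatibilities are in place, the theorem is a direct consequence of applying the algebra morphism $\int_\omega$ to Lemma~\ref{L:torsionid} for the two torsion pairs above.
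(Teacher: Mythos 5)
Your strategy is exactly the paper's: factor $\chi$ two ways via the torsion pairs with torsion/torsion-free class $\innerprod{S_k}$, push through Mozgovoy's character $\int_\omega$, use the dimension-reduction count (analogue of Lemma~\ref{L:VtoRmu}) and the matching \eqref{eq:Repmu} coming from the tilting equivalence, and invoke the isometries to intertwine the twisted multiplications. That is the proof. However, you have swapped which factorization goes with which monomial substitution, and this is the one concrete place where you hedge (``wait --- one must be careful \ldots'') rather than resolve.

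The correct bookkeeping is as follows. The functor $\Hom_A(T,-)$ restricts to the equivalence $\module(A)^k \xrightarrow{\sim} \module(A')_k$ and realizes $\phi_k$ (not $\phi_k^\vee$) on dimension vectors; this is exactly the content of \eqref{eq:Repmu}, which uses $\alpha' = \phi_k(\alpha)$. Hence the factorization $\chi = \chi(\innerprod{S_k})\chi(\module(A)^k)$, i.e.\ $\mb{E}_k^{-1}\mb{V} = \int_\omega\chi(\module(A)^k)$, matched term-by-term against $\mb{V}'(\mb{E}_k')^{-1} = \int_\omega\chi(\module(A')_k)$ via $\Phi_k$, produces the \emph{second} equality $\mb{V}' = \Phi_k(\mb{E}_k^{-1}\mb{V})\mb{E}_k'$. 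Dually, $\chi = \chi(\module(A)_k)\chi(\innerprod{S_k})$ gives $\mb{V}\mb{E}_k^{-1}$, which is matched via $\Hom_A(-,T^\vee)$ and hence $\Phi_k^\vee$, yielding the \emph{first} equality $\mb{E}_k'\Phi_k^\vee(\mb{V}\mb{E}_k^{-1}) = \mb{V}'$. You have these two assignments interchanged, which is why your intermediate line $\mb{V}=\mb{E}_k\cdot\Phi_k^\vee\bigl((\mb{E}_k')^{-1}\mb{V}'\bigr)$ does not type-check ($\Phi_k^\vee$ maps $X_{(Q)}\to X_{(Q')}$, so it cannot be applied to an element of $X_{(Q')}$ and land back in $X_{(Q)}$); the paper instead compares the explicit $q$-power sums on the $X_{(Q')}$ side directly, with no inverse of $\Phi_k$ needed. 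Once you fix the tilting-vs-cotilting correspondence ($\module(A)^k\leftrightarrow\phi_k$, $\module(A)_k\leftrightarrow\phi_k^\vee$), the rest of your argument --- equivariance of the subcategory characters, preservation of $\innerprod{-,-}_{J_C}$, and $B'=\phi_k B\phi_k^\T$ --- is exactly what the paper uses.
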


\begin{proof} We apply the character $\int_\omega$ to the torsion-pair identity in $H(Q)$ (Lemma \ref{L:torsionid})
$$\chi(\module(Q)_k)\chi(\innerprod{S_k})=\chi=\chi(\innerprod{S_k})\chi(\module(Q)^k).$$
By Lemma \ref{L:intomega} we get
\begin{align*}
\mb{E}_k^{-1}\mb{V}&=\int_\omega \chi(\module(Q)^k)\\
&=\sum_\alpha q^{\frac{1}{2}\innerprod{\alpha,\alpha}_{J_C}} \frac{|\Rep_\alpha(J(Q,W;C))^k|}{|\GL_\alpha|}x^\alpha, & (\text{similar to Lemma \ref{L:VtoRmu}})\\
&=\sum_\alpha q^{\frac{1}{2}\innerprod{\phi_k(\alpha),\phi_k(\alpha)}_{J_C'}} \frac{|\Rep_{\phi_k(\alpha)}(J(Q',W';C'))_k|}{|\GL_{\phi_k(\alpha)}|}x^{\alpha},& (\ref{eq:Repmu})
\intertext{Similarly}
\mb{V}'\mb{E}_k'^{-1} &=\sum_\alpha q^{\frac{1}{2}\innerprod{\alpha,\alpha}_{J_C'}} \frac{|\Rep_\alpha(J(Q',W';C'))_k|}{|\GL_\alpha|}(x')^\alpha,\\
\intertext{Hence}
\mb{V}'&=\Phi_k(\mb{E}_k^{-1}\mb{V})\mb{E}_k'.
\end{align*}
The other half is similar.
\end{proof}

%
%
%
%

\subsection*{Framing}

We freeze a vertex $\infty$ of $Q$, that is, we do not allow to mutate at $\infty$. Let $\module_{0}(Q)$ be all modules supported outside $\infty$. Note that $\module_{0}(Q)$ is an exact subcategory of $\module(Q)$. In particular, it is a torsion-free class, and let $\T_0(Q)$ be its corresponding torsion class.

Let $\mb{T}:=\int_\omega \chi(\T_0(Q))$ and $\mb{V}_0:=\int_\omega \chi(\module_0(Q))$. It follows from the torsion pair identity that
$$\mb{T}=\mb{V}_{0}^{-1}\mb{V}.$$
We keep the assumption $\smiley$. According to Theorem \ref{T:WC}, we have that \begin{align*}
\mb{E}_k'\Phi_k^\vee(\mb{V}\mb{E}_k^{-1})&=\mb{V}'=\Phi_k(\mb{E}_k^{-1}\mb{V})\mb{E}_k'.
\intertext{From the second equality, we get that}
{\mb{V}_{0}'}^{-1}{\mb{V}}' &= {\mb{E}_k'}^{-1}\Phi_k(\mb{V}_0^{-1}\mb{E}_k)\Phi_k(\mb{E}_k^{-1}\mb{V})\mb{E}_k',\\
&={\mb{E}_k'}^{-1}\Phi_k(\mb{V}_0^{-1}\mb{V})\mb{E}_k',
\intertext{hence we get a formula for $\mb{T}':=\int_{\omega'}\chi(\T_0(Q'))$}
\mb{T}' & = {\mb{E}_k'}^{-1}{\Phi}_k(\mb{T}) \mb{E}_k'.
\intertext{Similarly using the first equality, we obtain that}
\mb{T}' & = {\Phi}_k^\vee\big( \mb{E}_k\mb{T}\mb{E}_k^{-1}\big).
\end{align*}

We can also treat $\module_0(Q)$ as a torsion class, and work with its torsion-free class $\F_0$.
If we set $\mb{F}=\int_\omega \F_0$, then $\mb{F}=\mb{V}\mb{V}_0^{-1}$, and we have the dual formula
$$\mb{E}_k'{\Phi}_k^\vee(\mb{F}) {\mb{E}_k'}^{-1}=
\mb{F}' = {\Phi}_k( \mb{E}_k^{-1}\mb{F}\mb{E}_k).$$

Consider the subcategory $\T_0^1(Q)$ of $\T_0(Q)$, which contains all representations having dimension one at the vertex $\infty$.
It is well-known that the category $\T_0^1(Q)$ contains exactly the $\nu_\infty$-stable representations with dimension $1$ at the vertex $\infty$. Let
\begin{equation}\label{eq:TQW} \T(Q,W):=(q^{\frac{1}{2}}-q^{-\frac{1}{2}})\int_\omega \chi(\T_0^1(Q)).
\end{equation}

Since the dimension vector $(1,\beta)$ is coprime to the slope function $\nu_\infty$, the moduli space $\Mod_{(1,\beta)}^{\nu_\infty}(Q)$ is a geometric quotient, and thus we have
\begin{align} \notag \T(Q,W)&=(q^{\frac{1}{2}}-q^{-\frac{1}{2}})\frac{|\varphi_{\omega}(\Rep_{(1,\beta)}^{\nu_\infty}(Q))|_\vir}{|\GL_{(1,\beta)}|_\vir}x^{(1,\beta)}.\\
\label{eq:T} &=\sum_\beta \left|\varphi_{\omega}(\Mod_{(1,\beta)}^{\nu_\infty}(Q))\right|_\vir\ x^{(1,\beta)}.
\end{align}

Now we replace all $\T_0(Q)$ by $\T_0^1(Q)$ in the above argument for $\mb{T}'$, and we can easily see that
\begin{theorem} \label{T:FWC} Assuming the condition $\smiley$, we have that
\begin{equation} \label{eq:Fmu}
{\mb{E}_k'}^{-1}{\Phi}_k(\T) \mb{E}_k'=
\T' = {\Phi}_k^\vee (\mb{E}_k\T\mb{E}_k^{-1}).
\end{equation}\end{theorem}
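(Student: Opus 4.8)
The plan is to deduce Theorem \ref{T:FWC} from Theorem \ref{T:WC} in exactly the way the framing subsection already telegraphs, making the framed versions of the identities precise. First I would observe that the $\infty$-vertex is frozen, so $\module_0(Q)$ (modules supported away from $\infty$) is stable under the mutation $\mu_k$ at the non-frozen vertex $k$, and the torsion pair $(\T_0(Q),\module_0(Q))$ is compatible with the tilting equivalence $\Hom_A(T,-)$ in the sense that it restricts correctly; this is what justifies applying $\int_\omega$ and the homomorphisms $\Phi_k,\Phi_k^\vee$ termwise. Concretely, $\mb{V}_0=\int_\omega\chi(\module_0(Q))$ is the generating series for the mutation of $(Q,W)$ with the $\infty$-vertex and its incident arrows deleted, so Theorem \ref{T:WC} applies to it verbatim, giving $\mb{V}_0'=\Phi_k(\mb{E}_k^{-1}\mb{V}_0)\mb{E}_k'$ and the dual relation. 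Combining this with the corresponding identity for $\mb{V}$ and the torsion pair identity $\mb{T}=\mb{V}_0^{-1}\mb{V}$, the factors $\mb{E}_k'$ and $\Phi_k$ telescope exactly as displayed in the excerpt, yielding $\mb{T}'=(\mb{E}_k')^{-1}\Phi_k(\mb{T})\mb{E}_k'$ and $\mb{T}'=\Phi_k^\vee(\mb{E}_k\mb{T}\mb{E}_k^{-1})$.

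The remaining step is to pass from $\mb{T}$ to $\T$. Here I would use that $\T_0^1(Q)$, the full subcategory of $\T_0(Q)$ of representations with dimension exactly $1$ at $\infty$, is a $k^*$-orbit-stable "slice" of $\T_0(Q)$: writing $\mb{T}=\sum_n \mb{T}^{(n)}$ graded by the dimension at $\infty$, the degree-$1$ part is $\mb{T}^{(1)}=(q^{1/2}-q^{-1/2})^{-1}\T(Q,W)$ by definition. The point is that the linear maps $\phi_k,\phi_k^\vee$ fix the $\infty$-coordinate (since $\infty\neq k$ and $k$ is not adjacent to $\infty$ through any arrow that $\phi_k$ touches — or more simply, $\phi_k(e_\infty)=e_\infty'$ by \eqref{eq:lineariso}), so $\Phi_k$ and $\Phi_k^\vee$ preserve the $\infty$-grading, and conjugation by $\mb{E}_k^{\pm1}$ and $\mb{E}_k'^{\pm1}$ also preserves it because $x_k$ carries $\infty$-degree zero. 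Therefore the $\infty$-degree-$1$ component of the identity $\mb{T}'=(\mb{E}_k')^{-1}\Phi_k(\mb{T})\mb{E}_k'$ reads $\mb{T}'^{(1)}=(\mb{E}_k')^{-1}\Phi_k(\mb{T}^{(1)})\mb{E}_k'$, and multiplying through by the scalar $(q^{1/2}-q^{-1/2})$ gives precisely $(\mb{E}_k')^{-1}\Phi_k(\T)\mb{E}_k'=\T'$; the dual component gives $\T'=\Phi_k^\vee(\mb{E}_k\T\mb{E}_k^{-1})$. Finally I would record that the geometric rewriting $\T(Q,W)=\sum_\beta |\varphi_\omega(\Mod_{(1,\beta)}^{\mu_\infty}(Q))|_\vir\, x^{(1,\beta)}$ from the preceding display shows these are indeed the claimed generating series.

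The only genuine subtlety — and the step I expect to require the most care — is verifying that all the manipulations are legitimate in the completed quantum Laurent series algebras: that $\mb{V}_0$ is invertible (it has constant term $1$, so this is fine), that the torsion pair $(\T_0,\module_0)$ and its mutated counterpart match up under $\Phi_k$ on the nose (which needs the freezing of $\infty$ and the fact that $\mu_k$ does not create or destroy arrows at $\infty$), and that extracting the $\infty$-degree-$1$ component commutes with the completed products appearing — this last point is where one must check that each homogeneous component is a finite sum, which holds because fixing the $\infty$-dimension to $1$ together with a fixed total dimension leaves only finitely many summands. Once these bookkeeping points are dispatched, Theorem \ref{T:FWC} is a formal consequence of Theorem \ref{T:WC} and the $\infty$-grading argument.
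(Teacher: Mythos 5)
Your proposal takes the same route as the paper: derive the wall-crossing identities for $\mb{T}$ from Theorem~\ref{T:WC} together with the torsion-pair factorization $\mb{T}=\mb{V}_0^{-1}\mb{V}$, then extract the $\infty$-degree-$1$ component to pass to $\T$ (a step the paper dispatches with ``clearly''). One correction to your justification of the grading preservation: the equality $\phi_k(e_\infty)=e_\infty'$ only records the image of the basis vector $e_\infty$ and is not the relevant condition; what you actually need is that the $e_\infty$-coefficient of $\phi_k(e_k)=-e_k+\sum_{ha=k}e_{ta}$ (resp.\ of $\phi_k^\vee(e_k)=-e_k+\sum_{tb=k}e_{hb}$) vanishes, i.e.\ that there is no arrow $\infty\to k$ (resp.\ $k\to\infty$) — your parenthetical ``$k$ is not adjacent to $\infty$'' is the right hypothesis, while the ``more simply'' shortcut is not equivalent to it.
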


In the next section, by abuse of notation, we will write $\mu_k$ for the operator $\Ad^{-1}(\mb{E}_k')\circ\Phi_k$, and $\mu_k^\vee$ for the operator $\Phi_k^\vee\circ\Ad(\mb{E}_k)$.

\section{Application to Cluster Algebras} \label{S:cluster}

Let $B$ be an $n\times m$ matrix with $n\leq m$. The principal part $B_{\op{p}}$ of $B$ is by definition the left $n\times n$ submatrix.  
We assume that $B_{\op{p}}$ is skew-symmetric.
Let $\Lambda$ be another skew-symmetric matrices of size $m\times m$.  
We assume that $\Lambda$ and $B$ are {\em unitally compatible}, that is, ${B} \Lambda = (-I_n,0)$.

We can associate to $B$ an (ice) quiver $Q$ without loops and 2-cycles satisfying \eqref{eq:bij}.
The vertices in $[n+1, m]$ are {\em frozen vertices}. We denote by $Q_{\op{p}}$ {\em the principal part} of $Q$, that is, the full subquiver of $Q$ by forgetting all frozen vertices.
The matrix $B$ is called the {\em $B$-matrix} of $Q$.

Let $X_\Lambda$ be the quantum Laurent polynomial ring $\mb{Z}[q^{\pm\frac{1}{2}}][x_1^{\pm 1},x_2^{\pm 1},\cdots,x_m^{\pm 1}]$ with multiplication given by
\begin{equation} \label{eq:QLPA} x^\alpha x^\beta = q^{\frac{1}{2}\Lambda(\alpha,\beta)} x^{\alpha+\beta}.
\end{equation}
Here, we write $\Lambda(-,-)$ for the associated bilinear form of $\Lambda$.
As an Ore domain \cite[Appendix]{BZ}, $X_\Lambda$ is contained in its skew-field of fractions $\mc{F}(X_\Lambda)$.

\begin{definition} A toric frame is a map $X:\mb{Z}^m\to \mc{F}(X_\Lambda)$, such that $X(\alpha)=\rho(x^{\eta(\alpha)})$ for some automorphism $\rho$ of the skew-field $\mc{F}(X_\Lambda)$, and some automorphism $\eta$ of the lattice $\mb{Z}^m$.
\end{definition}


By abuse of notation we can view $X_\Lambda$ naturally as the toric frame: $X_\Lambda(\alpha)=x^\alpha$. Let $\{e_i\}_{1\leq i\leq m}$ be the standard basis of $\mb{Z}^m$. We also denote by $\phi_k$ the matrix of the linear isometry \eqref{eq:lineariso}, and by $\phi_k^{\op{p}}$ its restriction on the principal part $Q_{\op{p}}$. For any integer $b$, we write $[b]_+$ for $\max(0,b)$.

\begin{definition} A {\em seed} is a triple $(\Lambda,B,X)$ such that $X(g)X(h)=q^{\frac{1}{2}\Lambda(\alpha,\beta)}X(g+h)$ for all $g,h\in\mb{Z}^m$.
The {\em mutation} of $(\Lambda,{B},X)$ at $k$ is a new triple $(\Lambda',{B}',X')=\mu_k(\Lambda,{B},X)$ defined by
\begin{align}
\label{eq:mu} (\Lambda',{B}')&=({\phi}_k^\T \Lambda {\phi}_k, \phi_k^{\op{p}} {B} {\phi}_k^\T),
\intertext{and $X'$ is detetmined by the following {\em exchange relation}}
\label{eq:mu1} X'(e_k) &= X\big(\sum_{1\leq j \leq m}[-b_{kj}]_+ e_j - e_k \big) + X\big(\sum_{1\leq j \leq m}[b_{kj}]_+ e_j - e_k \big),  \\
\label{eq:mu2} X'(e_j) &= X(e_j) \qquad 1\leq j \leq m, j\neq k.
\end{align}
\end{definition}

Since $\phi_k=\phi_k^{-1}$, we see that $(\Lambda',B')$ is also unitally compatible. The automorphism $\rho$ for $X'$ was constructed explicitly in \cite[Proposition 4.2]{BZ}. One should notice that the mutation $\mu_k$ is an involution.

Let $\mb{T}_n$ be the {\em $n$-regular tree} with root $t_0$. There is a unique way of associating a seed $(\Lambda_t,{B}_t,X_t)$ for each vertex $t\in\mb{T}_n$ such that
\begin{enumerate} \item $(\Lambda_{t_0},{B}_{t_0},X_{t_0})=(\Lambda,{B},X_\Lambda)$,
\item if $t$ and $t'$ are linked by an edge $k$, then the seed $(\Lambda_{t'},{B}_{t'},X_{t'})$ is obtained from $(\Lambda_{t},{B}_{t},X_{t})$ by the mutation at $k$.
\end{enumerate}

\begin{definition} The quantum cluster algebra $\mc{C}(\Lambda,{B})$ with initial seeds $(\Lambda,{B},X_\Lambda)$ is the $\mb{Z}[q^{\pm 1}]$-subalgebra of $\mc{F}(X_\Lambda)$ generated by all {\em cluster variables} $X_t(e_i)\ (1\leq i\leq n)$, {\em coefficients} $X_t(e_i)$ and $X_t(e_i)^{-1}\ (n+1\leq i\leq m)$.
\end{definition}

Recall the operators $\mu_k=\Ad^{-1}(\mb{E}_k')\circ\Phi_k$ and $\mu_k^\vee=\Phi_k^\vee\circ\Ad(\mb{E}_k)$.
Moreover $\mb{E}(y)=\exp_q\Big(\frac{q^{1/2}}{q-1}y\Big)$ can also be written as the formal product 
\footnote{The inverse of this product is the $q$-Pochhammer symbol $(-q^{\frac{1}{2}}y;q)_\infty$.}
$$\prod_{l=0}^\infty (1+q^{l+\frac{1}{2}}y)^{-1}.$$
It satisfies
$$\mb{E}(q^{\pm 1}y)=(1+q^{\pm \frac{1}{2}} y)^{\pm 1}\mb{E}(y).$$

Let $Y_{(Q_{\op{p}})}$ be the quantum Laurent polynomial (rather than Laurent series) algebra in variables $\{y_i\}_{i\in Q_0}$ having the same multiplication rule as $X_{(Q_{\op{p}})}$, that is, $y^\beta y^\gamma = q^{-\frac{1}{2}(\beta,\gamma)} y^{\beta+\gamma}$. 
From the fact that 
$$y_i\mb{E}(y_k) = \mb{E}(q^{b_{ki}} y_k)y_i\ \text{ and }\ \mb{E}(y_k)^{-1}\mb{E}(q^by_k)=\prod_{l=1}^b (1+q^{l-\frac{1}{2}}y_k),$$
we can easily deduce the following $Y$-seeds mutation formula

\begin{lemma}[{\cite[(4.11)]{Ke2}}] \label{L:muyi}  $$\mu_k^\vee(y_i)=\mu_k(y_i)=\begin{cases}y_k^{-1} & (i=k), \\
\displaystyle y^{e_i+[-b_{ik}]_+e_k}\prod_{l=1}^{|b_{ik}|}(1+q^{\sgn(b_{ik})(l-\frac{1}{2})}y_k)^{\sgn(b_{ik})} & \text{otherwise}.
\end{cases}$$
\end{lemma}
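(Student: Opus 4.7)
The plan is to verify the claim by direct computation from the definitions $\mu_k = \Ad^{-1}(\mb{E}_k')\circ\Phi_k$ and $\mu_k^\vee = \Phi_k^\vee\circ\Ad(\mb{E}_k)$, using the three identities displayed just before the lemma --- the shift identity $\mb{E}(q^{\pm 1}y)=(1+q^{\pm 1/2}y)^{\pm 1}\mb{E}(y)$, the quasi-commutation $y_i\mb{E}(y_k)=\mb{E}(q^{b_{ki}}y_k)y_i$, and the telescoping $\mb{E}(y_k)^{-1}\mb{E}(q^b y_k)=\prod_{l=1}^b(1+q^{l-1/2}y_k)$ --- together with the explicit form of $\phi_k$ and $\phi_k^\vee$ from \eqref{eq:lineariso} and the mutation rule $B'=\phi_k^{\op{p}}B\phi_k^\T$ recorded in Section~\ref{S:QPmu}. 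I would split into the cases $i=k$ and $i\neq k$, compute both $\mu_k(y_i)$ and $\mu_k^\vee(y_i)$ separately, and then match with the stated right-hand side.

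For $i=k$: since $y_k$ commutes with every power series in $y_k$ one has $\Ad(\mb{E}_k)(y_k)=y_k$, so $\mu_k^\vee(y_k)=\Phi_k^\vee(y_k)=(x')^{-e_k+\sum_{tb=k}e_{hb}}$, which is $y_k^{-1}$ under the identification used in the statement. For $\mu_k(y_k)=(\mb{E}_k')^{-1}\Phi_k(y_k)\mb{E}_k'$, I conjugate the monomial $(x')^{\phi_k(e_k)}$ by $\mb{E}_k'$: the general quasi-commutation $(x')^\alpha\mb{E}(y_k')=\mb{E}(q^{-(\alpha,e_k)'}y_k')(x')^\alpha$ generates dilogarithm factors that, by the shift identity, collapse to a finite product of $(1+q^{\pm 1/2}y_k')^{\pm 1}$ terms. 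Using $B'=\phi_k^{\op{p}}B\phi_k^\T$ the monomial exponents cancel against these factors to leave the claimed $y_k^{-1}$.

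For $i\neq k$: since $\phi_k(e_i)=e_i$ we have $\Phi_k(y_i)=y_i'$, so $\mu_k(y_i)=(\mb{E}_k')^{-1}y_i'\mb{E}_k'$. The quasi-commutation in $X_{(Q')}$ together with $b'_{ki}=-b_{ki}=b_{ik}$ rewrites this as $(\mb{E}_k')^{-1}\mb{E}(q^{b_{ik}}y_k')y_i'$; the telescoping identity when $b_{ik}>0$ (and its inverse, obtained from the shift identity, when $b_{ik}<0$) then converts the leading quotient of dilogarithms into $\prod_{l=1}^{|b_{ik}|}(1+q^{\sgn(b_{ik})(l-1/2)}y_k')^{\sgn(b_{ik})}$. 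Pushing the trailing $y_i'$ past these factors into normal-ordered form produces exactly the monomial prefactor $y^{e_i+[b_{ki}]_+e_k}$ on the right. The parallel computation for $\mu_k^\vee(y_i)$ begins with $\mb{E}_k y_i\mb{E}_k^{-1}$, applies the same two identities in $X_{(Q)}$, and then pushes through $\Phi_k^\vee$; by construction the two outputs have the same shape. The principal obstacle is the bookkeeping of quasi-commutation $q$-factors and sign conventions in the two parallel calculations, especially in tracking how the antisymmetric forms on $X_{(Q)}$ and $X_{(Q')}$ differ through $\phi_k$ and $\phi_k^\vee$, and in separating the sub-cases $b_{ik}>0$ and $b_{ik}<0$ cleanly so that the same normal form is reached from both sides.
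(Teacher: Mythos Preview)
Your proposal is correct and follows exactly the route the paper indicates: the paper does not give a proof but only states that the formula ``can easily be deduced'' from the quasi-commutation $y_i\mb{E}(y_k)=\mb{E}(q^{b_{ki}}y_k)y_i$ and the telescoping identity $\mb{E}(y_k)^{-1}\mb{E}(q^b y_k)=\prod_{l=1}^b(1+q^{l-1/2}y_k)$, which is precisely the computation you carry out. Your write-up is therefore a faithful expansion of the paper's one-line sketch.
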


Let $\mc{F}(Y_{(Q_{\op{p}})})$ be its skew-field of fraction of $Y_{(Q_{\op{p}})}$.
From the above formula, we see that applying a sequence of mutation operators to a Laurent polynomial, we get an element in $\mc{F}(Y_{(Q_{\op{p}})})$ rather than an arbitrary series.
We consider the lattice map $\mb{Z}^n\to \mb{Z}^m, \beta \mapsto \beta B$. This map induces an operator $$\op{b}:Y_{(Q_{\op{p}})} \to X_\Lambda,\ y^\beta\mapsto x^{\beta B}.$$ 
By the unital compatibility of $\Lambda$ and ${B}$, we have that $\alpha B \beta^\T=\Lambda(\alpha{B},\beta {B})$. So we conclude that
\begin{lemma} \label{L:bhomo} The operator $\op{b}$ is an algebra homomorphism, and thus induces a skew-field homomorphism $\op{b}:\mc{F}(Y_{(Q_{\op{p}})}) \to \mc{F}(X_\Lambda)$.
\end{lemma}

Let $\bold{k}_s:=(k_1,k_2,\dots,k_s)$ be a sequence of edges connecting $t_0$ and $t_s$.
We write $\mu_{\bold{k}_s}$ for the sequence of mutation $\mu_{k_s}\cdots\mu_{k_2}\mu_{k_1}$.
For simplicity, we write $B_{r}$ for $B_{t_r}$ and $X_{r}$ for $X_{t_r}$.
The next lemma says that the operator $\op{b}$ is compatible with mutations.
\begin{lemma} \label{L:muxy} $\op{b}\circ\mu_{\bold{k}_s}^{-1}(y^{\beta})=X_{s}(\beta{B}_s)$ for any $\beta\in\mb{Z}^n$.
\end{lemma}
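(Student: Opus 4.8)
The plan is to prove Lemma \ref{L:muxy} by induction on the length $s$ of the mutation sequence $\bold{k}_s$. The base case $s=0$ is immediate: $\op{b}(y^\beta) = x^{\beta B}$ by the very definition of the operator $\op{b}$, and $X_{t_0}(\beta B) = x^{\beta B}$ since the initial toric frame is $X_\Lambda$ and $\beta B \in \mb{Z}^m$. For the inductive step, suppose the statement holds for sequences of length $s-1$, and write $\bold{k}_s = (k_1,\dots,k_{s-1},k_s)$ with $k = k_s$. Since $\mu_{\bold{k}_s}^{-1} = \mu_{\bold{k}_{s-1}}^{-1}\circ\mu_k^{-1}$, it suffices to prove the one-step identity
$$\op{b}\circ\mu_k^{-1}(y^\beta) = \big(\op{b}\circ\mu_{\bold{k}_{s-1}}^{-1}\big)(\mu_k^{-1}(y^\beta))$$
maps correctly, i.e. reduce everything to the single mutation $\mu_k$ applied at the root. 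So the heart of the matter is the case $s=1$: to show $\op{b}\circ\mu_k^{-1}(y^\beta) = X_1(\beta B_1)$ where $B_1 = \phi_k^{\op{p}} B \phi_k^\T$ and $X_1 = \mu_k(\Lambda,B,X_\Lambda)$. Because $\mu_k$ is an involution (as noted after \eqref{eq:mu2}), this is equivalent to $\op{b}(y^\beta) = \mu_k\big(X_1(\beta B_1)\big)$, but it is cleaner to verify the identity $\op{b}\circ\mu_k^{-1} = X_1\circ(\text{multiplication by }B_1)$ directly on generators.

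The key computational step is to compare, for each standard basis vector $e_i \in \mb{Z}^n$, the two sides on $y^{e_i}$. On the Y-side, Lemma \cite[(4.11)]{Ke2} gives an explicit closed form for $\mu_k^{-1}(y_i) = \mu_k(y_i)$ (again using that $\mu_k$ is an involution): it is $y_k^{-1}$ when $i=k$, and otherwise $y^{e_i + [b_{ki}]_+ e_k}\prod_{l=1}^{|b_{ik}|}(1+q^{\sgn(b_{ik})(l-\frac12)}y_k)^{\sgn(b_{ik})}$. Applying $\op{b}$, which sends $y^\gamma$ to $x^{\gamma B}$ and is an algebra homomorphism by the preceding lemma, turns this product of $(1+q^\bullet y_k)$ factors into a product of $(1+q^\bullet x^{e_k B})$ factors times $x^{(e_i+[b_{ki}]_+e_k)B}$. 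On the cluster side, $X_1(e_i B_1)$ is governed by the exchange relations \eqref{eq:mu1}, \eqref{eq:mu2}: $X_1(e_j)=X(e_j)$ for $j\neq k$ and $X_1(e_k)$ is the binomial $X(\sum[b_{jk}]_+e_j - e_k) + X(\sum[b_{kj}]_+e_j - e_k)$, together with the explicit automorphism $\rho$ from \cite[Proposition 4.2]{BZ}. One then expands $X_1(e_i B_1)$ using the quantum commutation relations in $X_{\Lambda_1}$ and the unital compatibility $\alpha B \beta^\T = \Lambda(\alpha B, \beta B)$; the binomial in $X_1(e_k)$ produces precisely the telescoping product $\prod_{l=1}^{|b_{ik}|}(1+q^\bullet x^{e_k B})$ after using the identity $\mb{E}(y_k)^{-1}\mb{E}(q^b y_k)=\prod_{l=1}^b(1+q^{l-\frac12}y_k)$ quoted just before the Y-seed lemma. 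Matching the $q$-powers on both sides is the bookkeeping that makes the two agree; here one uses $B_1 = \phi_k^{\op{p}} B\phi_k^\T$, $\Lambda_1 = \phi_k^\T\Lambda\phi_k$, and $\alpha B\beta^\T = \Lambda(\alpha B,\beta B)$ repeatedly.

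With the generators handled, one extends to arbitrary $\beta\in\mb{Z}^n$ multiplicatively: both $\op{b}\circ\mu_k^{-1}$ and $\gamma\mapsto X_1(\gamma B_1)$ are (up to the appropriate $q$-twist) compatible with the lattice addition, since $\op{b}$ is an algebra homomorphism, $\mu_k^{-1}$ is induced by a toric-frame automorphism hence multiplicative up to $q$-powers, and $X_1$ satisfies $X_1(g)X_1(h)=q^{\frac12\Lambda_1(g,h)}X_1(g+h)$ by the seed axiom. So it is enough to check that the $q$-twisting cocycles agree, which again comes down to $\Lambda_1(e_i B_1, e_j B_1) = e_i B e_j^\T = e_j B e_i^\T$-type identities provided by unital compatibility. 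Finally one assembles the induction: applying the already-established $s=1$ case with the seed at $t_{s-1}$ in place of the root (legitimate because $(\Lambda_{s-1},B_{s-1},X_{s-1})$ is itself a seed and the operators $\mu_k$, $\op{b}$ are defined seed-theoretically), and then invoking the inductive hypothesis for $\mu_{\bold{k}_{s-1}}^{-1}$, yields $\op{b}\circ\mu_{\bold{k}_s}^{-1}(y^\beta) = X_s(\beta B_s)$.

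The main obstacle I expect is the precise reconciliation of the $q$-powers between the two descriptions — the Y-seed mutation formula of \cite{Ke2} is written in the normalization coming from $\mb{E}(y)=\exp_q(\frac{q^{1/2}}{q-1}y)$ with its commutation rule $y_i\mb{E}(y_k)=\mb{E}(q^{b_{ki}}y_k)y_i$, whereas the cluster exchange relations \eqref{eq:mu1}--\eqref{eq:mu2} and the automorphism $\rho$ of \cite{BZ} live in $X_\Lambda$ with the twist $x^\alpha x^\beta = q^{\frac12\Lambda(\alpha,\beta)}x^{\alpha+\beta}$. Threading the map $\op{b}$ correctly through both, and verifying that the half-integer shifts $l-\frac12$ in the $q$-Pochhammer expansion match the half-integer shifts hidden in $\Lambda_1$, is the delicate part; everything else is formal.
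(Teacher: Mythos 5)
Your proposal is correct and takes essentially the same route as the paper: induction on the length $s$, decomposition $\mu_{\bold{k}_s}^{-1}=\mu_{\bold{k}_{s-1}}^{-1}\circ\mu_{k_s}^{-1}$, expansion of the single-step inverse via the $Y$-seed mutation formula of \cite[(4.11)]{Ke2}, expansion of $X_s(\beta B_s)$ via the exchange relations \eqref{eq:mu1}--\eqref{eq:mu2} with the unital compatibility $\alpha B\beta^\T = \Lambda(\alpha B,\beta B)$, and reduction to generators $\beta=e_i$ by the seed axiom and the fact that $\op{b}$ is an algebra homomorphism. Your framing isolates a ``$s=1$ case with the seed at $t_{s-1}$'' whereas the paper writes out the inductive step directly, but this is the same argument; the one clarification worth making explicit is that the inductive hypothesis is precisely what identifies $\op{b}\circ\mu_{\bold{k}_{s-1}}^{-1}$ with the map $y^\gamma\mapsto X_{s-1}(\gamma B_{s-1})$, and that identification (applied as an algebra homomorphism to the non-monomial expression $\mu_{k_s}^{-1}(y^{e_i})$) is what lets the relabeled one-step computation land on $X_s(e_i B_s)$.
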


\begin{proof} Using the unital compatibility of $\Lambda$ and ${B}$, this is clearly reduced to prove for $\beta=e_i$.
We prove by induction on $s$. For $s=0$, it is trivial. Suppose that it is true for $s$, then
\begin{align*} \op{b}\circ\mu_{\bold{k}_{s+1}}^{-1}(y^{e_i})
&= \op{b}\circ\mu_{\bold{k_{s}}}^{-1}(\mu_{k_{s+1}}^{-1}y^{e_i}),\\
&\stackrel{\text{Lemma \ref{L:muyi}}}{\hbox{\equalsfill}} \op{b}\circ\mu_{\bold{k_{s}}}^{-1} \Big(y^{e_i+[b_{ki}^{s+1}]_+e_k} \prod_{l=1}^{|b_{ik}^{s+1}|} \big(1+q^{\sgn(b_{ik}^{s+1})(l-\frac{1}{2})}y^{e_k}\big)^{\sgn(b_{ik}^{s+1})}\Big),\\
&\stackrel{\text{Lemma \ref{L:muxy}}}{\hbox{\equalsfill}} X_{s}(e_iB_{s}+[b_{ik}^{s}]_+e_kB_{s}) \prod_{l=1}^{|b_{ki}^{s}|} \big(1+q^{\sgn(b_{ki}^{s})(l-\frac{1}{2})}X_{s}(e_kB_{s})\big)^{\sgn(b_{ki}^{s})}.
\intertext{On the other hand,}
X_{s+1}(e_iB_{s+1}) &= X_{s+1}\big(\sum_j b_{ij}^{s+1} e_j \big),\\
&= q^{\frac{1}{2}\Lambda_{s+1}(e_iB_{s+1}-b_{ik}^s e_k,b_{ik}^s e_k)}X_{s+1}\big(\sum_{j\neq k} b_{ij}^{s+1} e_j \big) X_{s+1}(b_{ik}^{s+1} e_k),\\
&= X_{s}\big(\sum_{j\neq k} b_{ij}^{s+1} e_j \big) \cdot \Big(X_{s}\big(\sum_{1\leq j \leq m} [b_{-kj}^{s}]_+e_j-e_k \big) + X_{s}\big(\sum_{1\leq j \leq m} [b_{kj}^{s}]_+ e_j-e_k\big)\Big)^{b_{ki}^{s}},\\
&= X_{s}\big(\sum_{j\neq k} b_{ij}^{s+1} e_j \big) \cdot \Big(X_{s}\big(\sum_{1\leq j \leq m} [b_{-kj}^{s}]_+e_j-e_k \big) \big(1 + q^{\frac{1}{2}}X_{s}(e_kB_{s})\big)\Big)^{b_{ki}^{s}},\\
&= X_{s}\big(\sum_{j\neq k} (b_{ij}^{s}+[b_{ik}^{s}]_+b_{kj}^{s} - b_{ki}^{s}[-b_{kj}^{s}]_+)e_j\big) \cdot X_{s}\big(b_{ki}^{s}\big(\sum_{1\leq j \leq m} [-b_{kj}^{s}]_+e_j-e_k\big)\big)\cdot \\
& \qquad \prod_{l=1}^{|b_{ki}^{s}|} \big(1+q^{\sgn(b_{ki}^{s})(l-\frac{1}{2})}X_{s}(e_k B_s)\big)^{\sgn(b_{ki}^{s})},\\
&= X_{s}(e_iB_{s}+[b_{ik}^{s}]_+e_kB_{s}) \prod_{l=1}^{|b_{ki}^{s}|} \big(1+q^{\sgn(b_{ki}^{s})(l-\frac{1}{2})}X_{s}(e_kB_{s})\big)^{\sgn(b_{ki}^s)}.
\end{align*}
\end{proof}

For any $t\in \mb{T}_n$, there is a unique sequence of edges $\bold{k}_t$ connecting $t_0$ and $t$.
Let $W$ be some {\em non-degenerate} (\cite[Definition 7.2, Proposition 7.3]{DWZ1}) potential of $Q$, and set $(Q_t,W_t)=\mu_{\bold{k}_t}({Q},W)$.
We shall assume the following condition for $W$:
\begin{equation} \label{Ass_potential}
\text{For any $t\in \mb{T}_n$, and any $k\in Q_t$, the assumption $\smiley$ holds for $({Q}_t,W_t)$.}
\end{equation}
We do not know if such a potential exists for any quiver without loops or $2$-cycles.

To give another definition of $X_t(g)$ for $g\in\mb{Z}_{\geq 0}^m$, we consider the extended QP $(Q_t^g,W_t)$ from $(Q_t,W_t)$ by adding a new vertex $\infty$ and $g_i$ new arrows from $i$ to $\infty$. 
We apply the inverse of $\mu_{\bold{k}_t}$ to $(Q_t^g,W_t)$, and obtain a QP $(Q^g,W^g):=\mu_{\bold{k}_t}^{-1}(Q_t^g,W_t)$. Let $\omega^g$ be the trace function corresponding to the potential $W^g$.

%
%

We freeze the same set of vertices of $Q^g$ as that of $Q$. Although the extended vertex $\infty$ is not frozen, we will never perform mutation at $\infty$ henceforth.
Let $\wtd{B}$ be the $B$-matrix of $Q^g$. Note that $Q$ is the full subquiver of $Q^g$ without the vertex $\infty$ so $\wtd{B}$ is obtained from $B$ by adjointing a row corresponding to the vertex $\infty$.

\begin{definition} \label{D:cluster} For any $g\in\mb{Z}_{\geq 0}^m$, we define $X_t(g)=\op{b}\big( \T(\mu_{\bold{k}_t}^{-1}(Q_t^{g},W_t)) \big),$ where $\T(Q,W)$ is defined in \eqref{eq:TQW}. 
\end{definition}

By Theorem \ref{T:FWC}, we have that
\begin{equation}\label{eq:mut} \T(\mu_{\bold{k}_t}^{-1}(Q_t^{g},W_t))=\mu_{\bold{k}_t}^{-1} \Big((q^{1/2}-q^{-1/2})\frac{y_\infty}{q^{1/2}-q^{-1/2}}\Big)
=\mu_{\bold{k}_t}^{-1}(y_\infty).
\end{equation}


\begin{theorem} \label{T:cluster}
Under the assumption \eqref{Ass_potential},
definition \ref{D:cluster} defines the quantum cluster algebra $\mc{C}(\Lambda,{B})$.
In particular, we have the following {\em quantum cluster character}
$$X_t(g)=\sum_\beta \left|\varphi_{\omega^g}(\Mod_{(1,\beta)}^{\nu_\infty}(Q^g))\right|_\vir\ x^{(1,\beta)\wtd{B}}.$$
\end{theorem}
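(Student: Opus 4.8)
The plan is to verify that the assignment $t\mapsto(\Lambda_t,B_t,X_t)$ with $X_t$ defined through Definition \ref{D:cluster} satisfies the two defining properties of the seed pattern attached to $(\Lambda,B)$, namely that it agrees with the initial seed at $t_0$ and that it transforms correctly under a single mutation. Once this is done, uniqueness of the seed pattern on $\mb{T}_n$ forces $X_t(g)=X_t(g)$ in the sense of the quantum cluster algebra, and the explicit cluster-character formula then falls out of the identity $\T(\mu_{\bold{k}_t}^{-1}(Q_t^g,W_t))=\mu_{\bold{k}_t}^{-1}(y_\infty)$ noted just before the theorem together with the expansion of $\T(Q^g,W^g)$ as $\sum_\beta|\varphi_{\omega^g}(\Mod_{(1,\beta)}^{\mu_\infty}(Q^g))|_\vir\,x^{(1,\beta)}$ and the homomorphism $\op{b}$, which sends $x^{(1,\beta)}$ to $x^{(1,\beta)\tilde B}$. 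So the core of the argument is really the compatibility of $\op{b}$, the operators $\mu_k,\mu_k^\vee$, and the exchange relations.

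First I would unwind Definition \ref{D:cluster}: for $g=e_i$ with $1\le i\le m$ we have $\T(\mu_{\bold{k}_t}^{-1}(Q_t^{e_i},W_t))=\mu_{\bold{k}_t}^{-1}(y_\infty)$, where $y_\infty$ is the $Y$-variable at the framing vertex and the framing data records exactly the $g$-vector. Applying $\op{b}$ and Lemma \ref{L:muxy} (with $\beta=e_i$, reading off that $\op{b}\circ\mu_{\bold{k}_t}^{-1}(y^{e_i})=X_t(e_iB_t)$) identifies $X_t(e_i)$ with the correct cluster/coefficient variable in the sense of the $X$-seed pattern; the general $g\in\mb{Z}_{\ge0}^m$ then follows because $\op{b}$ is an algebra homomorphism (Lemma on $\op{b}$) and the toric-frame relation $X(g)X(h)=q^{\frac12\Lambda(\alpha,\beta)}X(g+h)$ is respected by the unital compatibility $B\Lambda=[-I_n,0]$, precisely the computation $\alpha B\beta^\T=\Lambda(\alpha B,\beta B)$. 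The base case $t=t_0$ is immediate since $\mu_{\bold{k}_{t_0}}$ is empty. For the inductive step I would combine Lemma \ref{L:muxy} with the $Y$-seed mutation formula of \cite{Ke2} (the displayed Lemma giving $\mu_k^\vee(y_i)=\mu_k(y_i)$) and the explicit matrix mutation $(\Lambda',B')=(\phi_k^\T\Lambda\phi_k,\phi_k^{\op{p}}B\phi_k^\T)$ from \eqref{eq:mu}; pushing the $Y$-seed mutation through $\op{b}$ and matching it term by term against the exchange relations \eqref{eq:mu1}–\eqref{eq:mu2} is exactly the chain of equalities already carried out in the proof of Lemma \ref{L:muxy}, so the seed $(\Lambda_t,B_t,X_t)$ really does mutate as a seed.

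It remains to produce the cluster-character expansion. Here I would invoke Theorem \ref{T:FWC}: $\T$ is natural with respect to mutation, so $\T(Q^g,W^g)$ is obtained from $\T(Q_t^g,W_t)$ by the operator $\mu_{\bold{k}_t}^{-1}$, and on the framed side $\T(Q_t^g,W_t)=(q^{1/2}-q^{-1/2})\cdot y_\infty/(q^{1/2}-q^{-1/2})=y_\infty$. On the other hand, by the definition of $\T(Q,W)$ as $(q^{1/2}-q^{-1/2})\int_\omega\chi(\T_0^1(Q^g))$ and the identification of $\T_0^1$ with $\mu_\infty$-stable modules of dimension $1$ at $\infty$ (coprimality of $(1,\beta)$ to $\mu_\infty$, so the GIT quotient is geometric and the vanishing cycle descends by \cite[Theorem 4.3]{M2}), we get $\T(Q^g,W^g)=\sum_\beta|\varphi_{\omega^g}(\Mod_{(1,\beta)}^{\mu_\infty}(Q^g))|_\vir\,x^{(1,\beta)}$. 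Applying $\op{b}$, which sends $x^{(1,\beta)}$ to $x^{(1,\beta)\tilde B}$ where $\tilde B$ is the extended $B$-matrix of $Q^g$, yields the stated formula, and combining with the seed-identification above gives $X_t(g)=\mu_{\bold{k}_t}(x^g)$, i.e.\ the mutated cluster monomial. The main obstacle is the inductive verification that $\op{b}\circ\mu_{\bold{k}_t}^{-1}$ intertwines the $Y$-seed mutation with the $X$-seed exchange relation uniformly in the extended (framed) $B$-matrix $\tilde B$ — one has to be careful that adding the frozen vertex $\infty$ and the $g_i$ arrows $i\to\infty$ does not disturb the assumption $\smiley$ at any $k\in Q_t$ (it does not, since $\infty$ is a sink and is never mutated, so the relevant cuts and tilting equivalences are unaffected), and that the bookkeeping of signs and $q$-powers in $\Lambda_t$ versus $\tilde B_t$ matches the one already done in Lemma \ref{L:muxy}; everything else is assembling results already in hand.
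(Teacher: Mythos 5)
Your proposal follows the same route as the paper: define $X_t(g)$ via $\op{b}\circ\mu_{\bold{k}_t}^{-1}(y_\infty)$, invoke (an extended version of) Lemma \ref{L:muxy} to identify this with the seed-pattern value $\tilde X_t(e_\infty\tilde B_t)$, and read off the cluster-character expansion from $\T(Q^g,W^g)=\sum_\beta|\varphi_{\omega^g}(\Mod_{(1,\beta)}^{\mu_\infty}(Q^g))|_\vir\,x^{(1,\beta)}$ together with $\op{b}$. Your outline is correct in structure; the one place where the paper is more precise is the step you flag as ``bookkeeping'': Lemma \ref{L:muxy} is stated under the hypothesis $B\Lambda=[-I_n,0]$, and once the framing vertex $\infty$ is added this hypothesis \emph{fails} for $(\tilde B,\tilde\Lambda)$ with $\tilde\Lambda=\sm{0&0\\0&\Lambda}$ --- one gets $\tilde B\tilde\Lambda=\sm{0&0&0\\0&-I_n&0}$ rather than $[-I_{n+1},0]$. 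The paper observes that compatibility holds ``on the principal part'' and that this weaker property is preserved under mutation at any $k\neq\infty$, which is all the inductive computation in Lemma \ref{L:muxy} actually uses (since the induction only mutates at non-framing vertices); your writeup should make this substitution explicit rather than deferring to sign/power matching, because the literal hypotheses of Lemma \ref{L:muxy} are violated. The side remark you add about $\smiley$ being undisturbed by the framing is a reasonable sanity check, though the paper takes this for granted since the hypothesis on $W$ already posits $\smiley$ for all $(Q_t,W_t)$.
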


\begin{proof} 
We trivially extend $\Lambda$ to $\wtd{\Lambda}:=\sm{0& 0\\0& \Lambda}$, and thus we have the natural embedding $X_{\Lambda}\hookrightarrow X_{\wtd{\Lambda}}$. Note that $\wtd{B}\wtd{\Lambda}=\sm{0 & 0 & 0\\ 0 & -I & 0}$, so they are {\em not} unitally compatible.
We say that they are unitally compatible on the principal part.
However, it still makes perfect sense if we define $(\wtd{B}',\wtd{\Lambda}')$ by \eqref{eq:mu} and $\wtd{X}'(e_i)$ by the relation \eqref{eq:mu1}--\eqref{eq:mu2}.
Clearly, for any $k\neq \infty$,
$(\wtd{B}',\wtd{\Lambda}')$ is also unitally compatible on the principal part. This is all we need for the following analogue \eqref{eq:muxy} of Lemma \ref{L:muxy} to hold:
For each $t\in \mb{T}_n$, we associate as before $\wtd{X}_t$, then
\begin{equation}\label{eq:muxy}
\op{b}\circ\mu_{\bold{k}_t}^{-1}(y^{\beta})=\wtd{X}_{t}(\beta \wtd{B}_t).\end{equation}
Moreover, $\wtd{\Lambda}'$ extends $\Lambda'$ in the same way: $\wtd{\Lambda}':=\sm{0& 0\\0& \Lambda'}$ so that we have the natural embedding $X_t(\mb{Z}^{m})\hookrightarrow\wtd{X}_t(\mb{Z}^{m+1})$ for each $t\in\mb{T}_n$.
Hence, $$\op{b}\circ\mu_{\bold{k}_t}^{-1}(y_\infty)=\wtd{X}_{t}(e_\infty\wtd{B}_t)=X_t(g).$$
The last statement on the explicit formula of $X_t(g)$ follows from \eqref{eq:mut}, \eqref{eq:T}, and Lemma \ref{L:bhomo}.
\end{proof}

\begin{remark} We can view $(1,\beta)\wtd{B}$ as $\beta B - g_t$, where $g_t$ is the {\em extended $g$-vector} corresponding to the mutated cluster monomial.
\end{remark}

\begin{example} \label{ex:A3} Consider the quiver
$$\ctrianglcoext{a}{b}{c}$$ with potential $abc$.
We perform a sequence of mutations $\{1,2,3,1\}$, and obtain the quiver
$$\ctrianglext{a}{b}{c}$$ with the same potential. We choose $c$ as the cut.
It is easy to count the vanishing cycles for each dimension vector. For example, for $\beta=(1,1,1)$,
$$|\varphi_{\omega^g}(\Mod_{(1,\beta)}^{\nu_\infty}(Q^g))|=q(2q+2).$$
Note that $2q+2$ counts neither the representation Grassmannian of $P_1\oplus P_2\oplus P_3$ of the Jacobian algebra\\
\begin{center}
$\ctriangle{1}{2}{3}{}{}{}$ nor that of the algebra
$\ctrianglecut{1}{2}{3}{}{}{}.$\end{center}
Here, the dotted line between two arrows means a relation given by the vanishing of composition.
In particular, the condition `numb' cannot be removed from Lemma \ref{L:VtoRmu}.
\end{example}


%

\section{Application: Representation Grassmannians and Reflections} \label{S:QGrass}
Let $s$ be a sink of $Q$, and $M$ be a representation of $Q$. We assume that $M$ does not contain the simple representation $S_s$ as a direct summand. Let
$$\T(M):=\sum q^{-\frac{1}{2}\innerprod{\br{M}-\beta,\beta}_Q} {|\Gr^\beta(M)|} x^{(1,\beta)}.$$
We want to compare $\T(M)$ with $\T(\mu_s(M))$.

We say an algebra $A$ is {\em extended from $Q$ by $M$} if $A=KQ[M]:=\sm{KQ & 0\\ M & K}$.
This is an algebra of global dimension two, so we can complete it to a QP $(Q_A,W_A)$ with a cut $C$ such that $J(Q_A,W_A;C)=A$ (see Section \ref{S:QPC}). We freeze the extended vertex $\infty$ of $A$, then

\begin{lemma} $\T(Q_A,W_A)=\T(M)$.
\end{lemma}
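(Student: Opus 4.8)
The plan is to compare the two series coefficient by coefficient in $x^{(1,\beta)}$. By the computation of the framed series in Section~\ref{S:WC} (together with the fact that $\chi(\T_0^1(\br Q))=\sum_\beta\chi_{(1,\beta)}^{\mu_\infty}$ for the framing stability $\mu_\infty=e_\infty^\ast/d$ at the extension vertex $\infty$), the coefficient of $x^{(1,\beta)}$ in $\T(\br Q,W)$ is $(q^{1/2}-q^{-1/2})\,v_{(1,\beta)}^{\mu_\infty}$, with $v_{(1,\beta)}^{\mu_\infty}$ as in Proposition~\ref{C:int1}. Writing $A:=J(\br Q,W;C)=kQ[M]$, the idea is to evaluate $v_{(1,\beta)}^{\mu_\infty}$ via the dimension-reduction Lemma~\ref{L:VtoRmu} and to recognise the resulting count as the Grassmannian of $\beta$-dimensional quotients of $M$.

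First I would check that Lemma~\ref{L:VtoRmu} applies, i.e.\ that $((1,\beta),\mu_\infty)$ is numb to $C$. By construction $C=\{c_i\colon h(r_i)\to t(r_i)\}$ for a minimal set $\{r_i\}$ of relations presenting $A$. The quiver of $A=kQ[M]$ has no arrow into $\infty$ and $kQ$ is hereditary, so every minimal relation of $A$ has source $\infty$ and target in $Q_0$; hence every arrow of $C$ has head $\infty$. By the observation at the end of Section~\ref{S:Mozgovoy}, $((1,\beta),\mu_\infty)$ is then numb to $C$, so Lemma~\ref{L:VtoRmu} gives $v_{(1,\beta)}^{\mu_\infty}=q^{\frac12\innerprod{(1,\beta),(1,\beta)}_{J_C}}\,|\Rep_{(1,\beta)}^{\mu_\infty}(A)|/|\GL_{(1,\beta)}|$.

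Next I would identify $\Mod_{(1,\beta)}^{\mu_\infty}(A)$ with $\Gr^\beta(M)$. Via the triple description of modules over $A=\sm{kQ&0\\M&k}$, an $A$-module of dimension $(1,\beta)$ is the datum of a $kQ$-representation $X$ with $\br X=\beta$ and a $kQ$-morphism $\alpha\colon M\to X$, the submodule generated by the copy of $S_\infty$ being $(\operatorname{im}\alpha,k,\alpha)$. Since $(1,\beta)$ is coprime to $\mu_\infty$, a short slope computation shows that such a module is $\mu_\infty$-(semi)stable exactly when it has no proper submodule of dimension one at $\infty$, i.e.\ exactly when $\alpha$ is surjective; then $\operatorname{End}=k$ and the $\GL_{(1,\beta)}$-stabiliser is a diagonal copy of $k^\ast$. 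Quotienting the surjections $M\twoheadrightarrow X$ by $\GL_\beta\times k^\ast$ remembers precisely $\ker\alpha$, so $\Mod_{(1,\beta)}^{\mu_\infty}(A)\cong\Gr^\beta(M)$ as varieties; being coprime this is a geometric quotient, whence $|\Rep_{(1,\beta)}^{\mu_\infty}(A)|=\bigl(|\GL_{(1,\beta)}|/(q-1)\bigr)\,|\Gr^\beta(M)|$. Substituting back and using $(q^{1/2}-q^{-1/2})/(q-1)=q^{-1/2}$, the coefficient of $x^{(1,\beta)}$ in $\T(\br Q,W)$ becomes $q^{\frac12(\innerprod{(1,\beta),(1,\beta)}_{J_C}-1)}\,|\Gr^\beta(M)|$.

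It remains to match the $q$-exponent, i.e.\ to show $\innerprod{(1,\beta),(1,\beta)}_{J_C}-1=-\innerprod{\br M-\beta,\beta}_Q$. Since $C$ is algebraic, $\innerprod{-,-}_{J_C}$ is the Euler form of $A=kQ[M]$; computing it on simples — the $Q_0$-block is the Euler form of $kQ$, $S_\infty$ is injective, and $\Ext^\bullet_A(S_\infty,S_j)$ is read from $0\to M\to P_\infty\to S_\infty\to0$ — yields $\innerprod{(1,\gamma),(1,\gamma)}_{J_C}=1+\innerprod{\gamma,\gamma}_Q-\innerprod{\br M,\gamma}_Q$, and $\gamma=\beta$ gives the identity. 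Thus the coefficient of $x^{(1,\beta)}$ in $\T(\br Q,W)$ is $q^{-\frac12\innerprod{\br M-\beta,\beta}_Q}|\Gr^\beta(M)|$, which is its coefficient in $\T(M)$. I expect the delicate points to be only the stability analysis of the third step — matching $\mu_\infty$-stable $(1,\beta)$-modules with kernels of quotients of $M$ — and the numbness check; the Euler-form and $\GL$-bookkeeping is routine.
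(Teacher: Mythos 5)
Your proposal is correct and follows essentially the same route as the paper: apply the dimension-reduction Lemma~\ref{L:VtoRmu} (justified because all arrows of $C$ end in $\infty$, hence $((1,\beta),\mu_\infty)$ is numb to $C$), identify $\Rep_{(1,\beta)}^{\mu_\infty}(kQ[M])$ with the variety of surjections $M\twoheadrightarrow N$ so that the $\GL_\beta$-quotient is $\Gr^\beta(M)$, and match the $q$-exponent via $\innerprod{(1,\beta),(1,\beta)}_{J_C}=1-\innerprod{\br{M}-\beta,\beta}_Q$. You simply spell out the numbness check, the stability analysis, and the Euler-form computation a bit more explicitly than the paper does, but the argument is the same.
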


\begin{proof} Since all arrows in $C$ end in $\infty$, by Lemma \ref{L:VtoRmu},
$$\T(Q_A,W_A)=\int_\omega \chi(\T_0^1(Q_A))=(q^{\frac{1}{2}}-q^{-\frac{1}{2}})\sum_\alpha q^{\frac{1}{2}\innerprod{(1,\beta),(1,\beta)}_{Q[M]}}\frac{|\Rep_{(1,\beta)}^{\nu_\infty}(Q[M])|}{|\GL_{(1,\beta)}|}x^{(1,\beta)},$$
where $\innerprod{-,-}_{Q[M]}$ is the Euler form of $KQ[M]$, and $\nu_\infty$ is the framing stability.
$\Rep_{(1,\beta)}^{\nu_\infty}(Q[M])$ can be identified with 
$$\{(N,f)\in \Rep_\beta(Q)\times\Hom(M,K^\beta)\mid f\in\Hom_Q(M,N) \text{ is surjective}\}.$$
So the quotient $\Rep_{(1,\beta)}^{\nu_\infty}(Q[M])/\GL_\beta$ is the representation Grassmannian $\Gr^\beta(M)$, and thus 
\begin{align*}\T(Q_A,W_A) &=(q^{\frac{1}{2}}-q^{-\frac{1}{2}})\sum_\beta q^{\frac{1}{2}}q^{-\frac{1}{2}\innerprod{\br{M}-\beta,\beta}_Q}\frac{|\Rep_\alpha^{\nu_\infty}(Q[M])|}{|\GL_\beta||\GL_1|}x^{(1,\beta)},\\
&=\sum_{\beta} q^{-\frac{1}{2}\innerprod{\br{M}-\beta,\beta}_Q}|\Gr^\beta(M)| x^{(1,\beta)}.
\end{align*}
\end{proof}

\begin{remark} The analogous statement does not hold for quivers with potentials in general (see Example \ref{ex:A3}).
\end{remark}

For any $s\in Q_0$ (not necessarily a sink), the cut $C$ of $Q_A$ satisfies the conditions in Corollary \ref{C:tilting}, so we get another algebra $A'=\End_A(T)=J(Q_A',W_A';C')$.

\begin{lemma} The algebra $A'$ is extended from $Q':=\mu_s(Q)$ by $M':=\mu_s(M)$.
\end{lemma}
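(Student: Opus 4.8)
The plan is to unwind the construction of the algebra $A' = \End_A(T) = J(\br Q',W';C')$ and identify it with the triangular matrix algebra $kQ'[M']$ where $Q' = \mu_s(Q)$ and $M' = \mu_s(M)$. First I would recall that $A = kQ[M] = \sm{kQ & 0 \\ M & k}$ is a triangular matrix algebra, hence has global dimension two, and that $J(\br Q,W;C) \cong A$ as in Proposition~2.6, where $\br Q$ is obtained from $Q$ together with the extra vertex $\infty$ and $g_i$ arrows $i \to \infty$ (encoding $M$) by adjoining a cut arrow $c_i$ for each minimal relation $r_i$ of $A$. Since $s$ is a vertex of $Q$ (not $\infty$) and, by hypothesis, all cut arrows of $C$ terminate at $\infty$, no cut arrow ends at $s$, so assumption \eqref{A:cut} of Section~\ref{S:QPmu} is satisfied at $k = s$; Corollary~\ref{C:tilting} then applies and produces the tilting module $T = kA/P_s \oplus \tau^{-1}S_s$ with $A' = \End_A(T) = J(\br Q',W';C')$ where $(\br Q',W',C') = \mu_s(\br Q,W,C)$.

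The heart of the argument is then a purely combinatorial comparison: on one side, mutating the QP-with-cut $(\br Q, W, C)$ at $s$ via the DWZ recipe of Section~\ref{S:QPmu} (premutation $\tilde\mu_s$, assigning the new cut $\tilde C$, then passing to the reduced part); on the other side, applying the BGP reflection functor at the sink $s$ to $Q$ and to the module $M$, and forming $kQ'[M']$, whose associated QP-with-cut is read off from a minimal presentation of $kQ'[M']$ as in Proposition~2.6. Because $s$ is a sink of $Q$, there are no arrows of $Q$ starting at $s$, so in the premutation the only new composite arrows $[ab]$ are those of the form $[c_i\, b]$ where $b\colon t b \to s$ is an arrow of $Q$ into $s$ and $c_i$ is a cut arrow $\infty \to \cdot$ passing through $s$ — equivalently exactly the data that the reflection functor recombines. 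I would match up: (i) the reversed arrows $b^* \colon s \to tb$ of the mutation with the arrows of $Q' = \mu_s(Q)$ out of the new source $s$; (ii) the composites $[ab]$ and the reversed cut arrows with the arrows describing $M' = \mu_s(M)$ and with the new cut $C'$; and (iii) check that the terms $b^* a^* [ab]$ added to $\tilde W$ together with $[W]$ produce, after reduction, precisely $W' = \sum c_i' r_i'$ for a minimal set of relations $r_i'$ of $kQ'[M']$. Throughout I would use Proposition~2.6(2), which guarantees that a QP-with-algebraic-cut is determined up to isomorphism by its degree-zero algebra, so it suffices to verify $J(\br Q',W';C') \cong kQ'[M']$ as algebras rather than chasing the potential explicitly.

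To carry out (iii) I would instead identify $A' = \End_A(T)$ directly. Write $A = \sm{kQ & 0 \\ M & k}$; its indecomposable projectives are $P_i = e_i A$ for $i \in Q_0$ and $P_\infty = e_\infty A$, with $P_\infty$ corresponding to $M$ viewed as a $kQ$-module sitting below the vertex $\infty$. Since $s$ is a sink of $Q$ and $M$ has no summand $S_s$, the projective $P_s$ is simple-socle and $\tau^{-1}S_s$ is the Bernstein–Gelfand–Ponomarev reflection of $S_s$; the tilting module $T = kA/P_s \oplus \tau^{-1}S_s$ is (by the standard APR-tilt description) exactly the APR tilt of $A$ at the sink $s$. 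It is classical that tilting at a sink of a quiver realizes the reflection: $\End_A(T) \cong k(\text{reflected quiver})$ with the module data reflected accordingly, i.e. $\End_A(T) \cong kQ'[M']$ with $Q' = \mu_s Q$ and $M' = \mu_s M$. I would spell this out by computing $\Hom_A(T, S_i)$ and the induced quiver of $\End_A(T)$: the vertex $s$ becomes a source, arrows $s \to j$ of $Q'$ come from paths $P_j \to \bigoplus_{b\colon tb\to s}P_{tb} \to \tau^{-1}S_s$, and the rows of the matrix part transform by the reflection on $M$. The main obstacle I expect is bookkeeping the vertex $\infty$ correctly: one must check that the reflection at $s$ does not disturb the arrows into $\infty$ except through the sink $s$, so that the "$M$-part" of $A'$ is genuinely $\mu_s M$ and not something twisted, and that the new cut $C'$ still has all its arrows ending in $\infty$ — which is needed for the subsequent application of Lemma~\ref{L:VtoRmu} to $(\br Q',W')$ in the proof that $\T(\br Q',W') = \T(\mu_s M)$.
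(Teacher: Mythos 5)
Your second line of attack---identifying $A'=\End_A(T)$ directly and showing it is a one-point coextension---is in spirit the paper's proof, but you stop at ``it is classical that tilting at a sink realizes the reflection'' and at a vague promise to compute $\Hom_A(T,S_i)$, whereas the whole content of the lemma is to make that precise. The paper does so by a block-matrix decomposition rather than by describing the quiver of $\End_A(T)$: it sets $P_{\hat\infty}:=kA/P_\infty$ and $T_{\hat\infty}:=T/P_\infty$, notes $\End_A(P_{\hat\infty})=kQ$ and $\Hom_A(P_{\hat\infty},P_\infty)=M$, and shows $T_{\hat\infty}$ equals the BB-tilting module $T'$ of $kQ$ at $s$. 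The key point here, which you should make explicit rather than bury in ``by the standard APR-tilt description,'' is that the hypothesis $S_s\nmid M$ is exactly what guarantees no arrow $\infty\to s$ in the quiver of $A$, hence that $\tau^{-1}_A S_s=\tau^{-1}_{kQ} S_s$ and $T_{\hat\infty}=T'$. With $\Hom_A(P_\infty,T_{\hat\infty})=0$, $\End_A(T)$ is then triangular with diagonal blocks $\End_A(T_{\hat\infty})=k\mu_s(Q)$ and $\End_A(P_\infty)=k$, and the off-diagonal block is identified with $\mu_s(M)$ by the tensor--Hom adjunction: $\Hom_A(T_{\hat\infty},P_\infty)=\Hom_A(T'\otimes_{kQ}P_{\hat\infty},P_\infty)=\Hom_{kQ}(T',M)$. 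Your proposal never produces this last identification, which is precisely where ``the rows transform by the reflection on $M$'' becomes a theorem rather than a slogan.

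Two smaller points. First, you have the arrows at $\infty$ pointing the wrong way: for $A=kQ[M]=\sm{kQ&0\\M&k}$ the radical of $P_\infty$ is $M$, so the quiver of $A$ has arrows \emph{from} $\infty$ to $Q_0$ and none into $\infty$; you imported the framing convention of Section~\ref{S:cluster} (arrows $i\to\infty$), which is the opposite. Second, your first, combinatorial approach (matching the DWZ premutation of $(\br Q,W,C)$ at $s$ against a presentation of $kQ'[M']$) would also work and is a genuinely different route, but it is heavier bookkeeping for the same conclusion; you are right to abandon it, and Proposition \ref{D:exact}-adjacent uniqueness (your ``Proposition 2.6(2)'') would indeed let you avoid chasing the potential, as you note.
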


\begin{proof} Let $\infty$ be the extended vertex of $Q[M]$, and $P_{\hat{\infty}}:=A/P_\infty$. Then $$\End_A(P_{\hat{\infty}})=KQ \text{ and } \Hom_A(P_{\hat{\infty}},P_\infty)=M.$$
Since there are no incoming arrows to $\infty$ for $Q[M]$, we have that $P_{\hat{\infty}} \cong KQ$.
Let $T=A/P_s\oplus T_s$ be the BB-tilting module of $A$ at $s$, and $T_{\hat{\infty}}:=T/P_\infty$.
We need to show that
$$\End_A(T_{\hat{\infty}})=K\mu_s(Q) \text{ and } \Hom_A(T_{\hat{\infty}},P_\infty)=\mu_s(M).$$
Let $T'=KQ/P_s\oplus T_s'$ be the BB-tilting module of $KQ$ at $s$. Since $M$ does not contain $S_s$ as a direct summand
the quiver of $Q[M]$ has no arrow from $\infty$ to $s$. So $T_s=T_s'$, and thus $T_{\hat{\infty}}=T'$. Hence, $\End_A(T_{\hat{\infty}})=K\mu_s(Q)$.
For the second one, we consider \begin{align*} \mu_s(M) &= \Hom_Q(T',\Hom_A(P_{\hat{\infty}},P_\infty)),\\
&=\Hom_A(T'\otimes_{KQ} P_{\hat{\infty}},P_\infty),\\
&=\Hom_A(T_{\hat{\infty}},P_\infty).
\end{align*}
\end{proof}

It follows from the previous two lemmas that
\begin{theorem} \label{T:QGrass} $\T(M)$ and $\T(M')$ are related via \eqref{eq:Fmu}.
In particular, if $M$ is {\em polynomial-count}, that is, all its Grassmannians $\Gr^\beta(M)$ are polynomial-count, then so are all reflection equivalent classes of $M$.
\end{theorem}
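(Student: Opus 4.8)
The plan is to deduce everything from the framed wall-crossing formula \eqref{eq:Fmu} of Theorem~\ref{T:FWC}, applied to the extension quiver with potential $(\br{Q},W)$ that presents the global-dimension-two algebra $A=kQ[M]$. The two lemmas immediately preceding the statement already supply the geometric input: the first gives $\T(\br{Q},W)=\T(M)$, and the second identifies the mutated endomorphism algebra $A'=\End_A(T)=J(\br{Q}',W';C')$ with the extension $kQ'[M']$ of $Q'=\mu_s(Q)$ by $M'=\mu_s(M)$. So what remains is to check that Theorem~\ref{T:FWC} applies to the mutation at $s$, to read off the resulting identity, and then to transport the polynomial-count property through it.

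First I would verify the hypotheses. By the remark preceding the statement, the cut $C$ of $\br{Q}$ satisfies the conditions of Corollary~\ref{C:tilting}: it is algebraic (since $A$ has global dimension two) and satisfies \eqref{A:cut} because every arrow of $C$ ends at the framing vertex $\infty\neq s$. Hence $\mu_s(\br{Q},W,C)=(\br{Q}',W',C')$ with $C'$ algebraic and $A$ tilting equivalent to $A'$ via $\Hom_A(T,-)$; that is, the condition $\smiley$ holds for $(\br{Q},W)$ at $s$, so Theorem~\ref{T:FWC} applies. Since $\T(\br{Q},W)=\T(M)$, and --- applying the first preceding lemma to the pair $(Q',M')$, with the second lemma identifying $J(\br{Q}',W';C')=kQ'[M']$ --- also $\T(\br{Q}',W')=\T(M')$, the equation \eqref{eq:Fmu} reads
\[(\mb{E}_s')^{-1}\Phi_s(\T(M))\,\mb{E}_s'=\T(M')=\Phi_s^\vee\big(\mb{E}_s\,\T(M)\,\mb{E}_s^{-1}\big),\]
which is the first assertion.

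For the polynomial-count statement, suppose $M$ is polynomial-count. Then every coefficient $q^{-\frac12\innerprod{\br{M}-\beta,\beta}}|\Gr^\beta(M)|$ of $\T(M)$ lies in $\mb{Z}[q^{\pm 1/2}]$, and it is enough to observe that the wall-crossing operator in \eqref{eq:Fmu} preserves the class of series with coefficients in $\mb{Z}[q^{\pm 1/2}]$. The monomial substitutions $\Phi_s,\Phi_s^\vee$ obviously do, and conjugation by $\mb{E}_s=\mb{E}(x_s)$ sends a monomial $x^\gamma$ to $x^\gamma\,\mb{E}(q^{c}x_s)\mb{E}(x_s)^{-1}$ for an integer $c=c(\gamma)$ coming from the commutation relation; here $\mb{E}(q^{c}x_s)\mb{E}(x_s)^{-1}$ has coefficients in $\mb{Z}[q^{\pm 1/2}]$ by the identity $\mb{E}(x_s)^{-1}\mb{E}(q^{b}x_s)=\prod_{l=1}^{b}(1+q^{l-\frac12}x_s)$ recalled in Section~\ref{S:cluster} (for $c<0$ one takes the reciprocal of this polynomial, whose expansion still has coefficients in $\mb{Z}[q^{\pm 1/2}]$). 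Thus $\T(M')$ has coefficients in $\mb{Z}[q^{\pm 1/2}]$, so $|\Gr^\beta(M')|$ equals, up to a unit of $\mb{Z}[q^{\pm 1/2}]$, a Laurent polynomial in $q^{1/2}$; being a non-negative integer for every prime power $q$, it can have neither negative nor half-integer powers, hence is an honest counting polynomial in $q$, so $\Gr^\beta(M')$ is polynomial-count. Since $\mu_s$ is an involution the argument is symmetric in $M$ and $M'$, and iterating along reflection sequences gives the claim for all reflection equivalent classes of $M$.

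As for difficulty: essentially all of the structural work has been offloaded to the two preceding lemmas (the identity $\T(\br{Q},W)=\T(M)$ and the compatibility $\End_A(T)=kQ'[M']$ of the reflection functor with the mutation of Derksen--Weyman--Zelevinsky), so the only genuinely delicate step is the coefficient bookkeeping in the last paragraph --- keeping $\mb{Z}[q^{\pm 1/2}]$ under control through conjugation by the non-polynomial series $\mb{E}_s$ (the $c<0$ branch in particular) and the final descent from a Laurent polynomial in $q^{1/2}$ back to a counting polynomial in $q$.
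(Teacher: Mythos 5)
Your proof is correct and follows the route the paper intends: the first identity is precisely the combination of the two preceding lemmas (identifying $\T(\br{Q},W)=\T(M)$ and $\End_A(T)=kQ'[M']$) with Theorem~\ref{T:FWC}, whose hypothesis $\smiley$ is supplied by Corollary~\ref{C:tilting} exactly as you check, and the polynomial-count assertion is the natural consequence. The paper gives no written proof of this theorem, but your reconstruction — including the $\mb{Z}[q^{\pm 1/2}]$ bookkeeping through conjugation by $\mb{E}_s$ and the integrality argument ruling out half-integer and negative powers — is the correct filling-in of the details.
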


\begin{example} \label{ex:333}
Consider the following quiver
$$\vcenter{\xymatrix{
	&  2 \ar[dr]^{b_1,b_2,b_3}   	\\
1 \ar[ur]^{a_1,a_2,a_3}  & &  3 \ar[ll]^{c_1,c_2,c_3} 	 }}$$
with potential $W=\sum_{I:=(i,j,k)\in S_3} (-1)^{\sgn I}a_ib_jc_k$ and an algebraic cut $C=\{c_1,c_2,c_3\}$.
Then the algebra $J(Q,W;C)$ is Beilinson's quiver algebra for $\mb{P}^2$.
This algebra is extended from the quiver $2 \xrightarrow{b_1,b_2,b_3} 3$ by a representation $M$ of dimension $(3,6)$ (see \cite[Example 7.8]{Fc2}).
To compute $\T(M)$, we can first compute $\T(\mu_3(M))$,
where $\mu_3(M)$ can be presented by the following base diagram
$$\Kthreethreethree{b_1}{b_3}{b_2}{b_1}{b_2}{b_3}$$
The black (resp. white) dots are a basis at vertex 3 (resp. vertex 2); The letter on an arrow represents the identity map on the arrow of the same letter.
$$\T(\mu_3(M))=1+x^{(1,0,3)}+x^{(1,3,3)}+[3](x^{(1,0,1)}+x^{(1,0,2)}+x^{(1,1,3)}+x^{(1,2,3)})+3q^{\frac{1}{2}}x^{(1,1,2)}.$$
Here $[n]$ is the quantum number $q^{\frac{1-n}{2}}\big(\frac{q^{n}-1}{q-1}\big)$.
Using Theorem \ref{T:QGrass}, we find that
\begin{align*}\T(M)&=1+x^{(1,3,0)}+x^{(1,3,6)}+[3](x^{(1,1,0)}+x^{(1,2,0)}+x^{(1,2,3)})+3q^{\frac{1}{2}}x^{(1,1,1)}\\
&+[3][3](x^{(1,2,1)}+x^{(1,2,2)})+(q^{\frac{3}{2}}+q^{-\frac{3}{2}})[3](x^{(1,3,1)}+x^{(1,3,5)})\\
&+(q-1+q^{-1})[3][5](x^{(1,3,2)}+x^{(1,3,4)})+(q-1+q^{-1})[4][5]x^{(1,3,3)}.\end{align*}
Employing the methods developed in \cite{Fc2}, we can compute $|\varphi_\omega(\Mod_\alpha^\nu(Q))|$ and $|\Mod_\alpha^\nu(J(Q,W;C))|$ for all $\alpha$ with $\alpha_1=1$ and generic $\nu$.
\end{example}

\section{Appendix: Proof of Theorem \ref{T:tilting}} \label{S:app}
Theorem \ref{T:tilting} generalizes the main result of \cite{M} from APR-tilting modules to BB-tilting modules (see after Lemma \ref{L:BBT}). We slightly simplify its proof as well.
We follow the matrix notation ${_r(-)_c}$ in \cite{M}, that is, we write the row index $r$ and column index $c$ as left and right subscripts respectively.  

\begin{lemma}[{\cite[Proposition 3.3]{BIRS}}] \label{L:presentation} Let $Q$ be a finite quiver and $A$ be a finite dimensional basic algebra. Let $R$ be a set of relations in $Q$, and we assume that any $r\in R$ is a formal linear sum of paths in $Q$ with a common start $tr$ and a common end $hr$.
Then $A$ can be presented as $\widehat{KQ}/\innerprod{R}$ if and only if
there is an algebra homomorphism $\pi:\widehat{KQ}\to A$ such that the sequence
$$\bigoplus_{tr=i} \pi(e_{hr})A \xrightarrow{{_r\big(\pi(a^{-1}r)\big)_a}} \bigoplus_{ta=i} \pi(e_{ha})A \xrightarrow{_a\big(\pi(a)\big)}  \rad(\pi(e_i)A) \to 0$$
is exact for any $i\in Q_0$.
Here, $a^{-1}$ is the formal inverse of $a$ defined by 
$$a^{-1}(a_1a_2\cdots a_m) = \begin{cases} a_2\cdots a_m & \text{if } a_1=a,\\ 0 & \text{otherwise}.\end{cases}$$
\end{lemma}


We set $\displaystyle P_\inn=\bigoplus_{ha=k} P_{ta}$ and $\displaystyle P_\out=\bigoplus_{tb=k,b\notin C} P_{hb}$. Recall from Lemma \ref{L:BBT} that the summand $T_k:=\tau^{-1} S_k$ in the BB-tilting module can be presented as
\begin{equation} \label{eq:Tk} 0\to P_k\xrightarrow{\alpha} P_\inn\xrightarrow{g} T_k\to 0,
\end{equation}
where $\alpha:=(a)_a$ and $g:={}_a(g_a)$.

Using the presentation $\widehat{KQ_C}/\innerprod{\partial_C W}$ of $J(Q,W;C)$.
We have for $i\neq k$
\begin{equation} \cdots\to P' \to \bigoplus_{hc=i,c\in C} P_{tc} \xrightarrow{\partial_{cb}} \bigoplus_{tb=i} P_{hb} \xrightarrow{\beta} P_i \to S_i\to 0,\end{equation}
where $\beta:={}_b(b)$ and $\partial_{cb}:={}_c(\partial_{c}\partial_b W)_b$.
Since the cut $C$ satisfies Definition \ref{D:exact}.(2), the first three terms are part of the minimal projective resolution of $S_i$. We assume that the projective $P'$ is minimal as well.

This fits in the following commutative diagram
\begin{equation*}\xymatrix@C=9ex@R=5ex{
0 \ar[r]	& \displaystyle c_{ki} P_k \ar[r]^{\oplus \alpha} \ar@{^{(}->}[d]^{\iota} & \displaystyle c_{ki} P_{in} \ar[r]^{\oplus g} \ar[d]^{\partial_{acb}} & \displaystyle c_{ki} T_k \ar[r] \ar[d]^{f:=_c(f_c)} & 0	 \\
P'\ar[r]^{h} & \displaystyle \bigoplus_{hc=i,c\in C} P_{tc} \ar[r]^{\partial_{cb}}  & \displaystyle \bigoplus_{tb=i} P_{hb} \ar[r]^{\beta} & P_i \ar[r] & S_i}
\end{equation*}
Here $c_{ki}=|C\cap Q(k,i)|$, and the first row is a direct sum of $c_{ki}$ copies of \eqref{eq:Tk};
The map $\iota$ is the natural embedding, the map $\partial_{acb}$ is given by the matrix $_{a,c}(\partial_{a}\partial_{c}\partial_{b}W)_b$, and $f$ is induced from $\partial_{acb}$. We then take the mapping cone of the above diagram, and cancel out the last term $c_{ki} P_k$. We end up with
\begin{equation}\label{eq:cone} P'\xrightarrow{h} c_{ki} P_\inn \oplus \bigoplus_{hc=i,tc\neq k}P_{tc} \xrightarrow{g':=\sm{\oplus g & \partial_{acb} \\ 0 & \partial_{cb}}} c_{ki} T_k\oplus \bigoplus_{tb=i} P_{hb} \xrightarrow{f':=\sm{-f\\ \beta}} P_i \to S_i\end{equation}

Recall our setting in Section \ref{S:QPmu}. Let $\wtd{Q}_{\wtd{C}}$ be the quiver obtained from $\wtd{Q}$ by forgetting all arrows in $\wtd{C}$.
To apply Lemma \ref{L:presentation}, we construct an algebra homomorphism $\pi:\widehat{K\wtd{Q}_{\wtd{C}}} \to \End_A(T)$ as follows. For any direct summands $T_i,T_j$ of $T$, we will view $\Hom_A(T_i,T_j)$ under the natural embedding into $\Hom_A(T,T)$. Let $\id_i$ be the identity map in $\Hom_A(T_i,T_i)$. We define
\begin{enumerate} \item $\pi(e_i)= \id_i$,
\item $\pi(a)=a\in\Hom_A(P_i,P_j)$ for $i,j\neq k$,
\item $\pi(a^*)=g_a\in \Hom_A(P_{ta},T_k)$,\\
$\pi(b^*)=-f_c\in \Hom_A(T_k,P_{hc})$ for $b\in C$,\\
$\pi([ab])= ba\in\Hom_A(P_{ha},P_{tb})$ for $b\notin C$.
\end{enumerate}

Recall that $\displaystyle \wtd{W}:=[W]+\sum_{ha=tb=k}b^*a^*[ab],$ and
$\wtd{C}$ contains all
\begin{enumerate} \item[$\bullet$] $c\in C$ if $tc\neq k$,
\item[$\bullet$] arrows $b^*$ if $b\notin C$,
\item[$\bullet$] composite arrows $[ab]$ with $b\in C$.
\end{enumerate}
So the corresponding relations $\partial_{\wtd{C}} \wtd{W}$ are given by
\begin{enumerate}
\item[$\bullet$] $R_0=\{\partial_c [W]\}_{tc\neq k}$,
\item[$\bullet$] $R_1=\{a^*[ab]\}_{b\notin C}$,
\item[$\bullet$] $R_2=\{\partial_{ab}W + b^*a^*\}_{b\in C}$.
\end{enumerate}
We will see that Theorem \ref{T:tilting} is an immediate consequence of the following two lemmas.

\begin{lemma} \label{L:res1} We have the following exact sequence
$$\Hom_A(T,P_\out)\xrightarrow{\circ r_1} \Hom_A(T,P_\inn) \xrightarrow{\circ g} \rad(\Hom_A(T,T_k)) \to 0,$$
where $r_1$ is the matrix $_b\{ba\}_a$.
\end{lemma}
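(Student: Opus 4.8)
The plan is to obtain the displayed three–term sequence by applying $\Hom_A(T,-)$ to the defining short exact sequence \eqref{eq:Tk}, $0\to P_k\xrightarrow{\alpha}P_\inn\xrightarrow{g}T_k\to 0$, and then reading off exactness from standard tilting theory. Throughout I use that $T=kA/P_k\oplus T_k$ is the BB–tilting module (so $\alpha$ is injective, $\operatorname{pd}_A T\le 1$ and $\Ext^1_A(T,T)=0$), that $P_\inn$ and $P_\out$ are direct summands of $kA/P_k$, hence of $T$, and that $T_k$ is a summand of $T$. Consequently $\Hom_A(T,P_\inn)$ and $\Hom_A(T,P_\out)$ are projective $B$–modules ($B=\End_A(T)$) and $\Hom_A(T,T_k)$ is the indecomposable projective $B$–module at the vertex $k$, with simple top $S_k'$; proving the sequence exact therefore identifies it with the segment of the minimal projective presentation of $S_k'$ over $B$ coming from the arrows $a^*$ and the relations $R_1$ at $k$.

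For surjectivity onto $\rad\Hom_A(T,T_k)$, I would apply $\Hom_A(T,-)$ to \eqref{eq:Tk}; since $\Ext^1_A(T,P_\inn)=0$ (rigidity, as $P_\inn\in\operatorname{add}T$) and $\Ext^{\ge 2}_A(T,-)=0$, this yields the exact sequence
\[
0\to\Hom_A(T,P_k)\xrightarrow{\,\circ\alpha\,}\Hom_A(T,P_\inn)\xrightarrow{\,\circ g\,}\Hom_A(T,T_k)\longrightarrow\Ext^1_A(T,P_k)\to 0,
\]
so $\operatorname{coker}(\circ g)\cong\Ext^1_A(T,P_k)$. The point is then to show this cokernel is one–dimensional: if so, $\operatorname{im}(\circ g)$ is a maximal submodule of the local module $\Hom_A(T,T_k)$, hence equals its radical. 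For the computation I would observe that, for the tilting torsion pair, $\rad P_k$ lies in the torsion class $\operatorname{Gen}T$ (it is generated by the images of the arrows out of $k$, each factoring through a $P_{hb}\in\operatorname{add}T$) while $P_k/\rad P_k=S_k$ is torsion–free ($\Hom_A(T,S_k)=0$ follows directly from \eqref{eq:Tk}, since $\Hom_A(T_k,S_k)\hookrightarrow\Hom_A(P_\inn,S_k)=0$); hence $\rad P_k$ is the torsion part of $P_k$, $\Ext^1_A(T,\rad P_k)=0$, and $\Ext^1_A(T,P_k)\cong\Ext^1_A(T,S_k)=\Ext^1_A(T_k,S_k)$. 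Applying $\Hom_A(-,S_k)$ to \eqref{eq:Tk} and using $\Hom_A(P_\inn,S_k)=0$ then shows $\Ext^1_A(T_k,S_k)\cong\Hom_A(P_k,S_k)=k$.

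For exactness at $\Hom_A(T,P_\inn)$, I would first unwind conventions to see that $r_1=\alpha\circ\beta_\out$, where $\beta_\out\colon P_\out\to P_k$ is the map whose components are the arrows $b\notin C$ out of $k$ (this matches $r_1={}_b\{ba\}_a$ with $\alpha=(a)_a$ from \eqref{eq:Tk}). Then $g\circ r_1=(g\alpha)\circ\beta_\out=0$, so $\operatorname{im}(\circ r_1)\subseteq\ker(\circ g)=\operatorname{im}(\circ\alpha)=\alpha\circ\Hom_A(T,P_k)$. Since $\alpha$ is monic and $\operatorname{im}(\circ r_1)=\alpha\circ\bigl(\beta_\out\circ\Hom_A(T,P_\out)\bigr)$, the reverse inclusion reduces to surjectivity of $\circ\beta_\out\colon\Hom_A(T,P_\out)\to\Hom_A(T,P_k)$. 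Here assumption \eqref{A:cut} enters: as no arrow of the cut ends at $k$, the presentation $A=\widehat{kQ_C}/\innerprod{\partial_C W}$ has no relation with source $k$, so $\rad P_k$ is projective, $\beta_\out$ is injective with image $\rad P_k$, and $0\to P_\out\xrightarrow{\beta_\out}P_k\to S_k\to 0$ is a projective resolution (thus $\operatorname{pd}_A S_k\le 1$). Applying $\Hom_A(T,-)$ to the isomorphism $P_\out\xrightarrow{\sim}\rad P_k$ and to $0\to\rad P_k\to P_k\to S_k\to 0$ (recalling $\Hom_A(T,S_k)=0$) then produces the surjection $\Hom_A(T,P_\out)\twoheadrightarrow\Hom_A(T,\rad P_k)\twoheadrightarrow\Hom_A(T,P_k)$, which is exactly $\circ\beta_\out$.

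The main obstacle I anticipate is the combination of (i) the bookkeeping that makes the differential attached to the relations $R_1$ literally equal to $\circ r_1=\circ(\alpha\beta_\out)$, and (ii) the essential use of \eqref{A:cut} to force $\operatorname{pd}_A S_k\le 1$: if $\beta_\out$ acquired a nonzero kernel $\Omega^2 S_k$, that module need not lie in $\operatorname{Gen}T$, and surjectivity of $\circ\beta_\out$ — hence the whole middle exactness — would break. The remaining input, one–dimensionality of $\Ext^1_A(T,P_k)$, is a short syzygy computation over $A$; everything else is formal homological algebra in the tilting torsion pair.
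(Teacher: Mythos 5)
Your argument follows essentially the same route as the paper's: apply $\Hom_A(T,-)$ to \eqref{eq:Tk}, observe $\Ext^1_A(T,P_\inn)=0$, show $\Ext^1_A(T,P_k)$ is one-dimensional, and deduce $\Img(\circ r_1)=\Img(\circ\alpha)$ from the surjectivity of $\Hom_A(T,P_\out)\to\Hom_A(T,P_k)$, which is governed by $\Hom_A(T,S_k)=0$ together with assumption \eqref{A:cut}. The only local difference is that you obtain $\dim\Ext^1_A(T,P_k)=1$ via the torsion-pair splitting of $P_k$ and $\Ext^1_A(T_k,S_k)\cong\Hom_A(P_k,S_k)$, whereas the paper computes the cokernel of $\Hom_A(P_\inn,P_k)\to\Hom_A(P_k,P_k)$ directly; both are sound.
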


\begin{proof} We apply $\Hom_A(T,-)$ to the exact sequence \eqref{eq:Tk}, and get
$$0\to \Hom_A(T,P_k) \xrightarrow{\circ\alpha} \Hom_A(T,P_\inn)\xrightarrow{\circ g} \Hom_A(T,T_k) \to \Ext_A^1(T,P_k)\to \Ext_A^1(T,P_\inn).$$
The last term $\Ext_A^1(T,P_\inn)$ vanishes because the first map below is surjective
$$\Hom_A(P_\inn,P_\inn)\xrightarrow{} \Hom_A(P_k,P_\inn)\xrightarrow{} \Ext_A^1(T_k,P_\inn)\to 0.$$
Next, $\Ext_A^1(T,P_k)$ is one-dimensional because of the following exact sequence
$$\Hom_A(P_\inn,P_k)\xrightarrow{} \Hom_A(P_k,P_k)\to \Ext_A^1(T_k,P_k)\to 0.$$
Finally, we claim the image of $\circ r_1$ is exactly the image of $\circ \alpha$.
By the definition of $r_1$, it suffices to show that $\Hom_A(T,P_\out)\xrightarrow{\circ \beta}\Hom_A(T,P_k)$ is surjective.
But the cokernel of $\circ \beta$ is $\Hom_A(T,S_k)=0$.
\end{proof}

Applying $\Hom_A(T,-)$ to \eqref{eq:cone}, we get the complex
$$\Hom_A(T,c_{ki} P_\inn \oplus \bigoplus_{hc=i,tc\neq k}P_{tc}) \xrightarrow{\circ g'} \Hom_A(T,c_{ki} T_k\oplus \bigoplus_{tb=i} P_{hb}) \xrightarrow{\circ f'} \Hom_A(T,P_i) \to \Hom_A(T,S_i)$$

\begin{lemma} \label{L:res2} If $\Ext_A^3(S_i,S_k)=0$ for any $i\neq k$, this complex is exact and induces
$$\Hom_A(T,c_{ki} P_\inn \oplus \bigoplus_{hc=i,tc\neq k}P_{tc}) \xrightarrow{} \Hom_A(T,c_{ki} T_k\oplus \bigoplus_{tb=i} P_{hb}) \xrightarrow{} \rad(\Hom_A(T,P_i)).$$
\end{lemma}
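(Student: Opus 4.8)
The plan is to obtain the claimed exact sequence by applying the exact functor $\Hom_A(T,-)$ to the four-term complex \eqref{eq:cone} and bookkeeping the higher $\Ext$-terms that appear. First I would recall that $T$ is a tilting module of projective dimension $1$, so $\Ext_A^{\geq 2}(T,-)=0$; this is the structural fact that makes all the error terms manageable. Applying $\Hom_A(T,-)$ to the two short exact sequences obtained by splitting \eqref{eq:cone} at its middle term $C_1:=c_{ki}T_k\oplus\bigoplus_{tb=i}P_{hb}$ — namely $0\to K\to C_1\xrightarrow{f'} P_i\to S_i\to 0$ broken as $0\to K\to C_1\to \Img f'\to 0$ and $0\to\Img f'\to P_i\to S_i\to 0$, together with $P'\xrightarrow{h}C_2\xrightarrow{g'}K\to 0$ where $C_2:=c_{ki}P_\inn\oplus\bigoplus_{hc=i,tc\neq k}P_{tc}$ — I get long exact sequences connecting $\Hom_A(T,C_2)$, $\Hom_A(T,C_1)$, $\Hom_A(T,P_i)$, $\Hom_A(T,S_i)$ with the terms $\Ext_A^1(T,K)$, $\Ext_A^1(T,\Img f')$, $\Ext_A^1(T,P_i)$.

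The key computations are then the vanishing of these $\Ext^1$'s. For $\Ext_A^1(T,P_i)$ with $i\neq k$: since $T=kA/P_k\oplus T_k$, it suffices to show $\Ext_A^1(T_k,P_i)=0$, which follows from $\Ext_A^1(T_k,P_i)$ sitting in a sequence $\Hom_A(P_\inn,P_i)\to\Hom_A(P_k,P_i)\to\Ext_A^1(T_k,P_i)\to 0$ together with the surjectivity coming from the fact that any map $P_k\to P_i$ ($i\neq k$) factors through $\rad P_k$ hence through $P_\inn$ — exactly parallel to the argument in Lemma \ref{L:res1}, and here the hypothesis $\Ext_A^3(S_i,S_k)=0$ in Theorem \ref{T:tilting} is what guarantees $P'$ in the minimal resolution of $S_i$ has no $P_k$-summand, so that after applying $\Hom_A(T,-)$ the term $\Hom_A(T,P')$ does not contribute spurious cohomology. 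Next, $\Ext_A^1(T,K)=0$: by Lemma \ref{L:res1} the sequence $\Hom_A(T,P_\out)\xrightarrow{\circ r_1}\Hom_A(T,P_\inn)\xrightarrow{\circ g}\rad\Hom_A(T,T_k)\to 0$ identifies the image of the $T_k$-columns of $g'$, and combining this with the standard projective-resolution terms for $S_i$ over $J(Q,W;C)$ (the part $\bigoplus_{hc=i,c\in C}P_{tc}\xrightarrow{\partial_{cb}}\bigoplus_{tb=i}P_{hb}$, which is exact at the middle by Definition \ref{D:exact}) shows the complex $\Hom_A(T,C_2)\xrightarrow{\circ g'}\Hom_A(T,C_1)\xrightarrow{\circ f'}\Hom_A(T,P_i)$ is exact at $\Hom_A(T,C_1)$. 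Exactness at $\Hom_A(T,P_i)$ amounts to identifying $\Img(\circ f')$ with $\rad\Hom_A(T,P_i)$, which follows because $\Coker(\circ f')=\Hom_A(T,S_i)$ is one-dimensional at vertex $i$ (as $\Hom_A(T_i,S_i)=k$ and $\Hom_A(T_j,S_i)=0$ for $j\neq i$), so the image is precisely the radical.

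Assembling, the four-term complex $\Hom_A(T,C_2)\to\Hom_A(T,C_1)\to\Hom_A(T,P_i)\to\Hom_A(T,S_i)$ is exact, and factoring out the one-dimensional cokernel at the right gives the three-term exact sequence ending in $\rad\Hom_A(T,P_i)$ claimed in the lemma. The main obstacle I anticipate is the careful handling of the $T_k$-blocks in $g'$ and $f'$: the matrix entries $\partial_{acb}$, $f=\,_c(f_c)$, and $g=\,_a(g_a)$ must be tracked through $\Hom_A(T,-)$ so that the identification in Lemma \ref{L:res1} splices correctly onto the cut-relation resolution of $S_i$; in particular one must verify that the mapping-cone construction did not destroy minimality, i.e. that after cancelling the $c_{ki}P_k$ summand the resulting complex \eqref{eq:cone} still has its image in the radical after applying $\Hom_A(T,-)$. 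This is where the hypotheses $\Ext_A^3(S_i,S_k)=0$ (controlling $P'$) and the algebraicity of $C$ (Definition \ref{D:exact}, controlling the middle of the $S_i$-resolution) both get used, and checking their interaction is the delicate point rather than any single calculation.
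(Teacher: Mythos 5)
Your overall strategy — apply $\Hom_A(T,-)$, split \eqref{eq:cone} into short exact sequences, and kill the error terms by $\Ext^1$-vanishings coming from $\operatorname{pd} T\leq 1$ and the absence of $P_k$-summands — is the same as the paper's, and the exactness at $\Hom_A(T,C_1)$ and at $\Hom_A(T,P_i)$ is argued along essentially the right lines. But there is a genuine error in the last step, the identification of the image of $\circ f'$ with the radical.

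You claim $\Coker(\circ f')=\Hom_A(T,S_i)$ and that this is one-dimensional ``as $\Hom_A(T_i,S_i)=k$ and $\Hom_A(T_j,S_i)=0$ for $j\neq i$.'' The parenthetical is false for $j=k$: the summand $T_k=\tau^{-1}S_k$ is not projective, and by Auslander--Reiten duality $\Hom_A(T_k,S_i)\cong\Ext_A^1(S_i,S_k)^*$, which is nonzero precisely when there are arrows $i\to k$ in the Gabriel quiver of $A$ — exactly the arrows allowed (indeed forced to remain) by assumption \eqref{A:cut}. So $\Hom_A(T,S_i)$ has dimension $1+\dim\Ext_A^1(S_i,S_k)$, not $1$. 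Relatedly, even after proving exactness at $\Hom_A(T,P_i)$, the cokernel of $\circ f'$ is not all of $\Hom_A(T,S_i)$ but only the image of $\Hom_A(T,P_i)$ inside it, i.e., $\Ker\bigl(\Hom_A(T,S_i)\to\Ext_A^1(T,\Omega S_i)\bigr)$. The paper closes this gap with a second AR-duality computation showing $\Ext_A^1(T,\Omega S_i)=\Ext_A^1(\tau^{-1}S_k,\Omega S_i)\cong\Hom_A(\Omega S_i,S_k)^*\cong\Ext_A^1(S_i,S_k)^*$, so the two $\Ext_A^1(S_i,S_k)^*$-contributions cancel and what remains is exactly one dimension. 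Without this cancellation argument the lemma does not follow — and it is here, not in the vanishing of $\Ext_A^1(T,P_i)$, that the delicate bookkeeping actually lives. (As a smaller point, the hypothesis $\Ext_A^3(S_i,S_k)=0$ is used to ensure $P'$ has no $P_k$-summand, hence $\Ext_A^1(T,P')=0$ and $\Ext_A^1(T,\operatorname{Im}h)=0$; it plays no role in showing $\Ext_A^1(T,P_i)=0$ for $i\neq k$, which follows from $\alpha$ being a left $\operatorname{add}(kA/P_k)$-approximation.)
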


\begin{proof} We first show that the complex is exact at $\Hom_A(T,c_{ki} T_k\oplus \bigoplus_{tb=i} P_{hb})$. We apply $\Hom_A(T,-)$ to the exact sequence
$$0\to \Img h \to c_{ki} P_\inn \oplus \bigoplus_{hc=i}P_{tc} \to \Img g' \to 0,$$
and get
$$\Hom_A(T,c_{ki} P_\inn \oplus \bigoplus_{hc=i,tc\neq k}P_{tc})\to \Hom_A(T,\Img g')\to \Ext_A^1(T,\Img h).$$
If $\varphi\in\Hom_A(T,c_{ki} T_k\oplus \bigoplus_{tb=i} P_{hb})$ such that $\varphi f'=0$, then $\varphi(T)\subseteq \Img g'$.
So it suffices to show that $\Ext_A^1(T,\Img h)=0$.
The condition $\Ext_A^3(S_i,S_k)=0$ implies that $P'$ has no $P_k$ as its summands. So $\Ext_A^1(T,P')=0$, and thus $$0\to \Ext_A^1(T,\Img h)\to \Ext_A^2(T,\Ker h)=0.$$
Since $\displaystyle c_{ki} P_\inn \oplus \bigoplus_{hc=i,tc\neq k}P_{tc}$ has no $P_k$ as its direct summands, for the same reason the complex is exact at $\Hom_A(T,P_i)$.

We remain to show that the cokernel of $\circ f'$ is one-dimensional.
Let $\Omega S_i$ be the first syzygy of $S_i$. We apply $\Hom_A(T,-)$ to
$$0\to \Omega S_i\xrightarrow{f''} P_i\to S_i\to 0,$$
and obtain
$$\Hom_A(T,\Omega S_i) \xrightarrow{\circ f''} \Hom_A(T,P_i) \to \Hom_A(T,S_i)\to \Ext_A^1(T,\Omega S_i)\to \Ext_A^1(T,P_i)=0.$$
Since $\displaystyle \Ext_A^1(T,c_{ki} P_\inn \oplus \bigoplus_{hc=i,tc\neq k}P_{tc})$ vanishes, the cokernel of $\circ f'$ is the same as that of $\circ f''$.
By applying $\Hom_A(S_i,-)$ to \eqref{eq:Tk}, we see that \begin{equation}
\label{eq:hom} \Hom_A(T,S_i)=\Hom_A(T_k,S_i)\oplus K\cong\Ext_A^1(S_i,S_k)^* \oplus K.
\end{equation}
In the meanwhile,
\begin{align*}
\Ext_A^1(T,\Omega S_i)=\Ext_A^1(\tau^{-1}S_k,\Omega S_i)=\br{\Hom}_A(\Omega S_i,S_k)^*=\Hom_A(\Omega S_i,S_k)^*,\\
 0=\Hom_A(P_i,S_k)\to \Hom_A(\Omega S_i,S_k)\to\Ext_A^1(S_i,S_k)\to \Ext_A^1(P_i,S_k)=0.
 \end{align*}
So
$$\Ext_A^1(T,\Omega S_i)=\Ext_A^1(S_i,S_k)^*.$$
Together with \eqref{eq:hom}, we conclude that the cokernel of $\circ f'$ is $K$.
\end{proof}

\begin{proof}[Proof of Theorem \ref{T:tilting}] We need to show that the endomorphism algebra $\End_{A}(T)$ of the BB-tilting module $T$ is isomorphic to $J(\wtd{Q},\wtd{W};\wtd{C})$.
Recall that the BB-tilting module $T$ is obtained from $\bigoplus_{i\in Q_0} P_i$ by just replacing $P_k$ with $T_k$.
So according to Lemma \ref{L:presentation}, it suffice to check that \begin{enumerate}
\item The map $g$ in Lemma 8.2 and $f'$ in Lemma 8.3 agrees with the map $\pi$;
\item The map $r_1$ in Lemma 8.2 and $g'$ in Lemma 8.3 agrees with the desired relations $R_0,R_1$ and $R_2$ (defined before Lemma \ref{L:res1}).
\end{enumerate}
(1) is clear from the definition of $g,f'$, and $\pi$. 
For (2), we observe that \begin{enumerate}
\item[$\bullet$] The map $r_1$ agrees with $\pi({a^*}^{-1}r)$ for $r \in R_1$ and $\pi(a^*)=g_a$;
\item[$\bullet$] The component map $\partial_{cb}$ in $g'$ agrees with $\pi({b}^{-1}r)$ for $r \in R_0$ and $\pi(b)=b$;
\item[$\bullet$] Similarly, for $b\in C$ the component map $\partial_{acb}$ (resp. $g$) in $g'$ is responsible for the summand $\partial_{ab}W$ (resp. $b^*a^*$) in $R_2$.
\end{enumerate}
\end{proof}

Finally, we prove Corollary \ref{C:tilting}.
\begin{proof}[Proof of Corollary \ref{C:tilting}] Since the cut satisfies \eqref{A:cut}, there is no relation starting from $S_k$. So $S_k$ has projective dimension one, and we have that $0\to P_\out\to P_k\to S_k\to 0$. Hence $\Hom_A(T,P_\out)=\Hom_A(T,P_k)$, and the map $\circ r_1$ in Lemma \ref{L:res1} is in fact injective.
Now $J(Q,W;C)$ has global dimension 2, so $P'$ in \eqref{eq:cone} is zero, and thus the map $\circ g'$ of Lemma \ref{L:res2} is injective.
We conclude that $J(\wtd{Q},\wtd{W};\wtd{C})$ has global dimension 2 as well. The two resolutions of Lemma \ref{L:res1} and \ref{L:res2} also imply that $\{\partial_{c}\wtd{W}\}_{c\in\wtd{C}}$ is a minimal set of generators in $\innerprod{\partial_{\wtd{C}}\wtd{W}}$.
\end{proof}

\section*{Acknowledgement}
The author thanks Mathematical Science Research Institute in Berkeley (MSRI) for its hospitality and support during the research
program Cluster Algebras of Fall 2012 when most of results are obtained. 
He also wants to thank Professor Bernhard Keller for his encouragement.
Finally he thanks the anonymous referee for the careful review and helpful comments.

\bibliographystyle{amsplain}

\end{document}